\newtheorem{thm}{Theorem}[section]
\newtheorem{lem}[thm]{Lemma}
\newtheorem{cor}[thm]{Corollary}
\newtheorem{prop}[thm]{Proposition}
\theoremstyle{definition}
\newtheorem{defn}[thm]{Definition}
\newtheorem{conjecture}[thm]{Conjecture}
\newtheorem{rem}[thm]{Remark}
\DeclareMathOperator{\area}{\mathsf{area}}
\DeclareMathOperator{\link}{\mathsf{link}}
\DeclareMathOperator{\aristae}{ \mathsf{Sides} }
\DeclareMathOperator{\edges}{ \mathsf{Edges} }
\DeclareMathOperator{\corners}{\mathsf{Corners}}
\DeclareMathOperator{\curvature}{\kappa}
\DeclareMathOperator{\standardcurvature}{\mathsf{Curvature}}
\DeclareMathOperator{\zero}{\mathsf{Zero}}  
\DeclareMathOperator{\bnd}{\mathsf{Bnd}}  
\DeclareMathOperator{\nega}{\mathsf{Neg}} 
\DeclareMathOperator{\isolated}{\mathsf{Isd}}  
\DeclareMathOperator{\positivecells}{\mathsf{Pos}}
\DeclareMathOperator{\negativebound}{\mathsf{N}}  
\DeclareMathOperator{\positivebound}{\mathsf{P}}    
\newcommand{\identity}{\ensuremath{\mathbf{1}}}
\newcommand{\C}{\ensuremath{\mathbb{C}}}
\newcommand{\R}{\ensuremath{\mathbb{R}}}
\newcommand{\mangle}{\measuredangle}
\newcommand{\nclose}[1]{\ensuremath{\langle\!\langle#1\rangle\!\rangle}}
\newcommand{\size}[1]{\ensuremath{\vert #1 \vert}}
\begin{document}

\title[Coherence of Complexes of Groups]{Coherence and Negative Sectional Curvature in Complexes of Groups}
\author[E.Mart\'inez-Pedroza]{Eduardo Mart\'inez-Pedroza}
      \address{Memorial University\\
St. John's, Newfoundland, Canada A1C 5S7}
      \email{emartinezped@mun.ca}

\author[D.~T.~Wise]{Daniel T. Wise}
      \address{
                     McGill University \\
               Montreal, Quebec, Canada H3A 2K6 }
      \email{wise@math.mcgill.ca}
\subjclass[2000]{}
\keywords{Local quasiconvexity, Coherence, Small-cancellation, Relative hyperbolic groups, quasiconvex subgroups}
\date{\today}

\begin{abstract}
We examine  a condition on a simply connected $2$-complex $X$ ensuring that groups acting properly on $X$ are coherent. This extends earlier work on 2-complexes with negative sectional curvature~\cite{Wi04} which covers the case that $G$ acts freely. Our extension of these results involves a generalization of the notion of  sectional curvature, an extension of the combinatorial Gauss-Bonnet theorem to complexes of groups, and surprisingly requires the use of $\ell^2$-Betti numbers.  We also prove local quasiconvexity of $G$ under the additional assumption that $X$ is $CAT(0)$ space.
\end{abstract}

\maketitle

\section{Introduction}

A group $G$ is \emph{coherent} if finitely generated subgroups are finitely presented.  A group $G$ is \emph{locally quasiconvex} if each finitely generated subgroup is quasiconvex. A subgroup $H$ of  $G$ is \emph{quasiconvex} if there is a constant $L$, such that every geodesic in the Cayley graph of $G$  that joins two elements of $H$ lies in an $L$-neighborhood of $H$. While $L$ depends upon the choice of Cayley graph, it is well-known that the quasiconvexity of $H$ is independent of the finite generating set when $G$ is hyperbolic. As quasiconvex subgroups are finitely presented, it is clear local quasiconvexity implies coherence.

The class of coherent groups include fundamental groups of compact $3$-manifolds by a result of Scott~\cite{Sc73}, mapping tori of free group automorphisms by work of Feighn and Handel~\cite{FH99}, and one-relator groups with sufficient torsion by McCammond and Wise~\cite{McWi-coherence}.  In contrast, the class of locally quasiconvex groups is substantially smaller. It includes  fundamental groups of infinite volume hyperbolic $3$-manifolds by a result of Thurston (\cite[Prop 7.1]{morgan:uniform} or~\cite[Thm 3.11]{mt:hmkg}),  and there are criteria for local quasiconvexity for certain classes of small cancellation groups~\cite{MaWi10, McWi-coherence}.  

Criteria for proving coherence and local quasiconvexity of groups acting freely on simply connected $2$-complexes was introduced in~\cite{Wi04} based on a notion of combinatorial sectional curvature. These methods do not apply on groups with torsion unless they are known to be virtually torsion free.  It is an  open question whether negatively curved groups are virtually torsion free~\cite{GrHypGps}.

In this paper we revisit the of combinatorial sectional curvature. We provide criteria for coherence and local quasiconvexity of groups acting properly and cocompactly on simply connected $2$-complexes. This extends the methods in~\cite{Wi04} to groups with torsion.   Our extension of these results involves a generalization of the notion of  sectional curvature, an extension of the combinatorial Gauss-Bonnet theorem to complexes of groups,  and surprisingly requires the use of $\ell^2$-Betti numbers. 

We revisit the following notion of sectional curvature in Section~\ref{sec:curvature}.

\begin{defn}[Sectional Curvature $\leq \alpha$]
An {\em angled $2$-complex} $X$ is a combinatorial $2$-complex with an assignment of a real number to each corner of each $2$-cell of $X$. 
A locally finite  angled 2-complex $X$  has {\em sectional curvature at most $\alpha$} if the following two conditions hold:
\begin{enumerate}
\item  for each 0-cell $x$ and each finite subgraph $\Delta$ of $\link (x)$ containing a cycle but no  valence one vertex, we have $\standardcurvature (\Delta) \leq \alpha$ where
\[\standardcurvature (\Delta) = 2\pi -\pi \cdot \chi (\Delta) - \sum_{e\in \edges(\Delta)} \mangle (e),\]
and $\mangle (e)$ is the angle assigned to the corner $e \in \edges(\Delta)$; each edge of the link of a $0$-cell $x\in X$ corresponds to a corner of a $2$-cell whose attaching map contains $x$.

\item  for each 2-cell $f$ of $X$, we have $\standardcurvature (f) \leq 0$, where
\[ \standardcurvature (f) =   \Biggl ( \sum_{c \in \corners(f)} \mangle (c) \Biggr)  -   \pi \Bigl(  |\partial f| - 2   \Bigr) ,\]
where $\corners(f)$ denotes the set of corners of the $2$-cell $f$. 
\end{enumerate}
\end{defn}

\begin{defn}[Angled $G$-complex]
Let $G$ be a group. A complex $X$  equipped with a cellular $G$-action without inversions is a {\em $G$-complex}. A $G$-complex $X$ is {\em proper} (respectively,  {\em cocompact}) if the $G$-action is proper (respectively, cocompact). An {\em angled $G$-complex} is a $2$-dimensional $G$-complex equipped with a $G$-equivariant angle assignment. A $G$-complex is trivial if it is empty or a single point.
\end{defn}

\begin{thm}[Cocompact core] \label{thm:core}
Let $X$ be a simply-connected, proper, and cocompact angled $G$-complex with negative sectional curvature. If $H$ is a subgroup of $G$ and $Y \subseteq X$ is a connected $H$-cocompact subcomplex of $X$, then there is a simply-connected $H$-cocompact subcomplex $Z$ such that $Y \subseteq Z \subseteq X$.
\end{thm}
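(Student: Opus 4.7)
My plan is to enlarge $Y$ to a simply-connected subcomplex $Z\subseteq X$ by adjoining the images of reduced disk diagrams that fill a finite, $H$-invariant set of loops generating $\pi_1(Y)$. The role of the negative sectional curvature hypothesis will be to bound uniformly the size of these fillings, so that only finitely many new $H$-orbits of cells are added and the resulting $Z$ remains $H$-cocompact.

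The first ingredient is a linear isoperimetric inequality for $X$. Applying the combinatorial Gauss-Bonnet formula to a reduced disk diagram $D\to X$: each interior $0$-cell $v$ contributes $\standardcurvature(\link_D(v))\leq \alpha<0$ by the sectional curvature hypothesis; each $2$-cell contributes $\leq 0$ by the second clause; and the boundary contributions are bounded linearly in $|\partial D|$. Since the total equals $2\pi$, the number of interior $0$-cells of $D$ is at most $C\cdot|\partial D|$, and local finiteness of $X$ (coming from properness and cocompactness of the ambient $G$-action) then upgrades this to a linear bound on the area and diameter of $\mathrm{Im}(D)\subseteq X$.

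The second ingredient is a finite choice of generators. Because $Y$ is connected and $H$-cocompact with $H$ proper, $Y/H$ is a compact complex of groups, and $\pi_1(Y)$ is generated, as an $H$-invariant subgroup, by finitely many loops $\gamma_1,\ldots,\gamma_n$ of uniformly bounded combinatorial length, extracted from a compact fundamental domain together with a spanning tree of its $1$-skeleton. For each $\gamma_i$ take a reduced minimal-area disk diagram $D_i\to X$ filling $\gamma_i$; by the isoperimetric bound, $\mathrm{Im}(D_i)$ is a subcomplex of $X$ of uniformly bounded size. Setting
\[
Z \;=\; Y \;\cup\; \bigcup_{i=1}^{n} H\cdot \mathrm{Im}(D_i),
\]
we obtain an $H$-invariant, $H$-cocompact (only finitely many new $H$-orbits of cells), connected subcomplex of $X$ containing $Y$. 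It is simply connected because any loop in $Z$ factors, up to homotopy, as a concatenation of loops in $Y$ and boundary loops of attached diagram images: the former lie in the $H$-normal closure of $\{\gamma_i\}$, and each $\gamma_i$ bounds the attached $D_i$; the latter are already null-homotopic inside the attached diagrams.

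The main obstacle I anticipate lies in the non-free setting of the theorem: when cells of $X$ carry nontrivial $G$-stabilizers, the combinatorial Gauss-Bonnet calculation in the first ingredient must be recast in the language of complexes of groups, correctly weighting the stabilizer contributions to link curvature and area. This is precisely where the generalized sectional-curvature notion and the $\ell^2$-Betti-number bookkeeping alluded to in the introduction should enter, and getting the weighted isoperimetric inequality to still be linear is the technical heart. A secondary subtlety is justifying the bounded-length $H$-generation of $\pi_1(Y)$ in the second ingredient when $H$ has torsion, which requires the complex-of-groups presentation of $Y/H$ rather than a naive CW-theoretic argument.
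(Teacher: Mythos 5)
Your strategy of ``fill a finite set of $H$-generating loops once and take the union'' does not yield a simply-connected complex, and this is the central gap. The images $\mathrm{Im}(D_i)\subseteq X$ of the disk diagrams are not attached $2$-disks glued along $\gamma_i$: they are subcomplexes of $X$, and their overlaps with $Y$, with each other, and with their $H$-translates can create \emph{new} non-nullhomotopic loops in $Z=Y\cup\bigcup_i H\cdot\mathrm{Im}(D_i)$ that were not present in $Y$. Your Mayer--Vietoris-style justification for simple connectivity silently assumes these images meet $Y$ only along $\gamma_i$ and meet nothing else, which is false in general. Filling loops enlarges the subcomplex, and the enlarged subcomplex may again fail to be simply connected; this is exactly why the paper's proof is iterative. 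The paper constructs a sequence of $H$-equivariant immersions $Y_n\to X$ by repeatedly killing an essential path and then equivariantly folding, and the entire technical content is showing that this iteration \emph{terminates}. Termination is proved not by a linear isoperimetric inequality but by a careful count of $0$-cell orbits: a combination of (i) the collapsing step never increases $|\bnd\cup\isolated|$, (ii) $\pi_1$-surjectivity of $Y_0\to Y_n$ plus the $\ell^2$-Betti number machinery (Corollary~\ref{cor:betti-goal}) bounds $\chi(H,Y_n)$ from below by $-b_1^{(2)}(H,Y_0)$, and (iii) the Gauss--Bonnet formula for $G$-complexes converts these into a uniform bound on $|I^0(H,Y_n)|$, whence only finitely many immersions occur up to $G$-equivalence (Theorem~\ref{thm:main-counting}), and Lemma~\ref{lem:no-self-immersion} forces stabilization.

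A secondary gap is that your proposal ignores ``failures of injectivity.'' The paper's notion of an essential path $P\to Y_n$ includes paths that are not closed in $Y_n$ but become closed in $X$. Your construction keeps $Z$ as a subcomplex of $X$ and hence avoids this for $Z$ itself, but the problem resurfaces when you try to argue about the disk diagrams: a minimal-area diagram $D_i\to X$ need not embed, and $\mathrm{Im}(D_i)$ need not be simply connected, so ``each $\gamma_i$ bounds the attached $D_i$'' is not an assertion about a subcomplex of $X$. Finally, while a linear isoperimetric inequality does follow from negative sectional curvature in the free case, it is not the tool the paper uses here and would not repair the first gap even if established in the orbifold setting: the issue is not the size of a single filling but the possibility of an unbounded cascade of new loops, which is precisely what the $\ell^2$-Betti-number bound precludes.
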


The strategy of the proof is as follows. A sequence of $H$-equivariant immersions $Y_n\to X$ is constructed inductively from inclusion map $Y_0=Y\to X$. The complex $Y_{i+1}$ is obtained from $Y_i$ by either ``killing a loop" or correcting a ``failure of injectivity." From the construction, a computation shows that the orbifold Euler characteristic of $H \backslash Y_n$ is bounded from below by the first $\ell^2$-Betti number of $H\backslash Y_0$. Using that $X$ has negative sectional curvature, an analysis of the structure of $Y_n$ shows that the number of orbits of $0$-cells with non-negative curvature does not increase with $n$. Then using a version of the combinatorial Gauss-Bonnet theorem for orbihedra and the previous two upper bounds we obtain that the number of orbits of $0$-cells of $Y_n$ with negative curvature is uniformly bounded, and hence the total number of orbits of $0$-cells of $Y_n$ is uniformly bounded. Then a counting argument shows that there are finitely many possibilities for the immersions $Y_i\to X$ up to $G$-equivalence, and therefore the sequence $Y_n\to X$ stabilizes in an embedding $Z\to X$ of a simply connected complex.

\begin{cor}[Coherence criterion]
Let $G$ be a group admitting a proper cocompact action on a simply-connected 2-complex with negative sectional curvature. Then each finitely generated subgroup of $G$ is finitely presented.
\end{cor}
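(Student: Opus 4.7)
The plan is to reduce the coherence statement to Theorem~\ref{thm:core}. Given a finitely generated subgroup $H \leq G$, I would first construct a connected $H$-cocompact subcomplex $Y \subseteq X$, then apply the theorem to upgrade $Y$ to a simply-connected $H$-cocompact subcomplex $Z$, and finally deduce that $H$ is finitely presented from the fact that it acts properly and cocompactly on the simply-connected $2$-complex $Z$.

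To construct $Y$, fix a $0$-cell $x_0 \in X$ and a finite generating set $S$ for $H$. For each $s \in S$ choose a finite connected subcomplex $K_s \subseteq X$ containing $x_0$ and $s\cdot x_0$; for instance the image of a combinatorial edge-path joining these two vertices. Let $K = \bigcup_{s \in S} K_s$ and set $Y = H \cdot K$. Then $Y$ is $H$-invariant and $H$-cocompact, since $K$ is finite. For connectedness, given $h = s_1 \cdots s_n \in H$, the chain of translates $K,\, s_1 K,\, s_1 s_2 K,\, \ldots,\, hK$ links $K$ to $hK$ inside $Y$ via the common vertices $s_1 \cdots s_i \cdot x_0$; hence $Y$ is connected.

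Applying Theorem~\ref{thm:core} to the inclusion $Y \hookrightarrow X$ produces a simply-connected $H$-cocompact subcomplex $Z$ with $Y \subseteq Z \subseteq X$. The $G$-action on $X$ restricts to a proper action of $H$ on $Z$, which is cocompact by the theorem. Since $Z$ is a simply-connected $2$-dimensional CW-complex on which $H$ acts properly and cocompactly without inversions, it is a standard fact that $H$ is finitely presented: one extracts a finite presentation from the orbihedron $H \backslash Z$, taking generators from edges together with generators of the (finite) vertex stabilizers, and relators from the attaching maps of the finitely many $2$-cell orbits together with the finite vertex-group relations.

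Granted Theorem~\ref{thm:core}, the argument is essentially formal; the only points requiring care are the verification that the initial $Y$ is connected and $H$-cocompact, and the invocation of the classical finite-presentation theorem for proper cocompact actions on simply-connected $2$-complexes. The real obstacle resides in Theorem~\ref{thm:core} itself, whose proof, as outlined in the strategy paragraph of the paper, combines the generalized sectional curvature condition, $\ell^2$-Betti numbers, and a Gauss--Bonnet theorem for orbihedra.
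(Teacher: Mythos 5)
Your proposal is correct and follows the same route as the paper: construct a connected $H$-cocompact subcomplex $Y$, apply Theorem~\ref{thm:core} to obtain a simply-connected $H$-cocompact subcomplex $Z$, and conclude via the classical characterization of finite presentability through proper cocompact actions on simply-connected $2$-complexes (the paper cites \cite{M64} for this last step). The only difference is that you spell out the (standard but elided) construction of $Y$ as the $H$-orbit of a finite connected complex joining $x_0$ to its translates by the generators, which the paper asserts without detail.
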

\begin{proof}
Let $X$ a simply connected proper and cocompact $G$-complex with negative sectional curvature. Since $X$ is connected, for any finitely generated subgroup $H\leq G$ there is a connected and cocompact $H$-subcomplex of $X$. By  Theorem~\ref{thm:core} this subcomplex can be assumed to be simply connected.  Then the corollary follows by the well known fact that a group is finitely presented if and only if it acts properly and cocompactly on a simply connected $2$-complex~\cite{M64}. 
\end{proof}

\begin{conjecture}\label{conj:nonpositive}
Let $X$ be a simply-connected, proper, and cocompact angled $G$-complex with sectional curvature $\leq 0$. If $H$ is a subgroup of $G$ and $Y \subseteq X$ is connected $H$-cocompact subcomplex of $X$, then there is simply-connected $H$-cocompact subcomplex $Z$ such that $Y\subseteq Z \subseteq X$.
\end{conjecture}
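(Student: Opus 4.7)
The plan is to imitate the inductive construction of Theorem~\ref{thm:core} and pinpoint where the weakening from strict negativity to non-positivity creates a genuine obstacle. Starting from $Y_0 = Y \hookrightarrow X$, I would build the same sequence of $H$-equivariant immersions $Y_n \to X$, with $Y_{i+1}$ obtained from $Y_i$ by killing a loop or correcting a failure of injectivity. The orbifold Euler characteristic computation giving $\chi(H\backslash Y_n) \geq b_1^{(2)}(H\backslash Y_0)$ transfers verbatim, since it depends only on the nature of the modifications and not on the curvature of $X$. As before, the goal reduces to a uniform upper bound on the number of $H$-orbits of $0$-cells in $Y_n$; once this is in hand, cocompactness of $X$ produces only finitely many $G$-equivalence classes of immersions $Y_n \to X$, and the sequence stabilizes at the desired embedding $Z \to X$.

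The Gauss--Bonnet step is where non-positivity, rather than strict negativity, breaks the argument. With $\standardcurvature \leq 0$, a $0$-cell may contribute zero to the total orbifold curvature, so Gauss--Bonnet alone no longer bounds the number of such vertices. My plan is to partition the $0$-cells of $Y_n$ into those whose link admits a finite subgraph of strictly negative curvature and those at which every qualifying finite subgraph is flat. The first class is controlled by orbifold Gauss--Bonnet combined with cocompactness of $X$, which renders strictly negative curvatures bounded away from $0$ and so bounds this class in terms of $\chi(H\backslash Y_n)$. For the second class, cocompactness of $X$ yields finitely many $G$-orbits of possible flat link configurations; I would then try to show that whenever the move $Y_i \to Y_{i+1}$ is incident to a flat $0$-cell, either a new strictly negatively curved vertex is created (and absorbed by the first class), or a well-founded combinatorial invariant measuring the configuration of flat subcomplexes of $H\backslash Y_i$ strictly decreases.

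The principal obstacle is formulating and verifying this well-founded descent on flat configurations in the orbifold setting. A single $H$-orbit of $0$-cells may mix vertices whose local behavior becomes distinct after projection to $H\backslash Y_n$, which complicates the bookkeeping, and even in the freely-acting case of~\cite{Wi04} the treatment of flat pieces is delicate and relies on structural results for flat disc and annular diagrams that must be adapted to complexes of groups. A secondary subtlety, absent from Theorem~\ref{thm:core}, is that the $\ell^2$-Betti number lower bound on $\chi(H\backslash Y_n)$ need not be strictly negative, so the contradiction producing the uniform bound must be sharper: the descent on flat configurations cannot merely compete with $b_1^{(2)}$ but must terminate independently. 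Overcoming these two issues in tandem is what I expect to be the hard part; all other ingredients of Theorem~\ref{thm:core} should transfer with only cosmetic adjustments.
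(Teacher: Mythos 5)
This statement is labeled a \emph{conjecture} in the paper precisely because no proof is known, so there is no proof of record to compare your attempt against. What you have written is not a proof but a diagnosis, and as a diagnosis it is essentially accurate: you correctly identify that the inductive construction, the $\ell^2$-Euler characteristic bound (modulo a sign — the paper's inequality is $\chi(H, Y_n) \geq -b_1^{(2)}(H, Y_0)$, not $\geq b_1^{(2)}$), and the finiteness-of-immersions argument via Lemma~\ref{lem:counting-immersions} all transfer, and that the sole point of failure is bounding the number of $H$-orbits of $0$-cells whose links are flat. With strict negativity, Corollary~\ref{cor:zero-weight} forces any zero-curvature link in $Y_n$ to be a two-point edgeless graph or have infinite-stabilizer vertices, which is what lets $\zero(H,Y_n)$ be absorbed into $\isolated(H,Y_n)\cup\bnd(H,Y_n)$ and controlled by Theorem~\ref{thm:collapsing-essential-paths}\eqref{thm:collapsing-3}; once $\alpha=0$ is allowed, this collapse of cases no longer holds and flat links proliferate unboundedly in principle.

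Your proposed remedy — a well-founded descent on flat configurations, parallel to the treatment of flat disc and annular subdiagrams in~\cite{Wi04} — is a natural candidate, but you should be explicit that you have not supplied the invariant, nor verified that it decreases under the moves $Y_i \to Y_{i+1}$, nor shown it interacts correctly with the equivariant folding and pushout steps (which can merge $H$-orbits of flat vertices in ways that have no analogue in the free case). Until those three items are in hand, this is a research program, not a proof, which is exactly why the paper records the statement as Conjecture~\ref{conj:nonpositive} rather than as a theorem. Your write-up would be strengthened by stating this up front rather than implying the gap is merely technical.
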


The main result of the paper is the following criterion for local quasiconvexity.  
A subspace $Y$ of a geodesic space $X$ is {\em quasiconvex} if there is a constant $L$ such that every geodesic in $X$ that joins two elements of $Y$ lies in the $L$-neighborhood of $Y$. 

\begin{thm}[Quasiconvex cores] \label{thm:qccore}
Let $X$ be a $2$-dimensional proper cocompact  $CAT(0)$ $G$-complex whose cells are convex. Assign angles as they arise from the $CAT(0)$-metric. Suppose $X$ has negative sectional curvature.  If $H<G$ and $Y$ is a simply-connected cocompact $H$-subcomplex, then $Y$ is a quasiconvex subspace of $X$. 
\end{thm}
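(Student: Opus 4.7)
The proof will be by contradiction, using a disk diagram in $X$ and the combinatorial Gauss-Bonnet theorem in the same spirit as the proof of Theorem~\ref{thm:core}. Suppose $Y$ is not quasiconvex: then for every $n$ there exist $a_n, b_n \in Y$, a CAT(0) geodesic $\gamma_n \colon [0,L_n] \to X$ joining them, and $t_n \in [0,L_n]$ with $d_X(\gamma_n(t_n), Y) \to \infty$. I would pass to a combinatorial picture by replacing each $\gamma_n$ with an edge-path $\gamma'_n$ in $X^{(1)}$ at bounded Hausdorff distance from $\gamma_n$, and by choosing a shortest edge-path $\alpha_n$ in $Y^{(1)}$ from $b_n$ to $a_n$. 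Since $X$ is simply connected, the loop $\gamma'_n \alpha_n$ bounds a disk diagram $D_n \to X$, which I take to minimize area (with ties broken by the number of interior $0$-cells). Minimality ensures that at every interior $0$-cell $v$ of $D_n$ the link of $v$ in $D_n$ maps into a subgraph of $\link_X(v)$ that contains a cycle and has no valence-one vertex, so condition (1) of the sectional curvature definition applies at $v$.

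I would then apply the combinatorial Gauss-Bonnet theorem to $D_n$, obtaining
\[
\sum_{v \in D_n^{(0)}} \kappa(v) \;+\; \sum_{f \in D_n^{(2)}} \kappa(f) \;=\; 2\pi \chi(D_n) \;=\; 2\pi.
\]
Condition (2) of the sectional curvature definition gives $\kappa(f) \leq 0$ for every interior $2$-cell. Because the $G$-action is cocompact only finitely many isomorphism types of angled link occur, so condition (1) produces a uniform $\epsilon > 0$ with $\kappa(v) \leq -\epsilon$ at every interior $0$-cell. For a $0$-cell $v$ on the $\gamma'_n$-portion of $\partial D_n$, the CAT(0) geodesic property of $\gamma_n$ forces the link distance between the two directions of $\gamma_n$ at $v$ to be at least $\pi$, so the interior angles of $D_n$ at $v$ sum to at least $\pi$ and hence $\kappa(v) \leq 0$. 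Along $\alpha_n$ the trivial bound $\kappa(v) \leq \pi$ holds. Combining these estimates yields
\[
\epsilon \cdot \#\{\text{interior $0$-cells of } D_n\} \;\leq\; \pi \cdot |\alpha_n| - 2\pi,
\]
so the interior of $D_n$ is controlled linearly by $|\alpha_n|$.

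The final and hardest step is to close the contradiction. The strategy is to exploit the convex cells hypothesis to compare the CAT(0) metric on $X$ with the combinatorial metric on $D_n$: the convexity of cells lets one bound $d_X(\gamma'_n(t_n), \text{image of }\partial D_n)$ from above by a constant multiple of the combinatorial distance in $D_n$, so the preimage of $\gamma'_n(t_n)$ lies in a combinatorial ball of radius at least $c \cdot d_X(\gamma_n(t_n), Y)$ whose interior lies entirely in the interior of $D_n$. A discrete isoperimetric argument in the negatively curved setting makes the number of interior $0$-cells of such a ball grow at least linearly in its radius. A complementary bound $|\alpha_n| \leq C \cdot d_X(a_n,b_n) + D$ is obtained by concatenating $H$-translates of edge-paths inside a fundamental domain for the cocompact $H$-action on $Y$. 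After translating by $G$ to place $\gamma'_n(t_n)$ in a fixed compact set and extracting a convergent subsequence, these two bounds combined with the Gauss-Bonnet estimate above force $d_X(\gamma_n(t_n), Y)$ to remain bounded, contradicting the assumption. The main obstacle is precisely this passage from the CAT(0) metric of $X$ to the combinatorial depth inside the minimal disk diagram, which is where the convex cells assumption is genuinely used.
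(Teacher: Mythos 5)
Your proposal correctly identifies the general skeleton---disk diagram plus combinatorial Gauss--Bonnet---but the specific way you set up the counting argument cannot close, and the missing step is exactly where the paper's use of $H$-equivariance and $\ell^2$-Betti numbers enters.

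The key inequality you derive,
\[
\epsilon \cdot \#\{\text{interior $0$-cells of } D_n\} \;\leq\; \pi \cdot |\alpha_n| - 2\pi,
\]
is an isoperimetric inequality: it controls the area of $D_n$ by its boundary length. But $|\alpha_n|$ is not bounded---it grows with the separation of $a_n$ and $b_n$, and, a priori, $Y$ could be exponentially distorted in $X$, so $|\alpha_n|$ could be exponential in the length $L_n$ of $\gamma_n$. Quasiconvexity requires a \emph{uniform} bound on $d_X(\gamma_n(t_n), Y)$, i.e., on the combinatorial depth of $D_n$, independent of $|\alpha_n|$. An "isoperimetric argument in the negatively curved setting" would bound depth by roughly $\log(\text{area})$, but then you would need $\log|\alpha_n|$ bounded, which is precisely what you do not know. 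This is not a gap you can fill with the convex-cells hypothesis: the issue is that $D_n$ need not contain a large combinatorial ball around $\gamma'_n(t_n)$ even when $\gamma'_n(t_n)$ is metrically far from $Y$ (the diagram can be thin there), and bounding $|\alpha_n|$ in terms of $L_n$ is essentially equivalent to the undistortedness that quasiconvexity is supposed to prove, so the argument becomes circular. A secondary issue is your claim that $\kappa(v)\leq 0$ along $\gamma'_n$: the $0$-cells there have interval links in $D_n$, so $\kappa(v)=\pi-\sum\measuredangle$, and you need the combinatorial approximation $\gamma'_n$ to have angle sum $\geq\pi$ at each vertex---this is not automatic for an arbitrary edge-path at bounded Hausdorff distance from $\gamma_n$ and requires the careful carrier construction of the paper (the lemmas establishing minimality properties of $P_L$ and $D$).

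The paper's proof avoids your dead end by never applying Gauss--Bonnet to the diagram $D$ alone. Instead it attaches $H$-translates of a (subdivided) disk diagram $E'$ to a subdivision $Y'$ of $Y$, folds equivariantly to obtain an $H$-equivariant immersion $Z'\to X'$, and applies the \emph{orbihedral} Gauss--Bonnet formula (Theorem~\ref{thm:Gauss-Bonnet}) to $(H, Z')$. The crucial point is $\chi(H, Z') = \chi(H, Y)$, proved via $\ell^2$-Betti numbers and the contractibility of $Y'$ and $Z'$ (Corollary~\ref{cor:betti-goal-2}); this replaces the boundary term $\pi|\alpha_n|$, which blows up, by a quantity depending only on $(H, Y)$. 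This yields a uniform bound on $|\negativecells(H, Z')|$, the number of $H$-orbits of negatively curved $0$-cells in $Z'$. Finally, rather than counting area, the paper constructs a \emph{good path} $Q$ (Proposition~\ref{prop:qcmain1}) from a vertex of $P_L$ to $P_Y$ whose interior lifts to internal $0$-cells of $Z'$; since these all have strictly negative curvature and a length-minimal $Q$ visits distinct $H$-orbits, $|Q| \leq 1 + |\negativecells(H, Z')|$, a uniform constant. Your proposal has no analogue of this orbit-counting step, and without it the argument cannot produce a bound independent of $\ell$.
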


\begin{cor}[Local Quasiconvexity Criterion]
Let $G$ be a group admitting a proper cocompact action on a $2$-dimensional $CAT(0)$-complex with convex cells and negative sectional curvature. Then $G$ is a locally quasiconvex hyperbolic group.
\end{cor}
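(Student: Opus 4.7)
The plan is to combine Theorem~\ref{thm:core} and Theorem~\ref{thm:qccore} with a short argument ruling out flat planes, thereby establishing both hyperbolicity of $G$ and quasiconvexity of every finitely generated subgroup.

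\emph{Hyperbolicity.} I first verify that $X$ contains no isometrically embedded flat plane, so that the Flat Plane Theorem for proper cocompact $CAT(0)$ actions yields word-hyperbolicity of $G$. Suppose for contradiction that $F \subseteq X$ is a flat plane. Since $G$ acts cocompactly and cells are bounded, a routine limiting argument (translating a $0$-cell near $F$ to a fixed orbit representative and passing to a subsequence) produces a flat plane through a $0$-cell $v$ of $X$. Since $X$ is $2$-dimensional, the link of $v$ in this flat is a simple cycle $\Delta \subseteq \link(v)$ with no valence-one vertex, and flatness at $v$ forces the angles on the edges of $\Delta$ to sum to $2\pi$. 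Using $\chi(\Delta) = 0$,
\[
\standardcurvature(\Delta) = 2\pi - \pi \cdot 0 - 2\pi = 0,
\]
contradicting negative sectional curvature. Hence $X$ has no flat planes and $G$ is hyperbolic.

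\emph{Local quasiconvexity.} Let $H \leq G$ be finitely generated with generating set $\{h_1, \dots, h_n\}$, and fix a $0$-cell $x_0 \in X$. Choose a finite connected subcomplex $K \subseteq X$ containing $\{x_0, h_1 x_0, \dots, h_n x_0\}$; then $Y_0 := H \cdot K$ is connected (for each $h \in H$ and each generator $h_i$, the translates $hK$ and $hh_i K$ share the $0$-cell $hh_i x_0$) and $H$-cocompact. By Theorem~\ref{thm:core}, $Y_0$ extends to a simply-connected $H$-cocompact subcomplex $Z$ with $Y_0 \subseteq Z \subseteq X$. By Theorem~\ref{thm:qccore}, $Z$ is a quasiconvex subspace of $X$. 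The $H$-action on $Z$ is proper (inherited from the $G$-action on $X$) and cocompact by construction, so applying the Milnor--Schwarz lemma to both actions gives orbit maps $G \to X$ and $H \to Z$ that are quasi-isometries compatible with the inclusions $H \hookrightarrow G$ and $Z \hookrightarrow X$. Quasiconvexity of $Z$ in $X$ therefore transfers to quasiconvexity of $H$ in $G$, and so $G$ is locally quasiconvex.

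\emph{Main obstacle.} The substantive content resides in Theorems~\ref{thm:core} and~\ref{thm:qccore}; the corollary is an assembly. The only step requiring care is the upgrade of an abstract flat plane to one containing a $0$-cell of $X$, carried out by a cocompactness-based translation and subsequence argument; once achieved, the link computation is immediate. The remaining ingredients are the routine construction of a cocompact seed from a finite generating set and the standard Milnor--Schwarz translation of subspace quasiconvexity into subgroup quasiconvexity.
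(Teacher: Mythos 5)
Your proof is correct and reaches the conclusion, but the hyperbolicity part takes a genuinely different route from the paper. The paper argues combinatorially: negative sectional curvature together with the positivity of angles induced by the $CAT(0)$ metric shows that $X$ satisfies Gersten's negative weight test (\cite[Lem.~2.11]{Wi04}), which yields a linear isoperimetric inequality and hence hyperbolicity of $X$, and so of $G$, without ever invoking the Flat Plane Theorem. You instead pass through the Flat Plane Theorem for proper cocompact $CAT(0)$ spaces and rule out flats using the sectional-curvature hypothesis. Both routes are legitimate; the paper's is lighter on geometric machinery and stays within the paper's combinatorial framework, while yours leans on a deeper (but standard) $CAT(0)$ result.

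A small point to tighten in the flat-plane step: in a $2$-dimensional $CAT(0)$ complex with convex cells, any isometrically embedded flat plane $F$ is automatically a subcomplex --- its intersection with each open $2$-cell is open and closed in that cell, so $F$ is a union of closed $2$-cells. In particular $F$ already contains $0$-cells, and at each such $0$-cell the link of $F$ is a genuine combinatorial cycle $\Delta \subseteq \link(v)$. This replaces both the ``routine limiting argument'' (which as written would need more justification) and the implicit assumption that the flat's link is a combinatorial subgraph. For the local quasiconvexity part your argument matches the paper's: build a connected $H$-cocompact subcomplex, apply Theorem~\ref{thm:core}, then Theorem~\ref{thm:qccore}; you just make explicit the Milnor--Schwarz translation from quasiconvexity of $Z$ in $X$ to quasiconvexity of $H$ in $G$, which the paper leaves implicit.
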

\begin{proof}
Let $X$ be a $G$-complex as in the statement. 
Since $X$ has negative sectional curvature and angles are positive, $X$ satisfies Gersten's  negative weight test~\cite[Lem. 2.11]{Wi04}. It follows that $X$ satisfies a linear isoperimetric inequality and hence $X$ is a $\delta$-hyperbolic space and $G$ is a hyperbolic group~\cite[Thm. A6]{Ge87}.  By Theorem~\ref{thm:core}, every finitely generated subgroup $H$ of $G$ admits a simply-connected and $H$-cocompact subcomplex of $X$; then Theorem~\ref{thm:qccore} implies that these subcomplexes are quasiconvex in $X$. 
\end{proof}

The strategy of the proof of Theorem~\ref{thm:qccore} is the following. Let $\ell$ be a geodesic with respect to the $CAT(0)$-metric in $X$ with endpoints in the $0$-skeleton of $Y$. Since $\ell$ is not a combinatorial path in the cell structure of $X$, we approximate $\ell$ with a combinatorial path $P_L\to X$ which has the same endpoints and is uniformly close. We also take a path $P_Y \to Y$ between the endpoints of $\ell$. The choices of $P_Y$ and $P_L$ are made under some technical assumptions, in particular, they minimize the area of the disk-diagram $D$ with boundary cycle $P_L P_Y^{-1}\to X$. Let $u$ be a $0$-cell of $P_L\to X$. Analyzing the cell-structure of $D$, we show that there exists a {\em good path} $Q\to D$ with initial point $u$ and terminal point in $P_Y\to X$; by good we mean that if a $0$-cell $x$ of $Q\to D\to X$ intersects $\ell$ then $x$ is in the interior of $D$.  Using that $X$ has a $CAT(0)$-structure,  we construct an $H$-equivariant immersion $Z'\to X'$ where $X'$ is a subdivision of $X$, any good path $Q\to D\to X$ lifts as an internal path of $Q'\to Z'$ (after subdividing), and the number of $H$-orbits of $0$-cells with negative curvature of $Z'$ is bounded by a constant independent of $\ell$. By the existence of good paths, we can take a path $Q\to X$ of minimal length from $u$ to $P_Y\to X$ such that there is a lifting $Q'\to Z'$ which is internal. Then the proof concludes in the following way. Since $X'$ has non-positive sectional curvature, if a $0$-cell $x$ of $Z'$ is internal i.e., $\link(x)$ has a cycle, then its curvature is non-positive. Since $X$ has negative sectional curvature, if $x$ has zero curvature in $Z'$ and is internal then its image in $X$ is not a $0$-cell. Therefore the length of $Q\to X$ equals the number of $0$-cells of $Q'\to Z'$ with negative curvature plus $1$; by minimality no two $0$-cells of $Q\to X$ are in the same $H$-orbit; therefore $|Q|$ is bounded by the number of orbits of  $0$-cells of $Z'$ with negative curvature plus $1$. By construction of $Z'$ this number is uniformly bounded independently of $\ell$.

A $(p,q,r)$-complex is a combinatorial 2-complex $X$ such that the attaching map of each $2$-cell has length $\geq p$, for each $x\in X^0$ the $\link (x)$ has girth $\geq q$, and each $1$-cell $e$ of $X$ appears $\leq r$ times among the attaching maps of $2$-cells. The second author provide a criterion for negative sectional curvature of $(p,q,r)$-complexes in~\cite{wisepqr} from which the following application follows. 

\begin{cor}
Let $X$ be a $2$-complex. Suppose that
\begin{enumerate}
\item $X$ is $(p, 3, p-3)$-complex for $p \geq 7$,
\item $X$ is $(p, 4, p-2)$-complex for $p \geq 5$, or
\item $X$ is a $(p, 5, p-1)$-complex where $p \geq 4$.
\end{enumerate}
Then any group $G$ acting properly and cocompactly on $X$ is a locally quasiconvex hyperbolic group.
\end{cor}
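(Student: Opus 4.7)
My plan is to equip $X$ with a piecewise-hyperbolic metric that makes it a $CAT(-1)$ complex with convex $2$-cells, and then to apply the Local Quasiconvexity Criterion. Throughout I assume $X$ is simply connected; otherwise one passes to the universal cover, which remains a $(p,q,r)$-complex with girth and edge-multiplicity inherited locally.

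First, in each of the three cases I would assign every corner of $X$ the uniform angle $\alpha = 2\pi/q$---that is, $2\pi/3$, $\pi/2$, and $2\pi/5$ in cases (1), (2), (3) respectively---and realize each $2$-cell $f$ as a regular hyperbolic $|\partial f|$-gon with corner angle $\alpha$. Such a polygon exists precisely when $|\partial f|\cdot\alpha < (|\partial f|-2)\pi$, and a short calculation confirms this for $|\partial f|\geq p$ under each of the hypotheses $p\geq 7$, $p\geq 5$, and $p\geq 4$. The resulting $2$-cells are convex by construction and the $G$-action remains cellular and isometric.

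Next I would verify Gromov's link condition at each $0$-cell: every edge of the link carries length $\alpha$, and since $X$ has $\link$-girth at least $q$, every simple cycle in the link has length at least $q\alpha = 2\pi$. Thus each link is $CAT(1)$, so $X$ is locally $CAT(-1)$, and because $X$ is simply connected Cartan--Hadamard gives that it is globally $CAT(-1)$, hence in particular $CAT(0)$. The corner angles arising from this metric coincide with the uniform assignment above, so the criterion of~\cite{wisepqr} applies and certifies that $X$ has negative combinatorial sectional curvature in the sense of the excerpt. The Local Quasiconvexity Criterion then yields that $G$ is a locally quasiconvex hyperbolic group.

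The main obstacle is the step delegated to~\cite{wisepqr}: extracting the two combinatorial sectional-curvature inequalities ($\standardcurvature(\Delta)<0$ on admissible link subgraphs and $\standardcurvature(f)\leq 0$ on $2$-cells) from the bare $(p,q,r)$-data with uniform corner angle $2\pi/q$. The $2$-cell inequality reduces to the elementary bound $p\alpha\leq(p-2)\pi$ verified above, so the genuine content is the link inequality, which relies on the girth/edge-multiplicity interaction handled in~\cite{wisepqr}. Once that input is accepted, the remaining work---metric realization, link-condition verification, and invocation of the Local Quasiconvexity Criterion---is routine.
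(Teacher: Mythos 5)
Your high-level plan---metrize $X$ (or its universal cover) as a $CAT(-1)$ piecewise-hyperbolic complex with convex cells, invoke~\cite{wisepqr} for negative sectional curvature with the metric angles, and conclude via the Local Quasiconvexity Criterion---is exactly what the paper has in mind; the paper states the corollary with no argument beyond the citation to~\cite{wisepqr}. The gap in your writeup is in the metrization. You realize each $2$-cell as a \emph{regular} hyperbolic $n$-gon with corner angle exactly $2\pi/q$, where $n=|\partial f|$. But the side length of a regular hyperbolic $n$-gon with a fixed corner angle $\alpha$ is determined by $n$ (from $\cosh(s/2)=\cos(\pi/n)/\sin(\alpha/2)$) and strictly increases with $n$. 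A $(p,q,r)$-complex is only required to have $2$-cells of boundary length $\geq p$, not $=p$, so whenever two $2$-cells of different sizes share a $1$-cell, your recipe assigns that $1$-cell two incompatible lengths. The piecewise metric is therefore not well-defined, and the convexity of cells, the isometry of the $G$-action, the Cartan--Hadamard step, and the claim that the corner angles ``coincide with the uniform assignment'' all presuppose a metric that may not exist.

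The standard repair is to fix the side length rather than the angle: choose a common length $s$ for every $1$-cell, calibrated so that the regular hyperbolic $p$-gon of side $s$ has corner angle exactly $2\pi/q$ (this requires $1/p+1/q<1/2$, which holds in all three cases), and then realize every $n$-gon as the regular hyperbolic $n$-gon of side $s$. For fixed $s$ the corner angle $\alpha(n,s)$ is increasing in $n$, hence $\geq 2\pi/q$ for all $n\geq p$, so every simple cycle in a link still has angular length $\geq q\cdot 2\pi/q=2\pi$ and the complex is $CAT(-1)$ with convex cells. Since $\standardcurvature(\Delta)$ only decreases when angles increase and $\standardcurvature(f)<0$ for any hyperbolic polygon, the negative sectional curvature furnished by~\cite{wisepqr} for the uniform assignment $2\pi/q$ carries over to these metric angles, and the Local Quasiconvexity Criterion applies. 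With that change your argument is complete; without it the appeal to the criterion (which explicitly requires the angles to arise from a $CAT(0)$ metric on $X$) is not justified.
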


\subsection*{Outline of the paper.} Section~\ref{sec:preliminaries} discusses preliminaries. Section~\ref{sec:curvature} contains definitions of sectional curvature and generalized sectional curvature which are used during the rest of the paper. Section~\ref{sec:gaussbonnet} discusses the combinatorial Gauss-Bonnet theorem for angled $G$-complexes.  Section~\ref{sec:betti} recalls some results in the literature on $\ell^2$-Betti numbers. 
Section~\ref{sec:folding} discusses equivariant immersions as a preliminary of the proof of the  main results of the paper. The  proof of the simply connected core theorem is the content of Section~\ref{sec:core}.  Section~\ref{sec:qccore} contains the proof of  Theorem~\ref{thm:qccore}. The last section discusses a criterion establishing that certain quotiens of locally quasiconvex groups are  locally quasiconvex.

\subsection*{Acknowledgment:}
The authors thanks the referee for excellent feedback, and for pointing out a serious gap in an earlier version of the paper.  Both authors are supported by NSERC.

\section{Preliminaries}\label{sec:preliminaries}

\subsection{Complexes and Disk diagrams}

This paper follows the notation used in \cite{McWi-fans}, and in this section we quote various of those relevant notations for the convenience of the reader. All complexes considered in this paper are combinatorial $2$-dimensional complexes, and all maps are combinatorial.

\begin{defn}[Path and Cycle]\cite[Def 2.5]{McWi-fans}
A \emph{path} is a map $P\rightarrow X$ where $P$ is a subdivided
interval or a single $0$-cell.  
A \emph{cycle} is a map $C\rightarrow X$ where $C$ is a subdivided circle.  
Given two paths $P\rightarrow X$ and $Q\rightarrow X$ such that the terminal point of $P$ and
the initial point of $Q$ map to the same $0$-cell of $X$, their
concatenation $PQ\rightarrow X$ is the obvious path whose domain
is the union of $P$ and $Q$ along these points.  The path
$P\rightarrow X$ is \emph{closed} if the
endpoints of $P$ map to the same $0$-cell of $X$.  A path or cycle
is \emph{simple} if the map is injective on $0$-cells.
The \emph{length} of the path $P$ or cycle $C$ is the number of
$1$-cells in the domain and is denoted by $\size{P}$ or
$\size{C}$.  The \emph{interior of a path} is the path minus its
endpoints.   A \emph{subpath} $Q$ of a path $P$ is given by a path $Q \rightarrow P
\rightarrow X$ in which distinct $1$-cells of $Q$ are sent to distinct $1$-cells of $P$.
\end{defn}

\begin{defn}[Disc Diagram]\cite[Def 2.6]{McWi-fans}\label{def:diagrams}
A {\em disc diagram} $D$ is a compact contractible $2$-complex with a
fixed embedding in the plane.  A {\em boundary cycle} $P$ of $D$ is a
closed path in $\partial D$ which travels entirely around $D$ (in a
manner respecting the planar embedding of $D$). For a precise definition we refer the reader to~\cite{McWi-coherence}.

Let $P\rightarrow X$ be a closed null-homotopic path.  A {\em disc
diagram in $X$ for $P$} is a disc diagram $D$ together with a map
$D\rightarrow X$ such that the closed path $P\rightarrow X$ factors as
$P\rightarrow D\rightarrow X$ where $P\rightarrow D$ is the boundary
cycle of $D$. Define $\area (D)$ as the number of $2$-cells in $D$.
\end{defn}

\begin{defn}[Arc]\cite[Def 5.4]{McWi-fans}
An \emph{arc} in a diagram $D$ is an embedded path $P\rightarrow D$  such that each of its internal 0-cells is mapped to a 0-cell with valence 2 in $D$.  The arc is \emph{internal} if its interior lies in the interior of $D$, and it is a \emph{boundary arc} if it lies entirely in $\partial D$.
\end{defn}

\begin{defn}[Internal path]\label{defn:internal}
A path $P\to X$ is {\em internal} if each $0$-cell in the interior of $P$ is mapped to a $0$-cell of $X$ whose link contains an embedded cycle.
\end{defn}

\begin{defn}[Links]\cite[Def 4.1]{McWi-fans}
Let $X$ be a locally finite complex and let $x$ be a $0$-cell of $X$.
The cells of $X$ each have a natural partial metric obtained by making every $1$-cell isometric to the unit interval and every $n$-sided $2$-cell isometric to a Euclidean disc of circumference $n$
whose boundary has been subdivided into $n$ curves of length $1$. In this
metric, the set of points which are a distance equal to $\epsilon$ from $x$ will form a
finite graph. If $\epsilon$ is sufficiently small, then the graph obtained is independent
of the choice of $\epsilon$. This well-defined graph is the link of $x$ in $X$ and is denoted by $\link (x, X)$. 
\end{defn}

\begin{defn}[Immersions, near-immersions]\cite[Def 2.13]{McWi-fans}
The map $Y\to X$ is an {\em immersion} if it is locally injective. 
The map $Y \to X$ is a {\em near-immersion} if $Y \backslash Y^{(0)} \to X$ is
locally injective. Equivalently, a map is an immersion if the induced maps on links of $0$-cells are embeddings and a map is a near-immersion if the induced maps on links of $0$-cells are immersions.
\end{defn}
The following lemma is essential for the rest of the paper. It is an immediate consequence of the fact that  immersions of graphs are $\pi_1$-injective.
\begin{lem}[Near-immersions map internal $0$-cells to internal $0$-cells]
Let $X\to Y$ be a near-immersion mapping the $0$-cell $x$ to $y$. If $\link (x)$ has an embedded cycle then $\link (y)$ has an embedded cycle.
\end{lem}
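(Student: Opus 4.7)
The plan is to convert the statement into the language of graph immersions via the equivalent reformulation of near-immersion provided just above the lemma. By that equivalent definition, the induced map on links $\link(x,X)\to\link(y,Y)$ is an immersion of graphs, so the whole statement reduces to showing that an immersion of graphs sends a subgraph containing an embedded cycle to a subgraph containing an embedded cycle.

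First I would record the standard fact that an immersion $f\colon A\to B$ of graphs is $\pi_1$-injective. This is the key ingredient flagged by the authors. One quick way: an immersion of graphs lifts to an embedding into the universal cover of the target, because one can inductively lift it $1$-cell by $1$-cell using local injectivity, and an embedding into a tree has trivial $\pi_1$-kernel. Alternatively, one may invoke the standard fact that an immersion between graphs factors as an inclusion of a subgraph into a cover of the target.

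Next I would observe that an embedded cycle in a graph represents a non-trivial conjugacy class in its fundamental group: the inclusion of a circle into a graph is $\pi_1$-injective because the circle is itself a graph and the inclusion is an immersion, so by the previous step the loop it carries is non-trivial in $\pi_1$. Applying this to the embedded cycle in $\link(x,X)$ gives a non-trivial element of $\pi_1(\link(x,X))$; $\pi_1$-injectivity of the induced immersion on links maps it to a non-trivial element of $\pi_1(\link(y,Y))$. Hence $\link(y,Y)$ is not a forest, so it must contain an embedded cycle (take, for instance, a shortest essential loop).

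There is no real obstacle here; the statement is essentially a transcription of ``immersions of graphs are $\pi_1$-injective.'' The only subtlety worth double-checking is that the direction of the near-immersion $X\to Y$ matches the direction of the induced map $\link(x,X)\to\link(y,Y)$, which it does because the equivalent formulation just below the definition of near-immersion is stated precisely with that convention.
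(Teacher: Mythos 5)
Your proposal is correct and takes essentially the same approach as the paper: the paper gives no written proof, only the remark that the lemma is ``an immediate consequence of the fact that immersions of graphs are $\pi_1$-injective,'' which is precisely the reduction (to the induced immersion on links) and the key fact you use. The additional steps you supply---that an embedded cycle is essential in $\pi_1$ of the source link and that a graph with nontrivial $\pi_1$ contains an embedded cycle---are the routine unwinding the authors left implicit.
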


\begin{defn} [Corners]\cite[Def 4.2]{McWi-fans} Let $X$ be a $2$-complex, let $x$ be a $0$-cell of $X$,
and let $R\to X$  be a $2$-cell of $X$. Regard the $2$-cells of $X$ as polygons. Then the edges of
$\link (x)$ correspond to the corners of these polygons attached to $x$. We will
refer to a particular edge in $\link (x)$ as a corner of $R$ at $x$.
\end{defn}

\subsection{$G$-complexes }

All group actions on complexes are without inversions, i.e., a setwise stabilizer of a cell is a pointwise stabilizer. Under this assumption, quotients of complexes by group actions have an induced cell structure. 

\begin{defn}[$I^p(G, X)$ and $|G_\sigma|^{-1}$]
Let $X$ be a  $G$-complex $X$. Let $I^p(G, X)$ be the set of orbits of $p$-dimensional cells.  For $\sigma \in I^p(G, X)$, let $|G_\sigma|^{-1}$ denote the reciprocal of the order of the $G$-stabilizer of a representative of $\sigma$ in $X$, where $|G_\sigma|^{-1}$ is understood to be zero if the order is infinite.
\end{defn}

\begin{defn}[Angled $K$-graph]
Let $K$ be a group. An {\em angled $K$-graph} is a graph $\Gamma$ equipped with a $K$-action and a $K$-map $\mangle: \edges (\Gamma) \to \R$.  For an edge $e$ of $\Gamma$, the number $\mangle (e)$ is called the {\em angle at $e$}.
\end{defn}

\begin{rem}[Connection with angled $G$-complexes]
If $X$ is an angled $G$-complex and $x$ is a 0-cell of $X$, each edge $e$ of $\link (x)$ corresponds to a corner of a 2-cell of $X$ and is thus associated to a real number $\mangle (e)$. This assignment of angles to the edges of $\link (x)$ is preserved under the $G_x$-action. In particular, $\link (x, X)$ is an angled $G_x$-graph.
\end{rem}

\section{Curvature}~\label{sec:curvature}

\begin{defn}[Curvature of $K$-graphs]\label{defn:weight}
Let $K$ be a group. For a cocompact angled $K$-graph $\Gamma$, define
\begin{equation*}
\standardcurvature (K, \Gamma) = 2 \pi \cdot |K|^{-1}  -  \sum_{v \in I^0 (K, \Gamma) }\pi \cdot |K_v|^{-1} + \sum_{e \in I^1 (K, \Gamma)} \Bigl(\pi- \mangle (e) \Bigr) \cdot |K_e|^{-1}.
\end{equation*}
\end{defn}

\begin{defn}[Regular Section] \label{defn:section}
Suppose that $\Gamma$ is an angled $K$-graph and $H$ is a subgroup of $K$. An {\em $H$-subgraph} $\Delta$ is an $H$-invariant subgraph of $\Gamma$, and an {\em $H$-section} is an $H$-subgraph which is $H$-cocompact. An edge having a vertex with valence one is called a {\em spur}, and a graph with no vertices with valence one is called {\em spurless}.  A spurless, connected, and not edgeless $H$-section is called {\em regular}.  An edgeless $H$-section is called {\em trivial}. 
\end{defn}

\begin{defn}[Sectional Curvature $\leq \alpha$] \label{defn:sec-curvature}\label{defn:seccurv}
An angled complex $X$  has {\em sectional curvature $\leq \alpha$} if the following two conditions hold.
\begin{enumerate}
\item  for each 0-cell $x$,  each regular section of the angled $\identity$-graph $\link (x)$ has curvature $\leq \alpha$, where  $\identity$ denotes the trivial group.
\item for each 2-cell $f$ of $X$, we have $\standardcurvature (f) \leq 0$, where
\[ \standardcurvature (f) =   \Biggl ( \sum_{c \in \corners(f)} \mangle (c) \Biggr)  -  \pi \Bigl(  |\partial f| - 2   \Bigr) .\]
\end{enumerate}
If $\alpha\leq 0$ then we say that $X$ has {\em nonpositive sectional curvature}. 
\end{defn}

\begin{defn}[Generalized Sectional Curvature $\leq \alpha$] \label{defn:sec-curvature}
An  angled $G$-complex $X$  has {\em generalized sectional curvature $\leq \alpha$} if:
\begin{enumerate}
\item  for each 0-cell $x$, and each $H\leq G_x$,  each regular $H$-section of the angled $G_x$-graph $\link (x)$ has curvature $\leq \alpha$, and
\item for each 2-cell $f$ of $X$, we have $\standardcurvature (f) \leq 0$.
\end{enumerate}
\end{defn}

In the case that $X$ is a proper $G$-complex, the two notions above are equivalent.  
\begin{prop}\label{prop:standard-generalized}
Let $X$ be a proper, cocompact and angled $G$-complex with sectional curvature $\leq \alpha \leq 0$. 
Let $K=K(G, X)$ an upper bound for the cardinality of $0$-cell stabilizers.  Then $X$ has generalized sectional curvature $\leq \alpha/K$. 
\end{prop}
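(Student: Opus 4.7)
The plan is to reduce the generalized sectional curvature of a regular $H$-section to the ordinary sectional curvature of the same graph regarded as a regular $\mathbf{1}$-section, and then invoke the standing hypothesis. Fix a $0$-cell $x$, a subgroup $H\leq G_x$, and a regular $H$-section $\Delta$ of $\link(x)$. Properness of the $G$-action forces $G_x$ to be finite, so $H$ is finite with $|H|\leq K$. Since $\Delta$ is $H$-cocompact and $H$ is finite, $\Delta$ is itself a finite graph; being spurless, connected and not edgeless, it is simultaneously a regular $\mathbf{1}$-section of $\link(x)$ in the sense of Definition~\ref{defn:section}.

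The core computation is the identity $\standardcurvature(H,\Delta) = \standardcurvature(\mathbf{1},\Delta)/|H|$. Starting from Definition~\ref{defn:weight} and applying orbit--stabilizer to each cell orbit (using $|H\cdot\sigma|=|H|/|H_\sigma|$), one obtains
\[
 \sum_{v\in I^0(H,\Delta)} \frac{1}{|H_v|} \;=\; \frac{|\vertices(\Delta)|}{|H|}, \qquad \sum_{e\in I^1(H,\Delta)} \frac{\pi-\mangle(e)}{|H_e|} \;=\; \frac{1}{|H|}\sum_{e\in\edges(\Delta)}\bigl(\pi-\mangle(e)\bigr).
\]
Substituting into Definition~\ref{defn:weight} and factoring out $1/|H|$ yields
\[
 \standardcurvature(H,\Delta) \;=\; \frac{1}{|H|}\Bigl( 2\pi -\pi|\vertices(\Delta)| + \sum_{e\in\edges(\Delta)}(\pi-\mangle(e))\Bigr) \;=\; \frac{\standardcurvature(\mathbf{1},\Delta)}{|H|},
\]
where the last equality is the translation $2\pi -\pi|\vertices(\Delta)|+\pi|\edges(\Delta)|-\sum_e\mangle(e)=2\pi-\pi\chi(\Delta)-\sum_e\mangle(e)$ between the two equivalent formulas for $\standardcurvature(\mathbf{1},\Delta)$.

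To conclude, the hypothesis that $X$ has sectional curvature $\leq\alpha$ gives $\standardcurvature(\mathbf{1},\Delta)\leq \alpha$. Combined with $\alpha\leq 0$ and $0<|H|\leq K$, we obtain
\[
 \standardcurvature(H,\Delta) \;=\; \frac{\standardcurvature(\mathbf{1},\Delta)}{|H|} \;\leq\; \frac{\alpha}{|H|} \;\leq\; \frac{\alpha}{K},
\]
which verifies clause (1) of the definition of generalized sectional curvature $\leq \alpha/K$. Clause (2) is identical in both definitions and so holds by hypothesis. There is no real obstacle in the argument; the only content is the orbit--stabilizer bookkeeping, and it is crucial that $\alpha\leq 0$ in the final chain of inequalities, since otherwise a smaller denominator $|H|$ would weaken rather than strengthen the bound.
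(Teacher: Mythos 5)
Your proof is correct and takes essentially the same approach as the paper: the paper's proof is the single observation that $|H|\cdot\standardcurvature(H,\Delta)=\standardcurvature(\identity,\Delta)$, and you have simply spelled out the orbit--stabilizer bookkeeping behind that identity, along with the (correct, and worth noting) remark that a regular $H$-section for finite $H$ is automatically a regular $\identity$-section.
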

\begin{proof}
Let $x$ be a $0$-cell of $X$, let $H$ be a subgroup of $G_x$, and let $\Delta$ be an $H$-invariant regular section of $\link (x)$. The result follows by observing that 
\[ |H| \cdot \standardcurvature (H, \Delta) =   \standardcurvature (\identity, \Delta). \qedhere \]
\end{proof}

\begin{defn}[$\corners$, $\aristae$]
Let $X$ be a $G$-complex and suppose that $G$ acts without inversions on $X$. For $v \in I^0(G, X)$, let $\corners (v)$ and $\aristae (v)$ denote the sets of edges and vertices of the link of $v$ in $G\backslash X$.   Let $\corners (G,  X)$ denote the disjoint union $\bigcup_{v\in I^0} \corners (v)$ and analogously for $f \in I^2(G, X)$, let $\corners (f)$ denote the subset of $\corners (G, X)$ determined by $f$.  For $e \in \corners (v)$,  let $|G_e|^{-1}$ denote $|G_\sigma|^{-1}$ where $\sigma$ is the 2-cell of $G\backslash X$ determined by $e$. For $a \in \aristae (v)$ define $|G_a|^{-1}$ analogously.
\end{defn}

\begin{rem}\label{rem:2to1}
Each element of $\aristae (v)$ is determined by a $1$-cell in $G\backslash X$. In particular, there is a natural two-to-one surjection
\begin{equation}\nonumber \bigcup_{v \in I^0} \aristae (v) \longrightarrow I^1(G, X).\end{equation} 
\end{rem}

\begin{defn} 
Let $X$ be a cocompact $G$-complex.  For $v \in I^0(G, X)$, the curvature $ \curvature
 (v)$ is defined by:
\begin{equation*}
\begin{split}
  \curvature (v)  & =     2\pi \cdot |G_v|^{-1} \  -  \sum_{e \in \aristae (v)} \pi \cdot |G_e|^{-1} \  +   \sum_{c \in \corners(v)} \Bigl(\pi - \mangle (c) \Bigr)\cdot |G_c|^{-1}.
\end{split}
\end{equation*}
The curvature of $f \in I^2(G, X)$ is defined by:
\begin{equation*}
\begin{split}
 \curvature ( f ) & =   \left [   \Biggl ( \sum_{c \in \corners(f)} \mangle (c) \Biggr)  -   \pi \Bigl(  |\partial f| - 2   \Bigr)   \right] \cdot  |G_f|^{-1}.\\
\end{split}
\end{equation*}
\end{defn}


\begin{rem}
Let $X$ be a cocompact angled $G$-complex, let $v \in I^0(G, X)$ and let $f \in I^2(G, X)$. 
Observe that $\curvature (v)$ and $\curvature (f)$ are finite real numbers. Moreover, 
\[\curvature (v) = \standardcurvature (G_x , \link (x) )\] where $x$ is a representative of $v$ in $X$, and analogously, \[ \curvature (f) = \standardcurvature (\sigma) \cdot |G_\sigma|^{-1}\] where $\sigma$ is a representative of $f$ in $X$.
\end{rem}

\section{The Combinatorial Gauss-Bonnet Formula}\label{sec:gaussbonnet}

\begin{defn}[Euler Characteristic]\label{defn:Euler-ch}
Let $X$ be a $2$-dimensional cocompact $G$-complex. The {\em Euler characteristic $\chi (G, X)$} is defined by: 
\[ \chi (G, X) = \sum_{\sigma \in I^0(G, X)}    |G_\sigma|^{-1}-\sum_{\sigma \in I^1(G, X)}  |G_\sigma|^{-1}+\sum_{\sigma \in I^2(G, X)}   |G_\sigma|^{-1}.\]
\end{defn}

\begin{thm}[Combinatorial Gauss-Bonnet] \label{thm:Gauss-Bonnet}
If $X$ is an angled and cocompact $G$-complex, then
\begin{equation}\label{eq:gauss-bonnet}
2\pi  \cdot \chi (G, X) =  \sum_{v \in I^0(G, X)}  \curvature (v) \ + \sum_{f \in I^2(G, X)}  \curvature (f).
\end{equation}
\end{thm}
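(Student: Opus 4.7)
The plan is to expand the right-hand side of~(\ref{eq:gauss-bonnet}) using the definitions of $\curvature(v)$ and $\curvature(f)$, regroup, and show that the $\mangle$-dependent pieces cancel in pairs while the remaining weights assemble into $2\pi\chi(G,X)$ via Definition~\ref{defn:Euler-ch}. Conceptually this is the orbifold analogue of the classical combinatorial Gauss--Bonnet identity for a 2-complex, with each cell $\sigma$ weighted by $|G_\sigma|^{-1}$.

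The computation rests on three identifications, each a consequence of $G$ acting without inversions. First, since the stabilizer of a 2-cell $\sigma$ fixes $\sigma$ pointwise, $|G_c|^{-1}=|G_f|^{-1}$ for every corner $c$ of $\sigma$ lying in the face orbit $f\in I^2(G,X)$; the two partitions $\corners(G,X)=\bigsqcup_{v\in I^0}\corners(v)=\bigsqcup_{f\in I^2}\corners(f)$ therefore sum to the same total of $\mangle(c)|G_c|^{-1}$. Since these angle terms appear with a minus sign in $\curvature(v)$ and a plus sign in $\curvature(f)$, they cancel. Second, each face orbit $f$ has exactly $|\partial f|$ corners in $G\backslash X$ (again because $G_\sigma$ acts trivially on $\sigma$ for any representative $\sigma$), which together with $|G_c|^{-1}=|G_f|^{-1}$ identifies $\sum_v\sum_{c\in\corners(v)}\pi|G_c|^{-1}$ with $\sum_f\pi|\partial f|\cdot|G_f|^{-1}$. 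This cancels the $-\pi|\partial f|\cdot|G_f|^{-1}$ part of $\curvature(f)$ while leaving a residual $2\pi|G_f|^{-1}$ per face orbit. Third, by Remark~\ref{rem:2to1} the map $\bigcup_{v\in I^0}\aristae(v)\to I^1(G,X)$ is 2-to-1 and respects the stabilizer weights, so the $-\pi|G_a|^{-1}$ terms in $\curvature(v)$ aggregate to $-2\pi\sum_{\tau\in I^1}|G_\tau|^{-1}$.

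After these three cancellations the right-hand side reduces to
\[2\pi\sum_{v\in I^0(G,X)}|G_v|^{-1}-2\pi\sum_{\tau\in I^1(G,X)}|G_\tau|^{-1}+2\pi\sum_{f\in I^2(G,X)}|G_f|^{-1},\]
which is $2\pi\chi(G,X)$ by Definition~\ref{defn:Euler-ch}.

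The main (and essentially only) obstacle is the bookkeeping with stabilizers: one must verify in each of the three reorganizations above that the weights match correctly, in particular that $|G_c|^{-1}=|G_f|^{-1}$ and $|\corners(f)|=|\partial f|$. Both of these reduce to the no-inversion hypothesis, which forces a 2-cell stabilizer to act trivially on the entire 2-cell and on its boundary corners; granted this, the identity is a routine rearrangement with no analytic input.
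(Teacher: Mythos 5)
Your proposal is correct and follows essentially the same route as the paper: expand $\curvature(v)$ and $\curvature(f)$, use the two-to-one surjection $\bigcup_v\aristae(v)\to I^1(G,X)$ for the edge terms, use $G_c=G_f$ together with the fact that $\{\corners(v)\}_v$ and $\{\corners(f)\}_f$ both partition $\corners(G,X)$ to match the corner sums, and invoke $|\corners(f)|=|\partial f|$ to collapse everything to $2\pi\chi(G,X)$. The only difference is presentational (you expand both curvature sums at once and cancel, the paper massages $\sum_f\curvature(f)$ into a form matching $\sum_v\curvature(v)$ before adding); the underlying identities are the same.
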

\begin{proof}
From the definition of $\curvature (v)$ and the natural two-to-one surjection of Remark~\ref{rem:2to1}, we have
\begin{equation}\label{eq:0-cell-curvatures}
\sum_{v \in I^0} \curvature (v)\ =\  \sum_{v \in I^0} 2\pi \cdot |G_v|^{-1} - \sum_{e \in I^1} 2\pi \cdot |G_e|^{-1} +   \sum_{v \in I^0} \sum_{c \in \corners(v)} 
\Bigl(\pi- \mangle (c) \Bigr)\cdot |G_c|^{-1}.
\end{equation}

Observe that
\begin{equation}\label{eq:chain-1}
\begin{split}
\sum_{ f\in I^2 } \curvature (f)  = &  
 \sum_{ f \in I^2 } 2\pi \cdot |G_f|^{-1} \  + \   \sum_{ f \in I^2 } \left [  \Biggl (  \sum_{c \in \corners(f)}   \mangle (c)  \Biggr)   -   \pi \cdot |\partial f| \right ] \cdot |G_f|^{-1}  \\
 & =
  \sum_{ f \in I^2 } 2\pi \cdot |G_f|^{-1} \   - \   \sum_{ f \in I^2 } \sum_{c \in \corners(f)}   \Bigl(\pi- \mangle (c) \Bigr)\cdot |G_f|^{-1},
\end{split}
\end{equation}
where the first equality follows from the definition of $\curvature (f)$, and the second equality holds since $|\partial f| = |\corners(f)|$ for each 2-cell $f$.

Moreover, 
\begin{equation}\label{eq:chain-2}
\begin{split}
\sum_{f \in I^2}   \sum_{c \in \corners(f)}  \Bigl(\pi- \mangle (c) \Bigr)\cdot |G_f|^{-1} 
&=
\sum_{f \in I^2}   \sum_{c \in \corners(f)}  \Bigl(\pi- \mangle (c) \Bigr)\cdot |G_c|^{-1} \\
& = 
\sum_{v \in I^0} \sum_{c \in \corners(v)} \Bigl(\pi- \mangle (c) \Bigr)\cdot |G_c|^{-1},
\end{split}
\end{equation}
where the first equality follows from $G_c = G_f$ for each $c \in \corners (f)$, and the second equality holds since $\{\corners (v) \}_{v\in I^0}$ and $\{\corners (f) \}_{f\in I^2}$ are both partitions of the set $\corners (G, X)$.

The chains of equalities~\eqref{eq:chain-1} and~\eqref{eq:chain-2} imply that
\begin{equation}\label{eq:2-cell-curvatures}
 \sum_{ f\in I^2 } \curvature (f)\ =\ \sum_{ f \in I^2 } 2\pi \cdot |G_f|^{-1}  -   \sum_{ v \in I^0 } \sum_{c \in \corners(v)} 
\Bigl(\pi- \mangle (c) \Bigr)\cdot |G_c|^{-1} .
\end{equation}

The Gauss-Bonnet formula~\eqref{eq:gauss-bonnet}  follows by adding equations~\eqref{eq:0-cell-curvatures} and~\eqref{eq:2-cell-curvatures}.
\end{proof}

\section{Euler Characteristic and $\ell^2$ Betti numbers}\label{sec:betti}

For a  $G$-complex $X$, the $p$-th $\ell^2$ Betti number $b_p^{(2)}(G, X)$ of $X$ is a element of the extended interval $[0, \infty]$. We follow the approach by W.L\"uck and we refer the reader to~\cite{Lu02} for definitions and a general exposition on the subject. The approach by  L\"uck to $\ell^2$-Betti numbers  fits the work of this paper since there are no assumptions on the $G$-action on $X$; in particular, the $G$-action is not required to be free.

\begin{thm}[Atiyah's Formula\hbox{~\cite[Thm 6.80]{Lu02}}]\label{thm:lu-2} 
For a cocompact $G$-complex $X$,  \begin{equation*}\chi (G, X)  = \sum_{p \geq 0}  (-1)^{p} \cdot b_p^{(2)}(G, X). \end{equation*}
\end{thm}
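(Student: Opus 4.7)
The plan is to realize both sides of the identity as alternating sums of von Neumann dimensions attached to the cellular $\ell^2$-chain complex of $X$, and then invoke the additivity of the extended von Neumann dimension. Throughout, I would work with the $\mathcal{N}(G)$-trace $\mathrm{tr}_{\mathcal{N}(G)}$ and the dimension function for arbitrary $\mathcal{N}(G)$-modules developed in~\cite{Lu02}.

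First I would compute $\dim_{\mathcal{N}(G)} C_p^{(2)}(X)$ for each $p$, where $C_p^{(2)}(X)$ denotes the Hilbert completion of the cellular chains with $\ell^2$-coefficients. Since $X$ is cocompact, choosing a representative $\sigma$ for each orbit $\bar\sigma \in I^p(G, X)$ yields a decomposition
\[ C_p^{(2)}(X) \;\cong\; \bigoplus_{\bar\sigma \in I^p(G,X)} \ell^2(G) \otimes_{\mathbb{C}[G_\sigma]} \mathbb{C}. \]
When $G_\sigma$ is finite, the summand is the image of the projection $e_\sigma = |G_\sigma|^{-1} \sum_{g \in G_\sigma} g \in \mathcal{N}(G)$, whose $\mathcal{N}(G)$-dimension equals $\mathrm{tr}_{\mathcal{N}(G)}(e_\sigma) = |G_\sigma|^{-1}$; when $G_\sigma$ is infinite, L\"uck's dimension of the summand vanishes, matching the convention $|G_\sigma|^{-1} = 0$. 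Summing gives
\[ \dim_{\mathcal{N}(G)} C_p^{(2)}(X) = \sum_{\bar\sigma \in I^p(G, X)} |G_\sigma|^{-1}, \]
whose alternating sum over $p$ is $\chi(G, X)$ by Definition~\ref{defn:Euler-ch}.

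Next I would pass from chains to homology using additivity of the extended dimension on short exact sequences of $\mathcal{N}(G)$-modules. The bounded $\mathcal{N}(G)$-equivariant boundary $\partial_p$ together with the first-isomorphism argument yields $\dim C_p^{(2)} = \dim \ker\partial_p + \dim \overline{\mathrm{im}\,\partial_p}$; combined with $b_p^{(2)}(G,X) = \dim \ker\partial_p - \dim \overline{\mathrm{im}\,\partial_{p+1}}$, this gives
\[ \dim_{\mathcal{N}(G)} C_p^{(2)}(X) = b_p^{(2)}(G,X) + \dim_{\mathcal{N}(G)} \overline{\mathrm{im}\,\partial_p} + \dim_{\mathcal{N}(G)} \overline{\mathrm{im}\,\partial_{p+1}}. \]
Forming the alternating sum over $p$ causes the $\overline{\mathrm{im}\,\partial}$ terms to cancel in adjacent pairs with opposite signs, leaving $\sum_{p\geq 0}(-1)^p b_p^{(2)}(G,X)$. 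Combining this with the first step delivers the desired equality.

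The main obstacle, and the reason this is a substantive theorem rather than a purely formal identity, is that when stabilizers are infinite the summand $\ell^2(G) \otimes_{\mathbb{C}[G_\sigma]} \mathbb{C}$ is not a projective Hilbert $\mathcal{N}(G)$-module and the $\ell^2$-boundary maps need not have closed image, so the naive polar-decomposition argument breaks down. The resolution is to work throughout with L\"uck's extended dimension function defined on all (algebraic) $\mathcal{N}(G)$-modules, together with its additivity on short exact sequences; this is the technical heart of~\cite[Ch.~6]{Lu02}. Once that dimension theory is accepted, the two calculations above fit together to yield the Atiyah-type formula.
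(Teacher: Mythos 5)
Your sketch is correct and follows exactly the standard argument behind L\"uck's Theorem~6.80, which is the paper's cited source for this statement (the paper gives no proof of its own beyond the citation). The two ingredients you use --- the orbit decomposition $C_p^{(2)} \cong \bigoplus_{\bar\sigma} \ell^2(G)\otimes_{\mathbb{C}[G_\sigma]}\mathbb{C}$ giving $\dim_{\mathcal{N}(G)}C_p^{(2)} = \sum_{\bar\sigma}|G_\sigma|^{-1}$ (with L\"uck's extended dimension handling the infinite-stabilizer summands), and the telescoping of the alternating sum via additivity of dimension on short exact sequences --- are precisely the steps of the proof in~\cite[Ch.~6]{Lu02}, and your remark that the extended dimension function is what makes the argument go through without needing closed images is exactly the right thing to emphasize.
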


\begin{defn}For a $G$-space $X$ and $H\leq G$, let $X^H$ denote the subspace of $X$ consisting of points fixed by all elements of $H$. For a $G$-map $X\to Y$ and $H<G$,  $f(X^H) \subseteq Y^H$; denote by $f^H$ the restriction of $f$ to $X^H \to Y^H$.
\end{defn}

\begin{thm}\cite[Thm~6.54(1)]{Lu02} \label{thm:lu-1}
Let $X$ and $Y$ be $G$-complexes, and let $f\colon X \to Y$ be a $G$-map.  Suppose for $n\geq 1$ that for each subgroup $H\leq G$ the induced map $f^H: X^H\rightarrow Y^H$ is $\C$-homologically $n$-connected, i.e., 
the map
\[ H_p^{sing}(f^H; \C)\colon H_p^{sing}(X^H; \C) \longrightarrow  H_p^{sing}(Y^H; \C) \]
induced by $f^H$ on singular homology with complex coefficients is bijective for $p<n$ and surjective for $p=n$. Then 
\[ b_p^{(2)}(G, X) = b_p^{(2)}(G, Y) \  \   \   \   \text{for $p<n$;} \]
\[ b_n^{(2)}(G, X) \geq b_n^{(2)}(G, Y) \  \   \   \   \text{for $p=n$}. \]
\end{thm}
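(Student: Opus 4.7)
The plan is to reduce the statement to a comparison of cellular chain complexes over the group von Neumann algebra $N(G)$, and then read off the conclusion from the long exact sequence of an appropriate pair. First I would translate the fixed-point hypothesis into an equivariant chain-level statement: by equivariant CW approximation in the spirit of Bredon and Elmendorf, a $G$-map $f\colon X\to Y$ whose restrictions $f^H\colon X^H\to Y^H$ are all $\C$-homologically $n$-connected can be replaced, up to $G$-homotopy, by a relative $G$-CW inclusion $X\hookrightarrow Y'$ whose relative cells appear only in dimensions strictly greater than $n$. This is the ``equivariant cellular model'' on which every subsequent step rests.

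Once such a model is in place, I would invoke the cellular description of $\ell^2$-Betti numbers: for a $G$-CW complex, $b_p^{(2)}(G,X)$ is the von Neumann dimension of the reduced $\ell^2$-homology of the complex obtained by applying $\ell^2(G)\otimes_{\Z G}-$ to the cellular chains. Applied to the relative pair $(Y',X)$ above, the relative $\ell^2$-chain complex vanishes in degrees $\leq n$, so the long exact sequence of $\ell^2$-homology of the pair yields isomorphisms $H_p^{(2)}(G,X)\cong H_p^{(2)}(G,Y)$ for $p<n$ and a surjection in degree $n$. Monotonicity and additivity of $\dim_{N(G)}$ under surjections then give the claimed equalities for $p<n$ and the inequality $b_n^{(2)}(G,X)\geq b_n^{(2)}(G,Y)$.

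The main obstacle is the translation carried out in the first step: singular $\C$-homological connectedness on each fixed-point set does not imply homotopical connectedness there, so the classical equivariant Whitehead theorem is not directly available. The correct substitute is to work with Bredon chain complexes over the orbit category $\mathcal{O}_G$: the assignment $G/H\mapsto C_*^{sing}(X^H;\C)$ defines a chain complex of contravariant functors on $\mathcal{O}_G$, and the hypothesis says precisely that $f$ induces an $n$-equivalence of such functor complexes. Identifying this Bredon chain complex with the cellular Bredon complex (via a functorial CW approximation in orbit categories) and then tensoring over each isotropy block with $\ell^2(G/G_\sigma)$ is the technical heart of the argument; once that identification is available, the passage to $\ell^2$-Betti numbers via Atiyah's formula and the long exact sequence is formal.
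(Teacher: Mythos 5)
The paper does not prove this statement; it is quoted from L\"uck~\cite[Thm~6.54(1)]{Lu02}, so there is no internal argument to compare against. Your self-corrected Bredon/orbit-category outline is the right framework — it is essentially how L\"uck organizes the proof — but two points need flagging. Your opening step is simply false: a $G$-map whose fixed-set restrictions are $\C$-homologically $n$-connected need \emph{not} factor, up to $G$-homotopy, through a relative $G$-CW pair with cells only in degrees $>n$. Already non-equivariantly, the double cover $S^1\to S^1$ is a $\C$-homology equivalence (hence ``$n$-connected for all $n$'') yet is not a homotopy equivalence, so no cell-free relative CW model exists. You correctly diagnose the problem, but the fix is not to ``substitute'' a geometric model; it is to abandon the geometric reduction entirely and work at the level of $\mathcal{O}_G$-chain complexes, which is what your second paragraph begins to do.

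The more serious gap is in the step you call ``formal''. The singular $\mathcal{O}_G$-chain complex $G/H \mapsto C^{\mathrm{sing}}_*(X^H;\C)$ is not free or projective over the orbit category, and the tensor functor computing $\ell^2$-homology is not exact, so one cannot simply tensor, take the long exact sequence of a pair, and read off von Neumann dimensions. Two ingredients are required and neither is formal: (i) a free $\mathcal{O}_G$-resolution of the singular Bredon chains — equivalently a $G$-CW approximation of the space — together with an argument that the comparison map survives the tensor functor; and (ii) L\"uck's \emph{extended} von Neumann dimension $\dim_{N(G)}$, whose additivity on short exact sequences of \emph{arbitrary} $N(G)$-modules (not merely finitely generated projective ones) is a substantive theorem and is precisely what extracts the degree-$n$ inequality $b_n^{(2)}(G,X)\geq b_n^{(2)}(G,Y)$ from the long exact sequence of the mapping cone. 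These two points are the content of L\"uck's proof and should not be elided.
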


\begin{thm}\cite[Thm~6.54(3)]{Lu02} \label{thm:lu-3}
Let $X$ be a $G$-complex. Suppose that for all $x \in X$ the stabilizer $G_x$ is finite or satisfies $b_p^{(2)}(G_x)=0$ for all $p\geq 0$. Then 
$b_p^{(2)}(G, X)=b_p^{(2)}(G, EG\times X)$ for $p\geq 0$ where $EG$ is a classifying space for $G$. 
\end{thm}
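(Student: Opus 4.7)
This statement is quoted verbatim from L\"uck's monograph, so the ``proof'' in the paper consists of the citation \cite[Thm~6.54(3)]{Lu02}. For completeness I sketch the strategy that underlies the proof.

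The plan is to compare $b_p^{(2)}(G, X)$ with $b_p^{(2)}(G, EG \times X)$ via the $G$-equivariant projection $\pi \colon EG \times X \to X$. The product $EG \times X$ is a free $G$-CW complex, so its $\ell^2$-Betti numbers are computed from the cellular $\ell^2$-chain complex in the classical way, whereas $X$ in general carries a non-free action and one must invoke L\"uck's extension of von Neumann dimension to arbitrary $\mathcal{N}(G)$-modules. Theorem~\ref{thm:lu-1} is the wrong tool here, since for the standard model of $EG$ one has $(EG)^H = \emptyset$ for nontrivial $H$; one needs instead a chain-level / dimension-theoretic argument.

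The natural approach is to filter $X$ by $G$-skeleta $X^{(n)}$ and to filter $EG \times X$ accordingly, then to compare the associated spectral sequences. Each relative quotient $X^{(n)}/X^{(n-1)}$ decomposes as a wedge over $G$-orbits of $n$-cells, and the corresponding $\ell^2$-chain module is $\ell^2(G) \otimes_{\C G_\sigma} \C$ for an orbit representative $\sigma$. The comparison reduces, orbit by orbit, to comparing the $\ell^2$-Betti numbers of a point and of $EG_\sigma$ regarded as $G_\sigma$-spaces. Under the hypothesis these agree in every degree: if $G_\sigma$ is finite then $b_p^{(2)}(G_\sigma) = 0$ for $p \geq 1$ and $b_0^{(2)}(G_\sigma) = 1/|G_\sigma|$ in both cases; if $G_\sigma$ is $\ell^2$-acyclic then all the relevant numbers vanish.

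Assembling the orbit-wise equalities through the long exact sequences in $\ell^2$-homology, together with the additivity and continuity of L\"uck's dimension function, yields $b_p^{(2)}(G, X) = b_p^{(2)}(G, EG \times X)$ in each degree. The main technical obstacle is not any single orbit calculation but the verification that the extended von Neumann dimension is well-behaved under the colimits and spectral sequence convergence that arise along the filtration; this is exactly the content of L\"uck's Chapter~6 machinery, which is why the theorem is quoted rather than reproved in the present paper.
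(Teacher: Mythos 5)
You correctly observe that the paper offers no proof of this statement: it is quoted verbatim from L\"uck's monograph with the citation \cite[Thm~6.54(3)]{Lu02}, so there is no in-paper argument to compare against. Your sketch of the underlying strategy (skeletal filtration, orbit-by-orbit comparison of $b_p^{(2)}$ for $G/G_\sigma$ versus $G\times_{G_\sigma}EG_\sigma$, assembly via additivity of the extended von Neumann dimension) is consistent with L\"uck's Chapter~6 machinery, and your remark that Theorem~\ref{thm:lu-1} cannot be applied directly to $EG\times X\to X$ because $(EG\times X)^H=\emptyset$ for nontrivial $H$ is correct and a helpful clarification of why a separate theorem is needed.
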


\begin{cor}\label{cor:betti-goal}
Let $X$ and $Y$ be connected $G$-complexes, and let 
$f\colon X \to Y$ be a $G$-map such that the induced map on singular homology $f_* \colon H_1(X, \C) \to H_1 (Y, \C)$ is surjective.  Suppose that for all $x \in X$ the stabilizer $G_x$ is finite or satisfies $b_p^{(2)}(G_x)=0$ for all $p\geq 0$, and analogously for all $y\in Y$.  Then $b_1^{(2)}(G, X) \geq b_1^{(2)}(G, Y)$.
\end{cor}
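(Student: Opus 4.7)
The plan is to apply Theorem \ref{thm:lu-1} not to $f\colon X\to Y$ directly (since the hypotheses provide no control on $f$ at $H$-fixed sets for non-trivial $H\leq G$), but to the auxiliary $G$-map
\[ \tilde f \,=\, \mathrm{id}_{EG}\times f \colon EG\times X \longrightarrow EG\times Y, \]
with $G$ acting diagonally on each side. The point of this replacement is that, since $EG$ is a free contractible $G$-CW-complex, every non-trivial subgroup $H\leq G$ has $(EG)^H=\emptyset$, so $(EG\times X)^H=(EG)^H\times X^H=\emptyset$, and similarly for $Y$.

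With this setup I would verify that $\tilde f^H$ is $\C$-homologically $1$-connected for every $H\leq G$. For $H\neq\identity$ the statement is vacuous, since both fixed-point sets are empty. For $H=\identity$, contractibility of $EG$ makes the two projections $EG\times X\to X$ and $EG\times Y\to Y$ homotopy equivalences intertwining $\tilde f$ with $f$. Connectedness of $X$ and $Y$ (and hence of the products) gives that $\tilde f$ induces a bijection on $H_0^{sing}(\,\cdot\,;\C)$, and surjectivity of $f_*$ on $H_1$ translates through the projection homotopy equivalences to surjectivity of $\tilde f_*$ on $H_1^{sing}$. Theorem \ref{thm:lu-1} with $n=1$ then yields
\[ b_1^{(2)}(G, EG\times X) \,\geq\, b_1^{(2)}(G, EG\times Y). \]

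The stabilizer hypotheses on $X$ and $Y$ are exactly what Theorem \ref{thm:lu-3} requires on each side, giving
\[ b_1^{(2)}(G, X) \,=\, b_1^{(2)}(G, EG\times X) \qquad\text{and}\qquad b_1^{(2)}(G, Y) \,=\, b_1^{(2)}(G, EG\times Y), \]
and combining these two equalities with the preceding inequality yields the desired conclusion. The only conceptual step in the proof is the passage to $EG\times-$: Theorem \ref{thm:lu-1} demands homological information about $f$ on every fixed-point set, which the hypotheses do not supply, but crossing with $EG$ renders this demand vacuous on all non-trivial fixed-point sets and reduces the $H=\identity$ case to precisely the hypothesis on $f$ given in the corollary.
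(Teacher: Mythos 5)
Your proof is correct and takes essentially the same route as the paper: replace $f$ by $\identity\times f\colon EG\times X\to EG\times Y$, observe that this map is $\C$-homologically $1$-connected (which is all Theorem~\ref{thm:lu-1} needs here, because the action on $EG\times-$ is free so the non-trivial fixed-point sets are empty), and then use Theorem~\ref{thm:lu-3} on both sides to pass back to $X$ and $Y$. The only difference is that you spell out the verification of $1$-connectivity of $\identity\times f$, which the paper states without elaboration.
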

\begin{proof}
Since $EG\times X$ and $EG\times Y$ are connected, free $G$-complexes, the map $\identity \times f \colon EG\times X \to EG\times Y$ is
$\C$-homologically $1$-connected. By Theorem~\ref{thm:lu-1}, it follows that $b_1^{(2)}(G, EG\times X)  \geq b_1^{(2)}(G, EG\times Y)$.
Since isotropy groups are finite or satisfy $b_p^{(2)}(G_x)=0$ for $p\geq 0$,  Theorem~\ref{thm:lu-3} implies that $ b_1^{(2)}(G, X) \geq  b_1^{(2)}(G, Y)$. 
\end{proof}

\begin{cor}\label{cor:betti-goal-2}
Let $X$ and $Y$ be contractible $G$-complexes, let 
$f\colon X \to Y$ be a $G$-map, and suppose that for all $x \in X$ the stabilizer $G_x$ is finite or satisfies $b_p^{(2)}(G_x)=0$ for all $p\geq 0$, and analogously for all $y\in Y$.  Then $b_p^{(2)}(G, X) = b_p^{(2)}(G, Y)$ for $p\geq 0$. In particular, $\chi (G, X) = \chi (G, Y)$.
\end{cor}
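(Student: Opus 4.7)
The plan is to reduce to the known case of free $G$-spaces via the standard crossing-with-$EG$ trick, and then apply Theorems~\ref{thm:lu-1} and~\ref{thm:lu-3} in sequence. Specifically, I would pass from $X$ and $Y$ to the $G$-complexes $EG \times X$ and $EG\times Y$ with the diagonal $G$-action, and study the $G$-map $\identity \times f \colon EG \times X \to EG \times Y$.

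The key observation is that for every subgroup $H\leq G$, the fixed point set $(EG\times X)^H$ is either all of $EG \times X$ (if $H$ is trivial) or empty (since the $G$-action on $EG$ is free), and likewise for $Y$. In either case the restricted map $(\identity \times f)^H$ is a homology isomorphism in all degrees: it is the identity between empty spaces when $H$ is nontrivial, and a map between two contractible CW complexes when $H=\identity$ (the product of two contractible spaces is contractible). Thus the hypothesis of Theorem~\ref{thm:lu-1} is satisfied for every $n$, and applying that theorem with $n=p+1$ yields
\[ b_p^{(2)}(G, EG\times X) \ = \ b_p^{(2)}(G, EG\times Y) \qquad \text{for every } p\geq 0.\]

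To descend back from the crossed product to $X$ and $Y$, I would invoke Theorem~\ref{thm:lu-3}. The stabilizer hypothesis on $X$ is precisely what is required, giving $b_p^{(2)}(G, X) = b_p^{(2)}(G, EG \times X)$, and the analogous statement for $Y$. Combining these with the equality from the previous paragraph yields $b_p^{(2)}(G, X) = b_p^{(2)}(G, Y)$ for all $p\geq 0$. The final statement on Euler characteristics is then immediate from Atiyah's formula (Theorem~\ref{thm:lu-2}): the alternating sums of the $\ell^2$-Betti numbers agree term-by-term.

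The main difficulty is conceptual rather than computational: the point is to notice that crossing with $EG$ simultaneously trivializes the hypothesis of Theorem~\ref{thm:lu-1} in every degree (because all nontrivial fixed point sets vanish) and is invisible at the level of $\ell^2$-Betti numbers by Theorem~\ref{thm:lu-3}. Once this trick is in place, no genuine computation is required, and it is worth noting that Theorem~\ref{thm:lu-1} as stated only yields the inequality $\geq$ at the top degree, which is why we must apply it with $n$ taken one step beyond the degree of interest rather than trying to pass to a limit.
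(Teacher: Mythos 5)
Your proof is correct and follows essentially the same route as the paper: cross with $EG$ to trivialize the fixed-point hypothesis of Theorem~\ref{thm:lu-1}, then descend via Theorem~\ref{thm:lu-3} and conclude with Atiyah's formula. The only cosmetic difference is that the paper invokes the K\"unneth formula to see that $\identity\times f$ is a homology equivalence in all degrees, whereas you argue directly that both products are contractible; these are interchangeable, and your explicit check that $(EG\times X)^H=\emptyset$ for nontrivial $H$ is exactly what the paper's appeal to freeness is suppressing.
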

\begin{proof}
Since $EG\times X$ and $EG\times Y$ are free $G$-complexes, 
the K\"unneth formula implies that the map $\identity \times f \colon EG\times X \to EG\times Y$ is $\C$-homologically $n$-connected for all $n$. Theorem~\ref{thm:lu-1} implies $b_p^{(2)}(G, EG\times X)  = b_p^{(2)}(G, EG\times Y)$ for $p\geq 0$.
Since isotropy groups are finite or satisfy $b_p^{(2)}(G_x)=0$ for $p\geq 0$,  Theorem~\ref{thm:lu-3} implies that $ b_p^{(2)}(G, X) =  b_p^{(2)}(G, Y)$ for $p\geq 0$. 
\end{proof}

\section{Equivariant Immersions} \label{sec:folding}

\begin{defn}
Let $X$ be a $G$-complex. Define \begin{itemize}
\item $\bnd (G, X)$ as the subset of $v\in I^0(G, X)$ such that $\link (x, X)$ is a single vertex or has spurs for a representative $x \in X$ of $v$, and
\item $\isolated (G, X)$ as the subset of $v\in I^0(G, X)$ such that $\link (x, X)$ has at least one vertex of valence zero for a representative $x \in X$ of $v$.
\end{itemize}
\end{defn}

\begin{defn}[Essential Path]
A path $P\to Y$ is {\em essential} if the  lift to the universal cover $P\to \widetilde Y$ is not closed.
\end{defn}

\begin{thm}[Collapsing Essential Paths]\label{thm:collapsing-essential-paths}
Let $X$ be a simply-connected $H$-complex, let $Y\to X$ be an $H$-equivariant immersion, and let $P\to Y$ be an essential path.  Suppose that $P\to Y \to X$ is a closed path that is simple in the sense that it embeds except at its endpoints.  Then $Y\to X$ factors as a composition $Y\to Z \to X$ of $H$-equivariant maps where $Y\to Z$ is $\pi_1$-surjective, $Z\to X$ is an immersion, and the path $P\to Y\to Z$ is closed and null-homotopic. Moreover we can choose $Z$ such that the following hold:
\begin{enumerate}
\item \label{thm:collapsing-1}
 If $Y$ is connected then $Z$ is connected. 
\item \label{thm:collapsing-2} 
If $Y$ is $H$-cocompact then $Z$ is $H$-cocompact.
\item \label{thm:collapsing-3} 
$|\bnd (H, Z) \cup \isolated (H, Z)| \leq |\bnd (H, Y) \cup \isolated (H, Y)|$.
\end{enumerate}
\end{thm}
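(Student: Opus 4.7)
The plan is to kill the loop $P$ by attaching a disk diagram along its $H$-orbit and then to restore the immersion property by equivariant folding. Since $X$ is simply-connected and $P\to X$ is a closed path, it bounds a disk diagram $D\to X$; choose $D$ to be of minimal area (hence with no cancellable pairs, so that $D\to X$ is a near-immersion). Form the pushout
$$
Y' \;=\; \Bigl(\,Y \;\sqcup\; \textstyle\bigsqcup_{h\in H} D_h\Bigr)\big/\!\sim,
$$
where $D_h$ is a copy of $D$ glued to $Y$ along $hP$ via the translated boundary map. The action of $H$ on $Y$ extends to $Y'$ by $g\cdot D_h=D_{gh}$, and the map $Y'\to X$ is defined on $Y$ as before and on each $D_h$ via the disk-diagram map. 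The path $P\to Y\to Y'$ now bounds $D_1$ and is therefore null-homotopic, and the map $Y\to Y'$ is $\pi_1$-surjective because $Y'$ is obtained from $Y$ by attaching $2$-cells.

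The map $Y'\to X$ can fail to be an immersion only at $0$-cells along $HP$, where the link in $Y'$ is obtained from the link in $Y$ by adjoining the edges contributed by the attached disks; these may coincide with each other, or with pre-existing edges, in the link of the image in $X$. I would apply $H$-equivariant Stallings-style folds: whenever two edges (or vertices) of $\link(z,Y')$ have the same image in $\link(f(z),X)$, identify them equivariantly across the $H$-orbit of $z$. Since $Y'$ is locally finite and the folds are driven by collisions in the fixed target $X$, the process terminates in an $H$-equivariant immersion $Z\to X$, and each fold preserves $\pi_1$-surjectivity and leaves $P\to Y\to Z$ null-homotopic. For property~\eqref{thm:collapsing-1}, connectedness of $Y$ together with each $D_h$ being attached along the connected subcomplex $hP\subseteq Y$ makes $Y'$ connected, and folding as a quotient preserves connectedness. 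For property~\eqref{thm:collapsing-2}, if $Y$ is $H$-cocompact then $HP$ is as well, so adjoining one $H$-orbit of compact disks keeps $Y'$ cocompact, and folding (which only identifies cells equivariantly) preserves $H$-cocompactness.

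Property~\eqref{thm:collapsing-3} is the heart of the argument. At a $0$-cell of $HP$, attaching disks only adjoins edges to the link; adding an edge can only eliminate spurs (by raising the valence of a valence-one vertex) or eliminate isolated vertices (by giving them neighbours), and it cannot create such defects. Interior $0$-cells of the disks $D_h$ are new in $Y'$, but since $D\to X$ was chosen to be a near-immersion, the link of such an interior $0$-cell embeds as a connected spurless subgraph of the link of its image in $X$, so it contributes nothing to $\bnd$ or $\isolated$. Folding only identifies edges and vertices of links, and this can again only collapse spurs and isolated vertices into higher-valence features. Consequently $|\bnd(H,Z)\cup\isolated(H,Z)|\le|\bnd(H,Y)\cup\isolated(H,Y)|$. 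The main obstacle is keeping tight bookkeeping through the folding stage: one must rule out the scenario in which folding merges two previously distinct $H$-orbits of $0$-cells into a single orbit whose amalgamated link inherits a spur or isolated vertex that was not present in either preimage. This is handled by exhibiting, at each fold, an injection from the bad orbits of the folded complex into the bad orbits of the previous one, so that the count only decreases through the entire construction.
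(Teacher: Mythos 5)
Your overall strategy coincides with the paper's: attach disk diagrams along the $H$-orbit of $P$ via a pushout and then apply an equivariant Stallings fold to recover an immersion, with properties \eqref{thm:collapsing-1} and \eqref{thm:collapsing-2} following from the pushout and folding lemmas. Where the proposal has a genuine gap is property \eqref{thm:collapsing-3}. The claim that ``folding only identifies edges and vertices of links, and this can again only collapse spurs and isolated vertices into higher-valence features'' is not true in general: a single fold \emph{can} create a spur or an isolated vertex. For instance, if $\link(v)$ is a cycle $a-b-c-d-a$ and the two edges $ab$ and $bc$ (corners of two $2$-cells mapping to the same corner of $X$) are folded, then $a$ and $c$ are identified and the resulting graph is a bigon on $\{a{=}c,\,d\}$ together with a pendant edge to $b$; the vertex $b$ now has valence one. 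So the fold has manufactured a spur from a spurless, non-edgeless link. You then acknowledge exactly this difficulty --- ``one must rule out the scenario in which folding merges two previously distinct $H$-orbits \dots This is handled by exhibiting, at each fold, an injection'' --- but no such injection is constructed, and because of the counterexample above it is not clear one exists fold-by-fold. That missing injection is the actual content of \eqref{thm:collapsing-3}.

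The paper avoids fold-by-fold bookkeeping entirely. It argues backwards from the final immersion $Z$: given a bad $0$-cell $z\in Z$ with an isolated vertex or spur-terminus $t$ in $\link(z,Z)$, one uses surjectivity of $Z'\to Z$ to lift the $1$-cell realizing $t$ to $Z'$, and then exploits the pushout structure: that $1$-cell either has a preimage in $Y$, or has a preimage on the boundary of some copy of $D$ (since non-boundary $1$-cells of a disk lie on $2$-cells), and boundary $1$-cells of $D$ lie on $P$ and hence are identified with $1$-cells of $Y$. In either case a $0$-cell of $Y$ with an isolated vertex or spur in its link is found mapping to $z$. This yields that the image of $\bnd(H,Y)\cup\isolated(H,Y)$ under $Y\to Z$ contains $\bnd(H,Z)\cup\isolated(H,Z)$, which gives the cardinality inequality directly. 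To close the gap in your proposal, you should replace the fold-by-fold bookkeeping with this backward lifting argument through the surjection $Z'\to Z$.
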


The strategy of the proof is as follows: A disk diagram $D\to X$ with boundary path $P\to X$ is equivariantly attached to $Y$ to obtain an $H$-complex $Z'$ and $H$-maps $Y\to Z'\to X$; this is performed using pushouts of equivariant immersions. Then the complex $Z'$ is equivariantly folded to obtain $H$-maps $Z'\to Z\to X$ such that $Z\to X$ is an immersion. The proof of the theorem requires some preliminary results.

\subsection{Equivariant Folding and Pushouts}

\begin{lem}[Equivariant Folding]\label{lem:folding}
Let $W\to X$ be a $G$-map with $W$ locally finite. Then $W\to X$ factors as $W\to Z  \to X$ where $Z \to X$  is an immersion, $W \to Z $ is a surjection and a $\pi_1$-surjection, and all maps are $G$-invariant.
\end{lem}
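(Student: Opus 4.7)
My plan is to construct $Z$ by iteratively performing $G$-equivariant fold moves on $W$ and passing to the colimit. The local finiteness of $W$ is what ensures the resulting quotient is a well-defined cell complex, in the absence of any cocompactness hypothesis.

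First, I define the elementary equivariant fold moves. The map $W \to X$ fails to be an immersion at a $0$-cell $w$ precisely when the induced map on $\link(w, W)$ is not injective, which gives two fold types: (F1) if two $1$-cells $e_1, e_2$ of $W$ share a common $0$-cell and map to the same $1$-cell of $X$ with matching orientations, identify $e_1 \sim e_2$; (F2) if two $2$-cells $f_1, f_2$ of $W$ share a common $1$-cell and map compatibly to the same $2$-cell of $X$, identify $f_1 \sim f_2$. To make each fold $G$-equivariant, one passes to the quotient by the smallest $G$-invariant cellular equivalence relation containing the identification. Since $G$ acts without inversions on $W$, this produces a legitimate quotient cell complex with an inherited $G$-action without inversions and a $G$-map to $X$ factoring $W \to X$.

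Next, iterate and take the colimit. Let $\mathcal{F}$ be the directed system of $G$-equivariant surjective quotients $W \to W_\alpha$ equipped with a $G$-map $W_\alpha \to X$ factoring $W \to X$, ordered by factorization, and let $Z$ be its colimit. Since $W$ is locally finite, each $\link(w, W)$ is finite, so only finitely many fold identifications are ever visible in that link and the colimit stabilizes on each cell; hence $Z$ is a well-defined cell complex. By maximality no further fold is available in $Z \to X$, so the induced map on each link at a $0$-cell is injective and $Z \to X$ is an immersion. The map $W \to Z$ is a $G$-equivariant surjection by construction. For $\pi_1$-surjectivity, each elementary fold is $\pi_1$-surjective: an (F1) fold is a Stallings fold of graphs (quotient by collapsing an edge-pair to a single edge, which only decreases the rank of $\pi_1$), while an (F2) fold leaves the $1$-skeleton unchanged and either identifies two $2$-cells with equal attaching maps (an isomorphism on $\pi_1$) or identifies distinct $2$-cells through a common edge (introducing an extra relation). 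This property passes to the colimit, since any loop in $Z$ has compact image lying in some finite-stage quotient $W_\alpha$, from which it lifts back along the $\pi_1$-surjection $W \to W_\alpha$.

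The step I expect to be most subtle is ensuring the equivariant quotient remains a sensible cell complex throughout the process. Equivariance forces one to identify the entire $G$-orbit of a fold pair at once, and when the stabilizers of the cells involved act nontrivially one must verify this does not cause a cell to be identified with itself in an incompatible way. The no-inversions convention, together with local finiteness of $W$—which keeps the identifications visible near any given cell finite in number—is what makes the quotient a legitimate $2$-complex at each stage and the colimit well-behaved.
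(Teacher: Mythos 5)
Your proof is correct but takes a genuinely different route from the paper. The paper reduces to the known non-equivariant compact case: it filters $W$ by compact subcomplexes $W_i$, folds each $W_i\to X$ to an immersion $Z_i\to X$, takes the direct limit $Z=Z_\infty$, and only afterwards retrofits the $G$-action on $Z$ by observing that each translated filtration $\{hW_i\}$ of $W$ induces a self-map of the limit; local finiteness (of $X$, in fact) is what makes the limit stabilize at each link, yielding the immersion property. You instead build $G$-equivariance into the elementary fold moves from the outset (identifying an entire orbit of a fold pair at once) and realize $Z$ as the colimit over the poset $\mathcal F$ of all $G$-equivariant quotients of $W$ factoring $W\to X$, getting the immersion from maximality of the colimit rather than from a stabilization argument. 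This is conceptually cleaner --- no retrofit of equivariance --- at the cost of having to check that $\mathcal F$ is directed (pushouts of quotients over $W$ exist and still map to $X$) and that its supremum is again a cellular equivalence relation, facts the paper gets for free from the compact non-equivariant case. One small fix to your writeup: the colimit is automatically a combinatorial complex, being the quotient of $W$ by a cellular equivalence relation, so the appeal to local finiteness of $W$ for well-definedness of $Z$ addresses a non-issue; moreover, as stated it is not sound, since finiteness of $\link(w,W)$ does not control $\link(z,Z)$ when infinitely many vertices of $W$ map to $z$ --- the relevant bound comes from injecting $\link(z,Z)$ into a link of $X$, exactly as in the paper.
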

\begin{proof}
The statement is well-known when $W$ is compact and $G$ is trivial. In general, let $W= \bigcup_i W_i$ be a filtration by compact sets.  For each $i$, let $W_i\to Z_i \to X$ be a factorization such that $Z_i\to X$ is an immersion and $W_i\to Z_i$ is surjective and $\pi_1$-surjective. Observe that for $i<j$ there is a commutative diagram on the left below.

\[\begin{array}{ccc} 
   W_i & \subseteq & W_j \\
\downarrow &  & \downarrow \\
   Z_i & \to & Z_j
 \end{array} \hspace{3cm}  
\begin{array}{cccccc}
  U_1 & \to & U_{2} 	& \to 	& U_{3}  & \\
  \downarrow &  		& \downarrow &  & \downarrow & \cdots \\
  V_1 & \to & V_{2} & \to & V_{3} & \\
\end{array}\]

For a sequence $U_1\to U_2 \to U_3 \to \cdots$, we define $U_\infty$ to be the direct limit. Specifically, $U_\infty$ is the combinatorial complex whose $p$-cells are tales of $p$-cells $c_i\to c_{i+1} \to c_{i+2} \to \cdots$ for some $i\geq 1$, where  two tales are equivalent if they are eventually the same.  If $\{U_i\}$ is a filtration of $U$, then $U_\infty$ equals $U$.  A morphism $\{U_i\} \to \{V_i \}$  between two such sequences is a commutative diagram as on the right above. Any such a morphism induces a map $U_\infty \to V_\infty$. Observe that if the vertical arrows of the morphism are surjective, then the map $U_\infty \to V_\infty$ is surjective. 

By surjectivity, the morphism $\{W_i\} \to \{Z_i\}$ induces a surjective map $W_\infty \to Z_\infty$. 
As above, $W_\infty$ equals $W$. Let $X_i=X$ and let $X_1\to X_2 \to X_3 \to \cdots$ be the identity sequence, and note that $X_\infty$ equals $X$.  It follows that we have a maps $W\to Z \to X$ where $Z=Z_\infty$. 

Let us verify that $Z\to X$ is an immersion. Since each $Z_i \to X$ is an immersion and $X$ is locally finite, $Z$ is locally finite. In particular, for each $0$-cell $z \in Z$ there is an index $i$ for which $Z_i \to Z$ maps $z_i \mapsto z$ and $\link (z_i)$ maps isomorphically onto $\link (z)$; the factorization $Z_i\to Z\to X$ shows that $Z\to X$ is locally injective at $z$. 

To see that $Z$ is an $H$-complex and $W\to Z$ is an $H$-map, we observe that for $h\in H$, there is another filtration $W=\bigcup_i hW_i$.  For each $i$ there is $i_*$ such that $hW_i \subseteq W_{i_*}$ and this induces a map $h_i \colon Z_i \to Z_{i_*}$; it follows that $\{h_i\}$ induces a map $h:Z\to Z$ defining an action of $H$ onto $Z$. By construction, the action commutes with the map $W \to Z$. 

To see that $W\to Z$ is $\pi_1$-surjective, observe that each closed path $\sigma$ in $Z$ occurs in some $Z_i$, and is thus homotopic to (the image of a) closed path $\sigma_i\to W_i\to W$
\end{proof}

\begin{lem}[Pushouts of Equivariant Maps]\label{lem:pushout}
Let $\phi\colon C\to A$ and $\psi\colon C\to B$ be $G$-maps of complexes. Then there is a $G$-complex $Z$ and $G$-maps $\imath\colon A\to Z$ and $\jmath\colon B \to Z$ such that $\imath\circ \phi = \jmath \circ \psi $. The pushout $(Z, \imath, \jmath)$ is universal in the sense that equivariant maps $A\to X$ and $B\to X$  for which the diagram below commutes induce a unique equivariant map $Z\to X$ also making the diagram commute. 
\begin{equation*}
 \xymatrix{
			   								  & A  \ar[rd]_{\imath}	\ar[rrrd]			    &  	 				&&     	\\
C \ar[rd]_\psi \ar[ru]^{\phi}  &  				  			    &   Z \ar@{-->}[rr] 		&& X	\\
			  			  					 & B \ar[ru]^{\jmath} \ar[rrru] 	&    	&&   \\
}
 \end{equation*}
 Moreover, if $A$ and $B$ are $G$-cocompact (proper) then $Z$ is $G$-cocompact (respectively proper). If $A$ is connected and $\psi (C)$ intersects all connected components of $B$  then $Z$ is connected.
\end{lem}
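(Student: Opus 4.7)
The plan is to realize $Z$ as the categorical pushout in the category of topological spaces, equipped with the induced $G$-action and combinatorial structure.  Explicitly, set $Z := (A\sqcup B)/R$ where $R$ is the equivalence relation generated by $\phi(c)\sim \psi(c)$ for $c\in C$, and let $\imath$ and $\jmath$ be induced by the inclusions $A\hookrightarrow A\sqcup B$ and $B\hookrightarrow A\sqcup B$.  Because $\phi$ and $\psi$ are combinatorial maps between combinatorial complexes, $R$ identifies $p$-cells with $p$-cells compatibly with their attaching maps, so $Z$ inherits a combinatorial $2$-complex structure whose cells are equivalence classes of cells of $A\sqcup B$.

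Next I would verify that the diagonal $G$-action on $A\sqcup B$ descends to $Z$.  For any $g\in G$ and $c\in C$, equivariance of $\phi$ and $\psi$ gives $g\phi(c)=\phi(gc)$ and $g\psi(c)=\psi(gc)$, so the relation $R$ is $G$-invariant.  Since $G$ acts without inversions on $A$ and $B$, any inversion in $Z$ would pull back to one in $A\sqcup B$, so the quotient action is also without inversions.  The maps $\imath$ and $\jmath$ are then $G$-equivariant and satisfy $\imath\circ\phi=\jmath\circ\psi$.  For the universal property, given equivariant maps $\alpha\colon A\to X$ and $\beta\colon B\to X$ with $\alpha\circ\phi=\beta\circ\psi$, define $\Psi\colon Z\to X$ by $\Psi[a]=\alpha(a)$ and $\Psi[b]=\beta(b)$; the coincidence hypothesis makes $\Psi$ well-defined on $R$-equivalence classes, equivariance is immediate, and $\Psi$ is the unique such map because $\imath(A)\cup\jmath(B)=Z$.

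For the remaining assertions, if $A$ and $B$ are $G$-cocompact with finite cell fundamental domains $F_A\subset A$ and $F_B\subset B$, then $\imath(F_A)\cup\jmath(F_B)$ is a finite set of cells whose $G$-translates cover $Z$, so $Z$ is $G$-cocompact.  For properness, the stabilizer of a cell $[\sigma]$ of $Z$ permutes the $R$-equivalence class of $\sigma$ in $A\sqcup B$; provided this class is finite, $G_{[\sigma]}$ is an extension of a finite subgroup of that permutation group by the intersection of the finite stabilizers of the individual preimages, hence finite.  Finally, if $A$ is connected and $\psi(C)$ meets every component of $B$, then every component of $\jmath(B)$ is identified with $\imath(A)$ through some point $\imath(\phi(c))=\jmath(\psi(c))$, which forces $Z$ to be connected.

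The step I expect to require the most care is properness, since in principle a cell of $Z$ can have a larger stabilizer than the corresponding cells of $A$ or $B$ due to $R$-identifications accumulating through $C$.  This reduces to verifying that the $R$-equivalence class of a single cell of $A\sqcup B$ is finite, which will hold in the setting in which the lemma is applied: $C$ is itself $G$-cocompact (and proper), and the fibers of $\phi$ and $\psi$ over any cell consist of finitely many cells of $C$, so the zig-zag of identifications only reaches finitely many cells of $A\sqcup B$.
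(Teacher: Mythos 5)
Your construction — the quotient of $A\sqcup B$ by the relation generated by $\phi(c)\sim\psi(c)$, together with the induced $G$-action, the universal property, cocompactness via fundamental domains, and connectedness via identification points — is exactly the route the paper takes; the paper's proof is essentially a single sentence deferring the ``routine'' details, and you have filled most of them in correctly.

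The point you rightly single out, properness, is where there is a genuine gap, and the justification you supply does not close it. You assert that the $R$-class of a cell is finite because $C$ is $G$-cocompact and proper and $\phi,\psi$ have finite fibers, ``so the zig-zag of identifications only reaches finitely many cells.'' That inference is false. Take $G=\mathbb Z$ acting by translation on $A=B=$ a subdivided line, and let $C$ be two disjoint copies of the line with $\phi$ the identity on the first copy and the shift-by-one on the second, while $\psi$ is the identity on both. Then $A,B,C$ are all proper and cocompact and every fiber of $\phi$ and $\psi$ has two elements, yet $a_0\sim b_0\sim a_1\sim b_1\sim\cdots$, so $Z$ has a single $0$-cell whose stabilizer is all of $\mathbb Z$. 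Thus the literal moreover-clause of the lemma (properness of $Z$ from properness of $A$ and $B$ alone) is not correct, and neither is your proposed reason. What actually makes the argument work where the lemma is invoked (Theorem~\ref{thm:collapsing-essential-paths}) is that $\phi$ is an embedding: $\bigcup_H P\to\bigcup_H D$ is the inclusion of the boundary circle of a disk. When $\phi$ is injective the zig-zag terminates after one step, so the $R$-class of a cell $\tau$ of $B$ is $\{\tau\}\cup\phi(\psi^{-1}(\tau))$, and finiteness of $\psi^{-1}(\tau)$ follows from properness of $B$ together with compactness of $P$. That is the hypothesis you should be invoking.

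A smaller but real point: your claim that the quotient action on $Z$ is without inversions because any inversion ``would pull back to one in $A\sqcup B$'' does not hold. If $g[\sigma]=[\sigma]$ one only learns that $g\sigma$ lies in the same $R$-class as $\sigma$, not that $g\sigma=\sigma$, so an inversion in $Z$ need not lift. This issue is also silently passed over in the paper; in the applications it can be handled by subdividing if necessary, but the blanket assertion requires an argument.
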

\begin{proof}
The construction of  the pushout of $\phi$ and $\psi$ is standard and it is briefly described. Let $Z$ be the combinatorial complex obtained by taking the quotient of the disjoint union of $A$ and $B$ by the relation $\phi(\sigma)=\psi(\sigma)$ for $\sigma$ in $C$. The resulting complex admits a natural $G$-action.  The statements on inheritance of cocompactness, properness and connectedness are routine and details are left to the reader. 
\end{proof}

\subsection{Proof of Theorem~\ref{thm:collapsing-essential-paths}}

Since $X$ is simply-connected, there is a near-immersion $D\to X$ of a disk diagram with boundary path $P\to X$. Since  $P\to X$ is a simple closed path, $D$ is homeomorphic to an Euclidean disk. 
 
Let $\bigcup_H P$ denote the disjoint union of copies of $P$, one for each element of $H$. Define  $\bigcup_HD$ analogously. By Lemma~\ref{lem:pushout}, let $Z'$ be the pushout of the natural $H$-maps $\bigcup_H P \to \bigcup_H D$ and $\bigcup_H P \to Y$. By the universal property of the pushout, the immersion $Y\to X$ and the natural map $\bigcup_H D \to X$ induce an $H$-map $Z'\to X$. By Lemma~\ref{lem:folding}, let $Z\to X$ be the $H$-equivariant immersion obtained after an equivariant folding of $Z'\to X$. 
 We refer to the following  commutative diagram.
\begin{equation*}
\xymatrix{
			   			  & Y  \ar[rd] \ar[rrrd]    &  	 &    & 	\\
\bigcup_H P \ar[rd] \ar[ru]  &  				  			    &   Z' \ar[r] & Z\ar[r] &  X  	\\
			  			  & \bigcup_H  D \ar[ru] \ar[rrru] 	&      	&    &
}
\end{equation*}

The main conclusions of Theorem~\ref{thm:collapsing-essential-paths} are proved in the four lemmas below.

\begin{lem}
$Y\to Z'\to Z$ is $\pi_1$-surjective.
\end{lem}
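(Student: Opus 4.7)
The plan is to factor the map $Y\to Z$ as $Y\to Z'\to Z$ and establish $\pi_1$-surjectivity at each stage. The second stage $Z'\to Z$ is $\pi_1$-surjective by the conclusion of Lemma~\ref{lem:folding} applied to the $H$-equivariant folding $Z'\to Z$. Hence the work reduces to showing that $Y\to Z'$ is $\pi_1$-surjective.

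For the first stage, the essential observation is that $Z'$ is obtained from $Y$ by attaching, for each $h\in H$, a copy $hD$ of the disk diagram along its boundary path $hP\to Y$; this is precisely the content of the pushout description of $Z'$. Since $P\to X$ is a simple closed path, the paragraph preceding the lemma tells us $D$ is homeomorphic to a Euclidean disk, and in particular each $hD$ is simply-connected. Unravelling the pushout, one sees that any loop $\gamma$ in $Z'$ based at a point of $Y$ can be homotoped off the interior of every attached disk: whenever $\gamma$ enters some $hD$, the portion inside $hD$ may be replaced, using simply-connectedness of $hD$ rel boundary, by a path along the attaching boundary $hP$, which lies in $Y$. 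Thus every element of $\pi_1(Z')$ is represented by a loop in $Y$, i.e., the induced map $\pi_1(Y)\to \pi_1(Z')$ is surjective. Equivalently, this is a direct application of Seifert--van Kampen to the pushout $Z' = Y \cup_{\bigsqcup_H hP} \bigsqcup_H hD$: since each attached piece is simply-connected, it contributes only relations and no new generators to $\pi_1$.

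Composing, $Y\to Z'\to Z$ is $\pi_1$-surjective, which is what was to be shown. The only technical care required is to handle disks that may be attached to different connected components of $Y$ component-by-component, and to choose a basepoint in $Y$ and connecting paths appropriately; but since each $hP$ is connected and lies in a single component of $Y$, and the disks merely kill loops rather than create them, this presents no real obstacle. There is no need to invoke the specific structure of $X$, of the $H$-action, or of the essential path $P$ beyond simple-connectivity of the disk diagrams $hD$.
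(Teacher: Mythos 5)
The overall structure of your argument matches the paper's: both reduce to showing $Y\to Z'$ is $\pi_1$-surjective (the step $Z'\to Z$ being handled by Lemma~\ref{lem:folding}), and both then argue that loops in $Z'$ can be pushed off the interiors of the attached disks. However, you gloss over a genuine subtlety that the paper addresses explicitly: the map $Y \to Z'$ need not be injective. Since $P \to Y$ is merely a path whose image in $X$ is a closed loop, the two endpoints of $P\to Y$ may be \emph{distinct} $0$-cells of $Y$; the pushout construction then identifies them (because the boundary cycle of $D$ closes up). Consequently, the image $Y'$ of $Y$ in $Z'$ is a quotient of $Y$, and $\pi_1 Y \to \pi_1 Y'$ can fail to be surjective. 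After you homotope a loop $\gamma$ in $Z'$ off the disk interiors, you land in $Y'$, not literally in $Y$, and the resulting loop may traverse the newly created identification point.

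This also undercuts the Seifert--van Kampen phrasing ``since each attached piece is simply-connected, it contributes only relations and no new generators to $\pi_1$'' and the summary ``the disks merely kill loops rather than create them.'' Attaching a disk along a \emph{non-closed} path $hP\to Y$ is a two-step process: first identify the two endpoints (which \emph{does} create a new generator in $\pi_1$ whenever they are distinct and in the same component), then fill in the resulting circle with a disk (which kills it). Moreover, because $P\to Y$ is not a subcomplex inclusion, the van Kampen theorem does not apply verbatim to this pushout; one either passes to a mapping cylinder or, as the paper does, factors through the intermediate space $Y'$. The paper's extra sentence --- that $\pi_1 Y'$ is generated by the image of $\pi_1 Y$ together with the closed loops coming from the $H$-translates of $P$, and these loops become null-homotopic once the disks are attached --- is precisely the content missing from your argument. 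Your conclusion is correct and your strategy is the right one, but you should make this intermediate step explicit rather than asserting that the loop lands in $Y$.
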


\begin{proof}
By Lemma~\ref{lem:folding},  $Z'\to Z$ is $\pi_1$-surjective. 
Hence it remains to prove that $Y\to Z'$ is $\pi_1$-surjective.  
Let $Y'$ denote the image of $Y$ in $Z'$, and note that $Y'\to Z'$ is $\pi_1$-surjective since the complement of $Y'$ in $Z'$ is a collection of disjoint open disks.
Observe that $\pi_1 Y'$ is generated by the image of $\pi_1 Y \to \pi_1 Y'$ together with closed paths corresponding to $H$-translates of $P \to Y\to Z'$. 
Since these additional paths become null-homotopic by the addition of the $H$-translates of $D\to Z'$, the result follows. 
\end{proof}

\begin{lem}
If $Y$ is $H$-cocompact then $Z$ is $H$-cocompact.  If $Y$ is connected then $Z$ is connected. 
\end{lem}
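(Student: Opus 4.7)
The plan is to apply Lemma~\ref{lem:pushout} directly to the pushout defining $Z'$, and then transfer both properties from $Z'$ to $Z$ via the surjection $Z' \to Z$ produced by Lemma~\ref{lem:folding}.

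For the cocompactness claim, I would first observe that $\bigcup_H D$ is $H$-cocompact: the group $H$ acts on this disjoint union by permutation of copies (freely and transitively on the set of copies), and the quotient is the single compact disk diagram $D$. Together with the hypothesis that $Y$ is $H$-cocompact and the fact that the two structure maps $\bigcup_H P \to Y$ and $\bigcup_H P \to \bigcup_H D$ are $H$-equivariant, the inheritance statement in Lemma~\ref{lem:pushout} implies $Z'$ is $H$-cocompact. Since $Z'\to Z$ is an $H$-equivariant surjection by Lemma~\ref{lem:folding}, the complex $Z$ is $H$-cocompact as well.

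For the connectedness claim, I would invoke the second inheritance assertion in Lemma~\ref{lem:pushout}. Taking $A=Y$, $C=\bigcup_H P$, and $B=\bigcup_H D$ with the natural structure maps $\phi\colon C\to A$ and $\psi\colon C\to B$, the complex $A=Y$ is connected by hypothesis. By construction the $h$-indexed copy of $P$ in $C$ is mapped by $\psi$ to the boundary of the $h$-indexed copy of $D$ in $B$, so $\psi(C)$ meets every connected component of $B$. Hence Lemma~\ref{lem:pushout} yields that $Z'$ is connected, and the surjection $Z'\to Z$ transfers connectedness to $Z$.

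There is no real obstacle in this argument; it is essentially a bookkeeping application of the pushout and equivariant folding lemmas already established. The only point worth making explicit is the verification that $\psi(\bigcup_H P)$ hits every copy of $D$ in $\bigcup_H D$, which is immediate from the $H$-indexing of the disjoint unions.
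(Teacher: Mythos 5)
Your argument is correct and is essentially the same as the paper's: apply Lemma~\ref{lem:pushout} to conclude that the pushout $Z'$ inherits cocompactness and connectedness from $Y$, then transfer both properties along the surjection $Z'\to Z$ supplied by Lemma~\ref{lem:folding}. Your explicit checks that $\bigcup_H D$ is $H$-cocompact and that $\psi(\bigcup_H P)$ meets every component of $\bigcup_H D$ are just the hypotheses of Lemma~\ref{lem:pushout} spelled out, which the paper leaves implicit.
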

\begin{proof}
Suppose $Y$ is $H$-cocompact. By Lemma~\ref{lem:pushout}, the pushout $Z'$ is $H$-cocompact. Lemma~\ref{lem:folding} implies that $Z'\to Z$ is surjective. It follows that $Z$ is $H$-cocompact. 
Suppose that $Y$ is connected. Then $Z'$ is connected by Lemma~\ref{lem:pushout}. Since $Z'\to Z$ is surjective, $Z$ is connected.
\end{proof}

For the lemmas below, consider the natural map $I^0(H,Y) \to I^0(H, Z)$ induced by $Y\to Z$. 
\begin{lem}
The image of $\isolated (H, Y)$ contains $\isolated (H, Z)$.
\end{lem}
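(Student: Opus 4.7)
The plan is to produce, given $v \in \isolated(H, Z)$ represented by $z$ and an edge $e$ isolated in $\link(z, Z)$, a $0$-cell $y\in Y$ whose orbit lies in $\isolated(H, Y)$ and maps to $v$. I achieve this by lifting the pair $(z, e)$ first through the folding $Z' \to Z$ and then through the pushout $Z' = Y \cup_{\bigsqcup_H P} \bigsqcup_H D$ back to $Y$. For the first lift, since the folding is surjective on cells, $e$ has a preimage edge $e'$ in $Z'$; I take $z'$ to be the endpoint of $e'$ mapping to $z$. A $2$-cell corner of $Z'$ at $z'$ incident to $e'$ would descend under the folding to a $2$-cell corner of $Z$ at $z$ incident to $e$, contradicting isolation, so $e'$ is isolated in $\link(z', Z')$.

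For the second lift, I use that because $P\to X$ is a simple closed path, each copy of $D$ is a topological disk, so the link of any interior $0$-cell of $D$ is a cycle and every $1$-cell of $D$ is incident to a $2$-cell corner of $D$ at each endpoint. The first of these properties rules out $z'$ being a $0$-cell purely interior to some copy of $D$, since then $\link(z', Z') = \link(z', D)$ would have no isolated vertex; hence $z'$ has a preimage $y\in Y$. The second property rules out $e'$ coming from any edge of a glued $D$ at $z'$, whether an interior edge or a boundary edge identified with an edge of $Y$; in either case $e'$ would be incident to a $2$-cell corner contributed by $D$. Therefore $e'$ arises from an edge $e_Y$ at $y$ in $Y$ that is not identified with any disk boundary edge, and the $2$-cell corners at $z'$ incident to $e'$ then coincide with the $2$-cell corners at $y$ in $Y$ incident to $e_Y$. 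Isolation of $e'$ at $z'$ therefore forces isolation of $e_Y$ at $y$, so $[y]\in \isolated(H, Y)$; and $y \mapsto z' \mapsto z$ shows $[y]$ maps to $v$.

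I expect the main obstacle to be the pushout bookkeeping in the second lift: one must enumerate the $1$-cells of $Z'$ at $z'$ according to whether they come from $Y$ only, from the interior of a copy of $D$, or from a $P$-identification along the glued boundary, and verify in each case that the disk side contributes a $2$-cell incidence unless the edge lies purely in $Y$. The first lift, by contrast, is immediate from the fact that the folding $Z'\to Z$ preserves incidences between edges and $2$-cell corners.
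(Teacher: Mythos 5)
Your proof is correct and follows essentially the same route as the paper: lift the isolated vertex of $\link(z,Z)$ through the fold $Z'\to Z$ by surjectivity, then use the fact that each copy of $D$ is a genuine disk (so every $1$-cell of $D$ carries a $2$-cell corner at each endpoint) to rule out the isolated edge coming from the disk side of the pushout, and finally pull the isolated vertex back along the embedding $Y\hookrightarrow Z'$. Your treatment of the pushout case analysis is a bit more explicit than the paper's, which only notes that a preimage of $e'$ in $\bigcup_H D$ would be forced onto $\partial D$ and hence supply a preimage in $\bigcup_H P$, but the underlying idea is identical.
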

\begin{proof}
Suppose that $v \in \isolated (H, Z)$ and let $z \in Z$ be a representative.  Let $e$ be the $1$-cell of $Z$ giving rise to the isolated vertex of $\link (z, Z)$.  Since $Z'\to Z$ is surjective, there is $1$-cell $e'$ of $Z'$ mapping to $e$, and a corresponding $0$-cell $z'$ mapping to $z$. Since $\link (z, Z)$ has an isolated vertex induced by $e$, $\link (z', Z')$ has an isolated vertex $s$ induced by $e'$. 
Suppose that $e'$ has a preimage $f$ in $\bigcup_HD$. Then $f$ is on the boundary path of a component of $\bigcup_HD$. It follows that $e'$ has a preimage in $\bigcup_HP$, and hence $e'$ has a preimage in $Y$. Therefore there is $y \in Y$ such that the image of $\link (y, Y)$ in $\link (z', Z')$ contains the isolated vertex $s$. It follows that $\link (y, Y)$ has an isolated vertex and therefore there is $u \in \isolated (H, Y)$ mapping to $v$. 
\end{proof}

\begin{lem}
The   image of $\bnd (H, Y) \cup \isolated (H, Y)$ contains $\bnd (H, Z)$. 
\end{lem}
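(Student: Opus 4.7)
The plan is to split $v \in \bnd(H,Z)$ according to the two conditions in the definition of $\bnd$. If $\link(z,Z)$ is a single vertex (where $z$ represents $v$), then $\link(z,Z)$ has an isolated vertex and so $v \in \isolated(H,Z)$; the previous lemma then places $v$ in the image of $\isolated(H,Y) \subseteq \bnd(H,Y)\cup\isolated(H,Y)$, settling this case.

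Otherwise $\link(z,Z)$ has a spur: a vertex $s$ of valence $1$ corresponding to some $1$-cell $e$ of $Z$ at $z$. Mimicking the previous lemma, I would use surjectivity of $Z'\to Z$ to lift $e$ to a $1$-cell $\tilde e$ of $Z'$ with an endpoint $\tilde z$ mapping to $z$, and let $\tilde s$ be the vertex of $\link(\tilde z, Z')$ corresponding to $\tilde e$. The central claim would then be that $\tilde s$ has valence at most $1$ in $\link(\tilde z, Z')$.

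I expect this valence bound to be the main obstacle. The natural map sending a corner at $\tilde z$ with side $\tilde e$ to the corresponding corner at $z$ with side $e$ need not be injective, because the equivariant folding $Z'\to Z$ can identify $2$-cells in order to realize the embedding-on-links characterization of an immersion. To control this I would argue inductively along the construction in Lemma~\ref{lem:folding}: at an elementary fold identifying two $1$-cells $\tilde e_a, \tilde e_b$ at a common $0$-cell, any induced identification of $2$-cells pairs at most one corner having side $\tilde e_a$ with one corner having side $\tilde e_b$, so the valence of the resulting image link vertex is bounded below by the maximum of the valences of its two preimage vertices $\tilde s_a, \tilde s_b$. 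Iterating across all folds then gives $\text{valence}(\tilde s) \leq \text{valence}(s) = 1$.

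With this claim in hand, the argument concludes using the explicit structure $Z' = Y \cup_P \bigsqcup_H D_h$. Each $D_h$ is homeomorphic to a Euclidean disk, so every interior $1$-cell of $D_h$ is bordered by exactly two $2$-cells of $D_h$, contributing a vertex of valence at least $2$ to $\link(\tilde z, Z')$. Since $\tilde s$ has valence at most $1$, the $1$-cell $\tilde e$ cannot be interior to any $D_h$ and therefore must lie in $Y$, forcing $\tilde z \in Y$. The subgraph inclusion $\link(\tilde z, Y) \hookrightarrow \link(\tilde z, Z')$ then shows that $\tilde s$ has valence at most $1$ in $\link(\tilde z, Y)$, so the orbit of $\tilde z$ in $I^0(H,Y)$ lies in $\bnd(H,Y) \cup \isolated(H,Y)$ and maps to $v$, as required.
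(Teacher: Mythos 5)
Your handling of the single-vertex case is fine and matches the paper. The gap is in the spur case, specifically the ``central claim'' that the lifted vertex $\tilde s$ has valence at most $1$ in $\link(\tilde z, Z')$. This claim is not established by the argument you sketch, and in fact it is false.

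The inductive step you describe only controls the valence of the \emph{merged} vertex arising from identifying $\tilde e_a$ with $\tilde e_b$. It says nothing about the valence of a \emph{different} vertex $\tilde s$ at that same $0$-cell, corresponding to some third $1$-cell $\tilde e$. A fold can decrease the valence of such a vertex: if two $2$-cells $R_1, R_2$ of $Z'$ have corners $(\tilde e, \tilde e_a)$ and $(\tilde e, \tilde e_b)$ at $\tilde z$, then once $\tilde e_a$ and $\tilde e_b$ are identified, $R_1$ and $R_2$ may map to the same corner in $X$ and be identified in a cascading fold, and the valence of $\tilde s$ drops by one. This is precisely what can happen along the gluing locus $\bigcup_H P$ of the pushout: a $2$-cell of $Y$ and a $2$-cell of some copy of $D$ can share a $1$-cell $\tilde e$ lying in $P$ and then be identified in $Z$. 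The pushout $Z' \to X$ is not a near-immersion --- that is exactly why the fold is required --- so a valence-$1$ spur vertex of $\link(z,Z)$ may have every preimage in $Z'$ of valence $\geq 2$, and the lift you take need not detect any spur or isolated vertex at all. Without the central claim, the rest of the argument (the $1$-cell $\tilde e$ is not interior to any $D_h$, hence $\tilde z \in Y$) has nothing to rest on.

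The paper's proof never inverts the fold. It works directly in $Z$: since $Y \to X$ is an immersion, $\bigcup_H D \to X$ is a near-immersion, and $Z \to X$ is an immersion, the induced maps $\link(y,Y) \to \link(z,Z)$ and $\link(w, \bigcup_H D) \to \link(z,Z)$ are embeddings, i.e., these links are subgraphs of $\link(z,Z)$. If the spur edge $s$ is a corner of a $2$-cell in the image of $Y$, then $\link(y,Y)$ contains $s$, inherits the spur, and $v$ lies in the image of $\bnd(H,Y)$. Otherwise $s$ is a corner of a $2$-cell from some $D_h$, so $w \in \partial D_h$, and the terminal (valence-$1$) vertex $t$ of the spur corresponds to a boundary $1$-cell of $D_h$, which is glued to $Y$; thus $\link(y,Y)$ contains $t$ but not $s$, making $t$ an isolated vertex of $\link(y,Y)$ and putting $v$ in the image of $\isolated(H,Y)$. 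That dichotomy, phrased entirely downstairs, is what you should aim for.
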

\begin{proof}
Suppose that $v \in \bnd (H, Z)$. Let $z \in Z$ be a representative of $v$.  If $\link (z, Z)$ is a single point, then $v \in \isolated (H,Y)$ and the previous lemma shows that  there is $u \in \isolated (H, Y)$ that maps to $v$.  Consider the case that $\link (z, Z)$ has a spur $s$ with terminal vertex $t$. 

Suppose that $s$ corresponds to a $2$-cell in the image of $Y\to Z$. Then there is $y\in Y$ mapping to $z$ such that the image of $\link (y, Y) \to \link (z, Z)$ contains $s$. Since $Y\to Z$ is an immersion, $\link (y, Y)$ is a subgraph of $\link (z, Z)$ and hence it has a spur.  In particular, there is $u\in \bnd (H, Y)$ that maps to $v$.

Otherwise, there is a $0$-cell $w$ of $\bigcup_H D$ that maps to $z$, and the image of  $\link (w, \bigcup_H D) \to \link (z, Z)$  contains $s$.  In particular, $\link (w, \bigcup_H D)$ has a spur and therefore $w$ is in the boundary of a connected component of $\bigcup_H D$. Therefore, there is also $y \in Y$ that maps to $z$, and the image of $\link(y, Y)\to \link(z, Z)$ contains $t$. As $\bigcup_HD \to Z$ and $Y\to Z$ are immersions, $\link (y, Y)$ and $\link (w, \bigcup_H D)$ are subgraphs of $\link (z, Z)$.  Both subgraphs contain the vertex $t$, but $\link (y, Y)$ does not contain $s$. It follows that $t$ is an isolated vertex of  $\link (y, Y)$. Therefore, there is $u \in \isolated (H,Y)$ that maps to $v$.
\end{proof}

The two lemmas above imply that  the image of $\bnd (H, Y) \cup \isolated (H, Y)$  contains $\bnd (H, Z) \cup \isolated (H, Z)$, and this concludes the proof of Theorem~\ref{thm:collapsing-essential-paths}. 

\subsection{No self-immersions}

\begin{lem}[No Self-immersions]\label{lem:no-self-immersion}
Let $X$ be a $G$-cocompact, proper and connected complex. Any $G$-equivariant immersion $\phi\colon X\to X$ is an isomorphism. 
\end{lem}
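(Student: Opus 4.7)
The plan is to first establish surjectivity $\phi(X)=X$ and then deduce bijectivity from a cell-counting argument with stabilizers.

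First I would iterate $\phi$ to stabilize the image. Since $X$ is $G$-cocompact, there are only finitely many $G$-invariant subcomplexes of $X$ (each determined by a face-closed subset of the finite cell-orbit set $\bigsqcup_p I^p(G,X)$), so the descending chain $X\supseteq \phi(X)\supseteq \phi^2(X)\supseteq\cdots$ of $G$-invariant subcomplexes stabilizes at some $Z=\phi^n(X)=\phi^{n+1}(X)$. By construction $\phi(Z)=Z$, and $\phi|_Z\colon Z\to Z$ is a surjective $G$-equivariant immersion of the connected (continuous image of connected $X$), $G$-cocompact, proper $G$-complex $Z$.

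Next I would verify that $\phi|_Z$ is an isomorphism of $Z$ via a counting argument. Since $\phi|_Z$ is surjective, the induced map $\bar\phi\colon I^p(G,Z)\to I^p(G,Z)$ is a surjective self-map of a finite set, hence bijective. From $G_\sigma\leq G_{\phi(\sigma)}$ one has $|G_\sigma|^{-1}\geq |G_{\phi(\sigma)}|^{-1}$; summing over $[\sigma]\in I^p(G,Z)$ and using the orbit bijection yields
\[\sum_{[\sigma]\in I^p(G,Z)}|G_\sigma|^{-1}\ \geq\ \sum_{[\sigma]\in I^p(G,Z)}|G_{\phi(\sigma)}|^{-1}\ =\ \sum_{[\tau]\in I^p(G,Z)}|G_\tau|^{-1},\]
so equality holds throughout, forcing $G_\sigma=G_{\phi(\sigma)}$ for every cell of $Z$. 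Combined with $G$-equivariance, this makes $\phi|_Z$ a bijection on each $G$-orbit of cells, hence a combinatorial bijection on cells, and together with local injectivity, an isomorphism of $Z$.

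The final step, which I expect to be the main obstacle, is showing $Z=X$; here the connectedness of $X$ is essential. Set $\psi=\phi^n$, so $\psi(X)=Z$ and $\psi|_Z=(\phi|_Z)^n$ is an isomorphism of $Z$. For each $0$-cell $x\in Z$ the immersion $\psi$ embeds $\link(x,X)$ into $\link(\psi(x),X)$, with image lying in $\link(\psi(x),Z)$ because $\psi(X)=Z$; comparing sizes and using that $\psi|_Z$ induces an isomorphism $\link(x,Z)\to\link(\psi(x),Z)$ gives $|\link(x,X)|\leq|\link(x,Z)|$. The reverse inequality from $Z\subseteq X$ then forces $\link(x,X)=\link(x,Z)$ for every $0$-cell $x\in Z$. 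Consequently no cell of $X\setminus Z$ is incident to any vertex of $Z$, so $X\setminus Z$ is a subcomplex, and $X=Z\sqcup(X\setminus Z)$ is a decomposition into two closed subcomplexes. Connectedness of $X$ now forces $X\setminus Z=\emptyset$, so $X=Z$. Applying the conclusion of the second paragraph to $\phi\colon X\to X$ directly then shows $\phi$ is an isomorphism.
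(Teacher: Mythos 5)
Your proof is correct, but it takes a genuinely different route from the paper's. The paper descends to the compact connected quotient $X/G$, invokes the fact that a self-immersion of a finite connected complex is an isomorphism (\cite[Lem.~6.3]{Wi04}), deduces surjectivity of $\phi$, and then gets injectivity from a stabilizer-counting argument: the fiber $\phi^{-1}(u)$ lies in a single $G$-orbit (by injectivity of the induced quotient map), hence its size is controlled by $|G_u|$, and since stabilizers grow weakly along the forward orbit $u,\phi(u),\phi^2(u),\dots$ but are uniformly bounded, the fibers eventually become singletons. You avoid the quotient entirely: you stabilize the descending chain $\phi^n(X)$ at a $G$-invariant subcomplex $Z$, reprove the self-immersion fact on $Z$ directly via the orbit-sum $\sum_{[\sigma]\in I^p(G,Z)}|G_\sigma|^{-1}$ (essentially the orbifold Euler-characteristic weights already central to the paper), and then recover $X=Z$ by a link-comparison plus connectedness. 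The two approaches buy different things: the paper's is shorter by outsourcing the finite-complex self-immersion fact, while yours is self-contained and stays in the equivariant framework, at the cost of the extra link argument to reconcile $Z$ with $X$. Both crucially use the properness hypothesis in the same way -- your equality $|G_\sigma|^{-1}=|G_{\phi(\sigma)}|^{-1}$ only forces $G_\sigma=G_{\phi(\sigma)}$ because all stabilizers are finite, and the paper's uniform bound on stabilizer orders requires the same. One small point worth making explicit in your write-up: once $\phi|_Z$ is a cell bijection, you should note that the inverse is automatically combinatorial because a bijective immersion is a local homeomorphism with a global inverse, so it is indeed an isomorphism of complexes and not merely a bijection on cells.
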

\begin{proof}
The quotient space $X/G$ is a combinatorial complex, and $\phi$ induces a self-immersion $\psi \colon X/G \to X/G$. Since $X/G$ is compact and connected,  $\psi$ is an isomorphism~\cite[Lem. 6.3]{Wi04} and hence $\phi$ is onto. 

Let $u$ be a $0$-cell. Since $\psi$ is an isomorphism, all elements of $\phi^{-1}(u)$ are $G$-equivalent. Therefore $|\phi^{-1}(u)|$  is a lower bound for the size of the $G$-stabilizer of $u$.  Since $X$ is $G$-cocompact and proper, there is an upper bound on the cardinality of cell stabilizers. 
Therefore $|\phi^{-1}(\phi^n(u))|=1$  for some $n>0$ depending on $u$.  By cocompactness, there is $m>0$ such that $\phi$ restricted to $\phi^{m}(X)=X$ is injective.  
\end{proof}

\section{Existence of cores}\label{sec:core}

This section contains the proof of the Theorem~\ref{thm:core}. The section is divided into five subsections. The first four subsections contain preliminary results, and the last subsection discusses the proof of the theorem. 

\subsection{Angled Graphs}

\begin{lem}[Curvature and Connected Components]\label{lem:components-weight} \label{cor:components-weight}
Let $\Delta$ be a cocompact angled $H$-graph.  Let $\Delta_1, \dots , \Delta_\ell$ be a collection of representatives of $H$-orbits of connected components of $\Delta$, and let $H_i$ be the stabilizer of $\Delta_i$. If  $\standardcurvature (H_i, \Delta_i) \leq \pi \cdot |H_i|^{-1}$ for $1\leq i\leq \ell$, then $\standardcurvature (H, \Delta) \leq \standardcurvature (H_i, \Delta_i)$ for $1\leq i\leq \ell$.
\end{lem}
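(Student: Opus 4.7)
The plan is to expand $\standardcurvature(H,\Delta)$ by partitioning its defining sums according to the $H$-orbits of connected components of $\Delta$, and then bound the resulting identity using the hypothesis together with the fact that $H_i\leq H$.

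First I would establish the orbit-stabilizer bookkeeping. Since each $\Delta_j$ is a connected component of $\Delta$ and $H_j$ is its setwise stabilizer, any $h\in H$ that fixes a cell of $\Delta_j$ must preserve that component and hence lies in $H_j$; consequently the $H$-orbits of cells lying in the orbit $H\cdot\Delta_j$ correspond bijectively to the $H_j$-orbits of cells of $\Delta_j$, and corresponding cells have equal stabilizers. Substituting this into the formula of Definition~\ref{defn:weight} yields the identity
\[
\standardcurvature(H,\Delta) \;=\; 2\pi|H|^{-1} \;-\; 2\pi\sum_{j=1}^{\ell} |H_j|^{-1} \;+\; \sum_{j=1}^{\ell}\standardcurvature(H_j,\Delta_j),
\]
since each summand $\standardcurvature(H_j,\Delta_j)$ already contributes its own $2\pi|H_j|^{-1}$ term that we must subtract off when re-assembling $\standardcurvature(H,\Delta)$.

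Subtracting $\standardcurvature(H_i,\Delta_i)$ from both sides then gives
\[
\standardcurvature(H,\Delta) - \standardcurvature(H_i,\Delta_i) \;=\; 2\pi\bigl(|H|^{-1} - |H_i|^{-1}\bigr) \;+\; \sum_{j\neq i}\Bigl(\standardcurvature(H_j,\Delta_j) - 2\pi|H_j|^{-1}\Bigr).
\]
To conclude I would observe two non-positivity facts. First, since $H_i\leq H$ we have $|H_i|^{-1}\geq |H|^{-1}$ (using the convention $|K|^{-1}=0$ when $K$ is infinite), so the first term on the right is $\leq 0$. Second, the hypothesis $\standardcurvature(H_j,\Delta_j)\leq \pi|H_j|^{-1}$ gives
\[
\standardcurvature(H_j,\Delta_j)-2\pi|H_j|^{-1} \;\leq\; -\pi|H_j|^{-1} \;\leq\; 0,
\]
so each term of the sum is $\leq 0$. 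Combining these yields the desired inequality.

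The step requiring the most care is the initial orbit decomposition: verifying that cells of $\Delta$ sort into $H$-orbits precisely as predicted by the $I^p(H_j,\Delta_j)$ and that stabilizer orders match. Once this bookkeeping is in place, the remainder is straightforward algebra; I do not anticipate a substantive obstacle.
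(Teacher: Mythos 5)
Your proof is correct and rests on the same key identity as the paper, namely
\[
\standardcurvature(H,\Delta) = \sum_{j=1}^{\ell}\standardcurvature(H_j,\Delta_j) + 2\pi\Bigl(|H|^{-1} - \sum_{j=1}^{\ell}|H_j|^{-1}\Bigr),
\]
and your orbit-stabilizer bookkeeping justifying it is accurate. The only difference is in how the estimate is finished: the paper first argues $2|H|^{-1}\leq\sum_j|H_j|^{-1}$ (which fails when $\ell=1$ and $H=H_1$ with $H$ finite, so the paper treats that case separately), then bounds $\standardcurvature(H,\Delta)\leq\sum_j(\standardcurvature(H_j,\Delta_j)-\pi|H_j|^{-1})$. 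You instead subtract $\standardcurvature(H_i,\Delta_i)$ from the identity directly, leaving the manifestly nonpositive expression $2\pi(|H|^{-1}-|H_i|^{-1}) + \sum_{j\neq i}(\standardcurvature(H_j,\Delta_j)-2\pi|H_j|^{-1})$. This rearrangement is a genuine, if small, simplification: it handles the $\ell=1$, $H=H_1$ degenerate case uniformly (the sum over $j\neq i$ is empty and the first term vanishes), so no case split is needed.
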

\begin{proof}
First notice that 
\begin{equation} \label{eq:weight-components}
\standardcurvature (H, \Delta) =   \sum_{i=1}^\ell \standardcurvature (H_i, \Delta_i) + 2\pi \cdot \left ( |H|^{-1} - \sum_{i=1}^\ell |H_i|^{-1}  \right ).
\end{equation}

If $\ell=1$ and $H=H_1$, then $\Delta=\Delta_1$ and obviously $\curvature (H, \Delta) = \curvature (H_1, \Delta_1)$. 
Otherwise $2 |H|^{-1} \leq \sum_{i=1}^\ell |H_i|^{-1}$  and hence equation~\eqref{eq:weight-components} implies that
\[ \standardcurvature (H, \Delta) \leq \sum_{i=1}^\ell \left ( \standardcurvature (H_i, \Delta_i) - \pi \cdot |H_i|^{-1}  \right ).\]
Since $\standardcurvature (H_i, \Delta_i) \leq \pi \cdot |H_i|^{-1}$, it follows that $\standardcurvature (H, \Delta) \leq \standardcurvature (H_i, \Delta_i)$.
\end{proof}

\begin{prop}\label{prop:weight-spurless-sections}
Let $\Gamma$ be an angled $G$-graph such that each for each subgroup $K\leq G$, every regular $K$-section has curvature $\leq \alpha \leq 0$.  Suppose $H$ is a subgroup of $G$ and $\Delta$ is a non-empty and spurless $H$-section of $\Gamma$.  The following statements hold.
\begin{enumerate}
\item \label{prop:stat-4} If $\Delta$ contains an edge, then $\standardcurvature (H, \Delta) \leq \alpha$.
\item \label{prop:stat-1} $\standardcurvature (H, \Delta)>0$ if and only if $\Delta$ is a single vertex and $H$ is a finite group.
\end{enumerate}
\end{prop}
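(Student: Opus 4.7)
The plan is to decompose $\Delta$ into the $H$-orbits of its connected components $\Delta_1,\dots,\Delta_\ell$ with stabilizers $H_1,\dots,H_\ell$, and reduce both statements to Lemma~\ref{lem:components-weight}. Each $\Delta_i$ inherits spurlessness from $\Delta$ and is connected, so it is either a single vertex or, in case it contains an edge, a regular $H_i$-section. A direct computation from Definition~\ref{defn:weight} gives $\standardcurvature(H_i,\Delta_i)=\pi\cdot|H_i|^{-1}$ in the single-vertex case, while the hypothesis on $\Gamma$ gives $\standardcurvature(H_i,\Delta_i)\leq\alpha\leq 0\leq\pi\cdot|H_i|^{-1}$ in the regular case. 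In either situation the bound required by Lemma~\ref{lem:components-weight} holds.

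For part (1), since $\Delta$ contains an edge, at least one component $\Delta_j$ contains an edge and is therefore a regular $H_j$-section with $\standardcurvature(H_j,\Delta_j)\leq\alpha$. Applying Lemma~\ref{lem:components-weight} at this index yields $\standardcurvature(H,\Delta)\leq\standardcurvature(H_j,\Delta_j)\leq\alpha$. The ``if'' direction of part (2) is an immediate computation: a single vertex with finite stabilizer $H$ has curvature $2\pi|H|^{-1}-\pi|H|^{-1}=\pi|H|^{-1}>0$.

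For the ``only if'' direction of part (2), part (1) forces $\Delta$ to be edgeless whenever $\standardcurvature(H,\Delta)>0$, since otherwise $\standardcurvature(H,\Delta)\leq\alpha\leq 0$. With $\Delta$ edgeless, each $\Delta_i$ is a single vertex $v_i$ and $I^1(H,\Delta)=\emptyset$, so the curvature formula specializes to
\[\standardcurvature(H,\Delta)=2\pi|H|^{-1}-\pi\sum_{i=1}^\ell |H_i|^{-1}.\]
I would then argue in three short steps from the inequality $2|H|^{-1}>\sum_{i=1}^\ell|H_i|^{-1}$: first, $H$ must be finite (otherwise the left side vanishes while the sum is nonnegative); second, the bound $|H_i|^{-1}\geq|H|^{-1}$ forces $\ell=1$; third, the resulting inequality $|H_1|>|H|/2$ combined with Lagrange's theorem forces $H_1=H$, so the orbit $H\cdot v_1$ is a single point and $\Delta=\{v_1\}$. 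The main content is this short case analysis together with the observation that single-vertex components saturate the hypothesis of Lemma~\ref{lem:components-weight}; there is no serious obstacle once that lemma is in hand.
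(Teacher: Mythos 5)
Your proposal is correct and follows essentially the same route as the paper: decompose into $H$-orbit representatives of connected components, verify the hypothesis of Lemma~\ref{lem:components-weight} component by component (distinguishing single-vertex components, where $\standardcurvature(H_i,\Delta_i)=\pi|H_i|^{-1}$, from regular sections, where the curvature is $\leq\alpha$), and then derive the two statements from that lemma plus the finite-index arithmetic. The only cosmetic variation is that in the ``only if'' direction of (2) you invoke part (1) to conclude $\Delta$ is edgeless, whereas the paper re-applies the component lemma to see each component is a single vertex; these are interchangeable, and your version of the concluding $\ell=1$, $H_1=H$ step spells out cleanly what the paper leaves as ``this implies.''
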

\begin{proof}
Let $\Delta_i$, $H_i$ and $\ell$ be as in the statement of Lemma~\ref{lem:components-weight}. Since $\Delta$ is spurless, for each $i$ either $\Delta_i$  is a regular $H_i$-section of $\Delta$ or else $\Delta_i$ is a single vertex. It follows that $\standardcurvature (H_i, \Delta_i) \leq 0 \leq \pi |H_i|^{-1}$ for each $i$.

Suppose $\Delta$ contains an edge. Without loss of generality assume that $\Delta_1$ is a connected component with at least one edge. Then $\Delta_1$ is a regular $H_1$-section, and by Lemma~\ref{cor:components-weight},  $\standardcurvature (H, \Delta) \leq \standardcurvature (H_1, \Delta_1)\leq \alpha$.

Suppose that $\standardcurvature (H, \Delta)>0$. Since $\alpha \leq 0$, Lemma~\ref{cor:components-weight} implies that each connected component of $\Delta$ is a single point.  Therefore $0<\standardcurvature (H, \Delta) = 2 \pi |H|^{-1} - \pi \sum_{i=1}^\ell |H_i|^{-1}$. This implies that $\ell=1$, $H$ is finite group and $H_1=H$. In particular, $\Delta$ is a single point and $H$ is a finite group.  The  ``if"  part of statement~\eqref{prop:stat-1} is immediate.
\end{proof}

\begin{cor}\label{cor:zero-weight}
Let $\Gamma$ be an angled $G$-graph such that each regular section has curvature $\leq \alpha< 0$. 
Suppose that $\Delta$ is an $H$-section of $\Gamma$ such that $\Delta$ is a non-empty and spurless.
Then $\standardcurvature (H, \Delta) = 0$ if and only if $\Delta$ is an edgeless graph and either:
\begin{enumerate}
\item  $\Delta$ consists of two vertices and $H$ acts trivially on $\Delta$, or
\item  the stabilizer of each vertex of $\Delta$ is infinite.
\end{enumerate}
\end{cor}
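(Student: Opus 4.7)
The plan is a direct computation from the curvature formula, reducing to the edgeless case. By Proposition~\ref{prop:weight-spurless-sections}(1), any edge in $\Delta$ would force $\standardcurvature(H,\Delta) \leq \alpha < 0$, which is incompatible with $\standardcurvature(H,\Delta) = 0$. Hence $\Delta$ must be edgeless in both directions of the corollary, and the entire statement reduces to a question about an edgeless spurless $H$-graph.

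Assume $\Delta$ is edgeless. Each connected component is then a single vertex, so I would pick representatives $v_1,\ldots,v_\ell$ of the $H$-orbits of vertices with stabilizers $H_1,\ldots,H_\ell$. From Definition~\ref{defn:weight}, each component's contribution is $\standardcurvature(H_i,\{v_i\}) = \pi|H_i|^{-1}$. Substituting these into the orbit-decomposition identity~\eqref{eq:weight-components} used in the proof of Lemma~\ref{lem:components-weight} collapses the curvature to
\[\standardcurvature(H,\Delta) = 2\pi|H|^{-1} - \pi\sum_{i=1}^\ell |H_i|^{-1},\]
so that $\standardcurvature(H,\Delta)=0$ is equivalent to the arithmetic equation $2|H|^{-1} = \sum_{i=1}^\ell |H_i|^{-1}$.

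The two listed alternatives then drop out of a short case analysis, recalling the convention $|K|^{-1}=0$ for infinite $K$. If $H$ is infinite then $|H|^{-1}=0$ forces every $|H_i|^{-1}=0$, so every vertex stabilizer is infinite, giving alternative~(2). If $H$ is finite then each $H_i\leq H$ satisfies $|H_i|^{-1}\geq |H|^{-1}$, so at most two summands appear, and checking $\ell\in\{1,2\}$ identifies the configurations realizing equality with alternative~(1). The converse direction is then an immediate verification by plugging the two listed configurations back into the displayed formula.

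I do not expect a serious obstacle here: once the edgeless reduction and the orbit-decomposition identity are in place, the argument is a finite arithmetic check. The only step requiring any care is the finite-$H$ analysis, where one must track orbit sizes $|H|/|H_i|$ and verify that the two-vertex alternative really exhausts the possibilities consistent with $\standardcurvature(H,\Delta)=0$.
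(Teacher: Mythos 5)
Your argument follows the paper's proof closely: reduce to the edgeless case via Proposition~\ref{prop:weight-spurless-sections}, use the orbit decomposition from Lemma~\ref{lem:components-weight} to obtain the identity $2|H|^{-1}=\sum_{i=1}^{\ell}|H_i|^{-1}$, and then split on whether $H$ is finite or infinite. The infinite case is correct as you state it.

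The finite case, however, is not settled by the arithmetic alone, and the phrase ``checking $\ell\in\{1,2\}$ identifies the configurations realizing equality with alternative~(1)'' glosses over a real issue. Rewriting the equation as $2=\sum_{i=1}^\ell[H:H_i]$, there are two solutions: $\ell=2$ with $H_1=H_2=H$ (which is alternative~(1)), but also $\ell=1$ with $[H:H_1]=2$. The latter is a genuine configuration: $\Delta$ is a single $H$-orbit of two vertices that some element of $H$ swaps, and a direct computation gives $\standardcurvature(H,\Delta)=2\pi|H|^{-1}-\pi\cdot 2|H|^{-1}=0$. That case falls under neither alternative~(1) (the action is not trivial) nor alternative~(2) (the stabilizers are finite), so it must either be ruled out by a further argument or absorbed into a weakened statement of alternative~(1). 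You should flag this explicitly rather than assert that the case analysis closes cleanly. It is worth noting that the paper's own proof has the same omission --- it passes from $2=\sum[H:H_i]$ directly to ``hence $\ell=2$'' --- and that the gap is harmless for the paper's downstream use of the corollary (in the proof that $\zero(H,Y_n)\setminus\bnd(H,Y_n)\subseteq\isolated(H,Y_n)$, only the edgeless conclusion is needed).
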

\begin{proof}
Let $\Delta_i$, $H_i$ and $\ell$ be as in the statement of Lemma~\ref{lem:components-weight}.
Suppose that $\standardcurvature (H, \Delta) = 0$. Since $\alpha<0$,  Proposition~\ref{prop:weight-spurless-sections} implies that each $\Delta_i$ is a single vertex.
Then 
\begin{equation*}0 = \standardcurvature (H, \Delta) = 2\pi\cdot  |H|^{-1} - \pi \cdot \sum_{i=1}^\ell |H_i|^{-1}.
 \end{equation*}
If $H$ is an infinite group then each $H_i$ is infinite, and therefore the stabilizer of each point of $\Delta$ is an infinite subgroup of $H$.
If $H$ is a finite group, then $2  = \sum_{i=1}^\ell [H: H_i]$, where $[H: H_i]$ is the index of $H_i$ in $H$.  Hence $\ell=2$ and $H_i=H$ for $i=1,2$,
and, in particular, the action of $H$ on $\Delta$ is trivial.

The ``if" part of the statement follows by a direct computation of $\standardcurvature (H, \Delta)$ and  is left to the reader.
\end{proof}

\subsection{Counting Immersions in Nonpositively curved complexes}

\begin{defn}[$G$-equivalent maps]
Let $X$ be a $G$-complex, let $H$ be a subgroup of $G$, and, for $i=1,2$, let $Y_i$ be an $H$-complex.  A pair of $H$-equivariant immersions $\phi_1 \colon Y_1 \to X$ and $\phi_2\colon Y_2\to X$ are {\em $G$-equivalent} if there is an $H$-isomorphism of complexes $\psi \colon Y_1 \to Y_2$ and $g \in G$ such that $g \circ \phi_1  = \phi_2 \circ \psi$.
\end{defn}

\begin{defn}
Let $X$ be a nontrivial cocompact and proper angled $G$-complex. Define \begin{itemize}
\item $\zero (G, X)$ as the set of $v\in I^0(G, X)$ with $\curvature (v)=0$, 
\item $\nega (G, X)$ as the set of $v\in I^0(G, X)$ with $\curvature (v)<0$, and
\item $\positivecells (G, X)$ as the set of $v\in I^0(G, X)$ with $\curvature (v)>0$.
\end{itemize}
\end{defn}

\begin{thm}[Counting Immersions in Nonpositively Curved $G$-Complexes] \label{thm:main-counting}
Let $X$ be a nontrivial cocompact and proper angled $G$-complex with sectional curvature $\leq 0$. 
Let $H$ be a subgroup of $G$, let $r$, $s$, and $t$ be fixed numbers.
Up to $G$-equivalence,  there are finitely many  $H$-equivariant immersions $Y\to X$ with the following properties:
\begin{enumerate}
\item $Y$ is $H$-cocompact and connected.
\item $\chi (H, Y) \geq r$.
\item $|\zero(H,Y) |\leq s$.
\item $|\bnd(H,Y) |\leq t$.
\end{enumerate}
\end{thm}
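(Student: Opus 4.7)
The plan is to bound the total number of $H$-orbits of cells of $Y$, and then observe that only finitely many $H$-equivariant immersions $Y\to X$ of bounded combinatorial complexity exist up to $G$-equivalence. The degenerate case where $Y$ is a single $0$-cell contributes one $G$-equivalence class per $G$-orbit of $0$-cells of $X$, which is finite by cocompactness, and may be set aside. In the remaining case every link $\link(y,Y)$ is nonempty. The first key observation is that $\positivecells(H,Y)\subseteq\bnd(H,Y)$: if $v\in I^0(H,Y)$ has representative $y$ mapping to $x\in X$ and $v\notin\bnd(H,Y)$, then $\Delta=\link(y,Y)$ is a spurless $H_y$-invariant subgraph of $\link(x,X)$ consisting of more than a single vertex. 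When $\Delta$ contains an edge, Proposition~\ref{prop:weight-spurless-sections} applied to $\link(x,X)$ as an angled $G_x$-graph, combined with Proposition~\ref{prop:standard-generalized} which promotes sectional curvature $\leq 0$ to generalized sectional curvature $\leq 0$, gives $\kappa(v)=\standardcurvature(H_y,\Delta)\leq 0$. When $\Delta$ is edgeless with at least two vertices, the formula
\begin{equation*}
\standardcurvature(H_y,\Delta)=2\pi|H_y|^{-1}-\pi\sum_{w\in I^0(H_y,\Delta)}|H_w|^{-1}
\end{equation*}
is nonpositive, since positivity would force $\Delta$ to consist of a single vertex. Hence $|\positivecells(H,Y)|\leq|\bnd(H,Y)|\leq t$.

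Next I apply the Combinatorial Gauss--Bonnet Theorem~\ref{thm:Gauss-Bonnet}. Because $X$ is proper and cocompact, the $G$-equivalence classes of triples $(x,K,\Lambda)$ with $K\leq G_x$ and $\Lambda$ a $K$-invariant subgraph of $\link(x,X)$ form a finite set; the curvatures $\kappa(v)$ realized in any immersion $Y\to X$ therefore lie in a finite set, providing a uniform upper bound $\kappa_+$ on positive curvatures and a uniform constant $\kappa_-<0$ with $\kappa(v)\leq\kappa_-$ for every $v\in\nega(H,Y)$. Since $\kappa(f)\leq 0$ for every $f\in I^2(H,Y)$ by the $2$-cell sectional curvature condition, Gauss--Bonnet yields
\begin{equation*}
2\pi r\leq 2\pi\chi(H,Y)\leq t\kappa_+ + |\nega(H,Y)|\kappa_-,
\end{equation*}
which uniformly bounds $|\nega(H,Y)|$ (or else shows no such $Y$ exists because the inequality is inconsistent). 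Combined with the hypotheses this bounds $|I^0(H,Y)|$. Since each $\link(y,Y)$ embeds into the uniformly finite graph $\link(x,X)$, the numbers $|\aristae(v)|$ and $|\corners(v)|$ are uniformly bounded, so Remark~\ref{rem:2to1} and the natural partition of corners by $2$-cells yield uniform bounds on $|I^1(H,Y)|$ and $|I^2(H,Y)|$.

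With all orbit counts uniformly bounded, the quotient orbihedron $Y/H$ has bounded combinatorial complexity, and the immersion $Y\to X$ descends to a combinatorial map $Y/H\to G\backslash X$ into the fixed finite orbihedron $G\backslash X$. The $G$-equivalence class of $Y\to X$ is in turn recovered from this orbihedron map together with the stabilizer and attaching data, which involve only finitely many choices under the bounded orbit counts. I expect the main obstacle to be this final combinatorial counting step: rigorously identifying $G$-equivalence classes of $H$-equivariant immersions $Y\to X$ with finitely many pieces of orbihedron data, and verifying that attaching maps and stabilizer assignments take only finitely many values once the complexity bounds from the previous steps are imposed.
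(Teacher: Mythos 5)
Your argument follows essentially the same route as the paper: you use Gauss--Bonnet for orbihedra, the uniform curvature bounds coming from properness and cocompactness (your $\kappa_\pm$ are precisely the paper's $\positivebound(G,X)$ and $\negativebound(G,X)$ from Definition~\ref{def:Bounds}), and the inclusion $\positivecells(H,Y)\subseteq\bnd(H,Y)$ (which is the content of Lemma~\ref{lem:zero-cells}, via Propositions~\ref{prop:standard-generalized} and~\ref{prop:weight-spurless-sections}) to obtain a uniform bound on $|I^0(H,Y)|$, and then try to finish by a counting argument. The computation through $|\nega(H,Y)|$ is correct and matches Lemma~\ref{lem:Bounds} and the paper's assembly in the proof of Theorem~\ref{thm:main-counting}.

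The one place where you explicitly leave a gap --- rigorously deriving finiteness of $G$-equivalence classes from the bound on orbit counts --- is exactly where the paper supplies Lemma~\ref{lem:fundamental-domain} (Immersions Determined by a Compact Complex) and Lemma~\ref{lem:counting-immersions} (Counting Immersions). The mechanism is worth internalizing because it bypasses the orbihedron-quotient bookkeeping you were anticipating and, in particular, makes your bounds on $|I^1(H,Y)|$ and $|I^2(H,Y)|$ unnecessary. Since $|I^0(H,Y)|<M$, one chooses a finite \emph{connected} subcomplex $K\subseteq Y$ with $|K^0|<M$ whose $H$-translates cover $Y$. By properness and cocompactness of $X$ there are only finitely many $G$-equivalence classes of immersions $\psi\colon K\to X$ of connected complexes with $|K^0|<M$. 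Fixing such a $\psi$, the whole $H$-equivariant immersion $Y\to X$ is recovered (Lemma~\ref{lem:fundamental-domain}) from the finite set $\{g\in H: K\cap gK\neq\emptyset\}$, which is finite because $G$ acts properly on $X$ and $\psi$ is an immersion, together with the isomorphisms identifying $g^{-1}K\cap K$ with $K\cap gK$; each of these pieces of data takes only finitely many values. This is cleaner than passing to $Y/H\to G\backslash X$ and then re-lifting, because the fundamental-domain argument never has to reconstitute the stabilizer assignment from the orbihedron map --- the immersion $K\to X$ already carries that information. So: same approach, and the gap you flagged is genuine but is precisely what Lemmas~\ref{lem:fundamental-domain} and~\ref{lem:counting-immersions} are for.
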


%

\begin{lem}[Immersions Determined by a Compact Complex]\label{lem:fundamental-domain}
Let $X$ be a proper $G$-complex, let $H$ be a subgroup of $G$, let $K$ be a finite connected complex, and let $\psi \colon K\to X$ be an immersion. There are finitely many $G$-equivalence classes of $H$-equivariant immersions $\phi\colon Y\to X$  such that there is an embedding $\imath\colon K\hookrightarrow Y$ satisfying $\phi \circ \imath = \psi$ and the $H$-translates of $K$ cover $Y$.
\end{lem}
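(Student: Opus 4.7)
The plan is to show that any such immersion $\phi\colon Y\to X$ is determined, up to $H$-equivariant isomorphism compatible with $\phi$, by a finite combinatorial datum which itself takes only finitely many values. The hypothesis that the $H$-translates of $\imath(K)$ cover $Y$ means that the $H$-equivariant map $H\times K\to Y$, $(h,x)\mapsto h\cdot\imath(x)$, is surjective on cells of each dimension. Thus $Y$ as an $H$-complex is the quotient of $H\times K$ by an $H$-equivariant equivalence relation $\sim$, and by $H$-equivariance this relation is specified by the collection
\[ S_{\sigma,\tau} \;=\; \{h\in H : (1,\sigma)\sim(h,\tau)\} \]
indexed by pairs of cells $\sigma,\tau$ of $K$.

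The next step is to bound each $S_{\sigma,\tau}$ using properness of $X$. If $(1,\sigma)\sim(h,\tau)$, then $\imath(\sigma)=h\cdot\imath(\tau)$ in $Y$, and applying $\phi$ gives $\psi(\sigma)=h\cdot\psi(\tau)$. Hence
\[ S_{\sigma,\tau} \;\subseteq\; \{g\in G : g\cdot\psi(\tau)=\psi(\sigma)\}. \]
Since $X$ is a proper $G$-complex, the stabilizer $G_{\psi(\tau)}$ is finite, so the set on the right is either empty or a coset of $G_{\psi(\tau)}$, hence finite. Because $K$ is finite, there are only finitely many pairs $(\sigma,\tau)$ and finitely many possible values of each $S_{\sigma,\tau}$; therefore the complete gluing datum $\{S_{\sigma,\tau}\}$ takes only finitely many values.

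Finally, I would argue that the datum $\{S_{\sigma,\tau}\}$ together with $\psi$ recovers the pair $(Y,\phi)$ up to $H$-equivariant isomorphism: two pairs sharing the same datum are both quotients of $H\times K$ by the same $H$-equivariant equivalence relation, and both immersions factor uniquely through the quotient via $(h,\sigma)\mapsto h\cdot\psi(\sigma)$. Since an $H$-equivariant isomorphism commuting with $\phi$ is a $G$-equivalence (take $g=1$), finitely many gluing data yield finitely many $G$-equivalence classes. The only subtle point, and the main thing to verify carefully, is cross-dimensional consistency: identifications of higher-dimensional cells must be compatible with those of the cells in their boundaries. This is automatic because the attaching maps in $Y$ are obtained from those in $K$ by applying the $H$-action, so forcing $(1,\sigma)\sim(h,\tau)$ on $d$-cells propagates to the required identifications of their boundary cells. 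The core obstruction throughout is finiteness of pointwise preimages under the orbit map, which is exactly what properness of the $G$-action on $X$ supplies.
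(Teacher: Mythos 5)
Your proof is correct and follows essentially the same route as the paper's: both realize $Y$ as a quotient of $H\times K$, reduce the gluing data to a finite combinatorial datum bounded via properness of the $G$-action on $X$ (your $S_{\sigma,\tau}\subseteq\{g\in G:g\psi(\tau)=\psi(\sigma)\}$ is the cell-by-cell analogue of the paper's finite set $\{g:\psi K\cap g\psi K\neq\emptyset\}$ together with the overlap isomorphisms $J_g\to J_g'$), and then invoke finiteness of $K$ to conclude. The one point worth noting is that your $\{S_{\sigma,\tau}\}$ records only \emph{which} cells of $H\times K$ are identified, not \emph{how}; this is still enough because the specific gluing isomorphism of a cell $(1,\sigma)$ with $(h,\tau)$ factors through the common image in $X$ via the characteristic maps and is therefore forced by $\psi$ and the $G$-action on $X$, which are fixed — so the coarser datum does pin down $(Y,\phi)$, as the paper's record of the overlap isomorphisms makes explicit.
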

\begin{proof}
Observe that if $\phi\colon Y\to X$ is an $H$-equivariant immersion and $K$ is a subcomplex with $\bigcup_{h\in H} K=Y$ then $\phi$ is completely determined by its restriction to $K$. 

The $H$-proper complex $Y$ is completely determined  by the finite set of elements  $\{g\in H : K\cap gK \neq \emptyset \}$ of $H$ and the isomorphisms between the complexes $J_g=g^{-1}K \cap K$    and $J_g'=K\cap g K$. Indeed, one can recover $Y$ by taking $H\times K$ and identifying the various
$h\times J_g$ with $hg \times J_g'$ using the isomorphism.

To show that there are finitely many  possibilities for the above data we argue as follows: let $g_1, \ldots, g_n$ be the set of elements of $H$ such that $\psi K \cap g \psi K \not =\emptyset$, and note that this set is finite since $G$ acts properly on $X$. For each $i$, there are finitely many choices of isomorphisms between subcomplexes $J_i \subset K$ and $J_i'\subset K$.
\end{proof}

%

\begin{lem}[Counting Immersions] \label{lem:counting-immersions}
Let $X$ be a proper, cocompact and connected $G$-complex, let $H$ be a subgroup of $G$, and let $M$ be a positive integer. Up to $G$-equivalence, there are finitely many $H$-equivariant immersions $Y \to X$ such that $Y$ is connected and $|I^0(H,Y)|<M$. 
\end{lem}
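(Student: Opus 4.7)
The plan is to reduce the problem to Lemma~\ref{lem:fundamental-domain} by producing, for each $H$-equivariant immersion $\phi\colon Y\to X$ in our family, a finite connected subcomplex $K \subseteq Y$ whose $H$-translates cover $Y$ and whose number of cells is uniformly bounded in terms of $M$ and $X$. Once this is in place, the finiteness of isomorphism types of such $K$, together with the finiteness of immersions $\psi\colon K\to X$ up to the $G$-action, will yield only finitely many pairs $(K, \psi)$; Lemma~\ref{lem:fundamental-domain} applied to each pair then produces the desired conclusion.

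First I would verify that $X$ is locally finite, and deduce that $Y$ is locally finite and $H$-cocompact. Since $X$ is proper and cocompact, there are finitely many $G$-orbits of cells in $X$ and every cell-stabilizer is finite; in particular the sizes of $\link(x, X)$ for $x \in X^{(0)}$ and the lengths of boundary cycles of $2$-cells of $X$ are uniformly bounded. Because $Y\to X$ is an immersion, the same bounds apply to $Y$. Combined with the hypothesis $|I^0(H,Y)| < M$, this forces $|I^1(H,Y)|$ and $|I^2(H,Y)|$ to be uniformly bounded in terms of $M$ and $X$: each higher-dimensional cell of $Y$ is, after $H$-translation, incident to one of fewer than $M$ orbit representatives of $0$-cells, and only uniformly many cells are incident at each such representative. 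Hence $\bar Y := H\backslash Y$ is a finite connected complex with a uniformly bounded number of cells.

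Next I would construct $K$. Since $\bar Y^{(1)}$ is a connected graph with fewer than $M$ vertices, it admits a spanning tree that lifts to a finite subtree $T \subseteq Y^{(1)}$ starting at some base vertex $v_0$; by construction $T$ contains a representative of every $H$-orbit of $0$-cells. For each $H$-orbit of $1$-cells of $Y$ not already present in $T$, use the $H$-action to pick a representative $e$ with one endpoint in $T$, and adjoin $e$ together with its other endpoint. For each $H$-orbit of $2$-cells of $Y$, pick a representative $f$ with a boundary vertex in $T$, and adjoin $f$ together with the remaining vertices and edges of $\partial f$. The resulting finite subcomplex $K \subseteq Y$ is connected (each attachment meets the previous stage), contains a representative of every $H$-orbit of cells (so $HK = Y$), and has a number of cells bounded by a constant multiple of $|I^0(H,Y)| + |I^1(H,Y)| + L \cdot |I^2(H,Y)|$, where $L$ uniformly bounds the length of a $2$-cell boundary in $X$. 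Thus $|K|$ is uniformly bounded in terms of $M$ and $X$.

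Finally, I would count the pairs $(K, \psi)$. There are finitely many isomorphism types of finite connected $2$-complexes with a bounded number of cells, and for each such $K$ there are only finitely many immersions $\psi\colon K \to X$ up to post-composition by elements of $G$: after using the $G$-action to translate the image of a chosen $0$-cell of $K$ into one of finitely many $G$-orbit representatives in $X^{(0)}$, the immersion extends cell-by-cell with only finitely many choices at each step, since $X$ is locally finite. Applying Lemma~\ref{lem:fundamental-domain} to each of these finitely many pairs $(K, \psi)$ produces finitely many $G$-equivalence classes of $H$-equivariant immersions $Y\to X$, exhausting the family under consideration. The main obstacle I expect lies in the third paragraph --- keeping $K$ connected as one adjoins $1$- and $2$-cell orbit representatives, and ensuring that the size of $K$ is controlled uniformly in $M$ and the combinatorial data of $X$, requires some care, particularly because adjoining a $2$-cell may force adjoining additional $1$-cells and $0$-cells in order for $K$ to remain a subcomplex.
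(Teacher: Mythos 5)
Your proposal is correct and follows essentially the same route as the paper: reduce to Lemma~\ref{lem:fundamental-domain} by producing, for each admissible $\phi\colon Y\to X$, a finite connected subcomplex $K\subseteq Y$ of uniformly bounded size whose $H$-translates cover $Y$, and then count the resulting pairs $(K,\psi)$. The paper's own proof is terser and slightly imprecise on exactly the point you flagged: it asserts one can take $K$ connected with $|K^0|<M$, which is immediate for a connected subcomplex hitting every $0$-cell orbit (lift a spanning tree of $H\backslash Y^{(1)}$) but not obviously compatible with the requirement $HK=Y$ needed by Lemma~\ref{lem:fundamental-domain}, since adjoining representatives of the $1$- and $2$-cell orbits can force extra $0$-cells into $K$. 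Your construction handles this correctly by bounding $|K|$ in terms of $M$ together with the uniform bounds on link sizes and boundary lengths of $2$-cells coming from the properness and cocompactness of $X$ and the fact that $\phi$ is an immersion; since the argument only needs some uniform bound on $|K|$, this is exactly what is required, and your worry in the last paragraph is precisely the small gap in the paper's wording.
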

\begin{proof}
Observe that every $H$-cocompact complex $Y$ with $|I^0(H,Y)|<M$ contains a connected subcomplex $K$ with $|K^0|<M$. Since $X$ is proper and $G$-cocompact, there are finitely many $G$-equivalent classes of immersions $K\to X$ where $K$ is connected and $|K^0|<M$. The result then follows from Lemma~\ref{lem:fundamental-domain}.
\end{proof}

\begin{lem} \label{lem:zero-cells}
Let $X$ be a nontrivial cocompact and proper angled $G$-complex with generalized sectional curvature $\leq 0$. Then \[I^0(G, X) = \zero (G, X) \cup \nega (G, X) \cup \bnd (G, X).\] 
\end{lem}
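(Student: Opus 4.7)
The plan is to show the contrapositive: if $v \in I^0(G, X)$ and $v \notin \bnd(G, X)$, then $\curvature(v) \leq 0$. Since $\{\zero, \nega, \positivecells\}$ is a partition of $I^0(G, X)$ by definition, this is equivalent to the inclusion $\positivecells(G, X) \subseteq \bnd(G, X)$, which gives the claimed decomposition.

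First, I would pick a representative $x$ of $v$, set $H = G_x$ and $\Delta = \link(x, X)$, and recall from the remark after Definition~\ref{defn:weight} that $\curvature(v) = \standardcurvature(H, \Delta)$. Properness and cocompactness of the $G$-action imply that $H$ is finite and $\Delta$ is a finite graph; in particular $\Delta$ is an $H$-cocompact angled $H$-graph. The hypothesis $v \notin \bnd(G, X)$ translates directly into two properties of $\Delta$: it has no spurs, and it is not a single vertex. Hence $\Delta$ is a non-empty spurless $H$-section of itself.

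Second, I would appeal to the generalized sectional curvature hypothesis, which asserts that every regular $K$-section of $\link(x)$, for each $K \leq H$, has curvature $\leq 0$. This is exactly the hypothesis needed in Proposition~\ref{prop:weight-spurless-sections} applied to $\Gamma = \Delta$ with $\alpha = 0$. Statement~(2) of that proposition then gives that $\standardcurvature(H, \Delta) > 0$ forces $\Delta$ to be a single vertex, which contradicts the reduction above. Therefore $\curvature(v) = \standardcurvature(H, \Delta) \leq 0$, placing $v$ in $\zero(G, X) \cup \nega(G, X)$.

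No substantive obstacle is expected; the lemma is essentially an immediate corollary of Proposition~\ref{prop:weight-spurless-sections}(2). The only degenerate case requiring a brief remark is that of an isolated $0$-cell (empty link), which is either excluded by the standing nontrivial/cocompact setup on $X$ or is absorbed into $\bnd(G, X)$ by the natural extension of the definition (since an empty link trivially has no cycles and no edges forcing it away from the boundary type).
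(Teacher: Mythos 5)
Your proof is correct and takes essentially the same route as the paper: both reduce to observing that $v \notin \bnd(G,X)$ makes $\link(x,X)$ a spurless, non-singleton (and, modulo a remark about the empty link, non-empty) $G_x$-section of itself, and then invoke Proposition~\ref{prop:weight-spurless-sections}(2) with $\alpha = 0$ to conclude $\standardcurvature(G_x, \link(x,X)) \leq 0$. The paper dismisses the empty-link case by asserting $X$ is nontrivial and connected, which matches the first of your two suggested resolutions.
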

\begin{proof}
Let $v \in I^0(G, X)$, suppose that $v \not \in \bnd (G, X)$.
Since $X$ is nontrivial and connected, $\link (x, X)$ is non-empty.  
Since $v \not \in \bnd (G, X)$, $\link (x)$ is spurless and not a single point.
Since $X$ has generalized sectional curvature $\leq 0$, Proposition~\ref{prop:weight-spurless-sections} implies that  $\standardcurvature (G_x, \link (x, X))\leq 0$. Hence $v \in \zero (G, X) \cup \nega (G, X)$.
\end{proof}

\begin{defn}[$\negativebound (G, X)$, $\positivebound (G, X)$]\label{def:Bounds}
Let $X$ be a cocompact and proper angled $G$-complex. There are finitely many 0-cells of $X$ up to the action of $G$. Since $X$ is proper and cocompact, for each $0$-cell $x$ of $X$, the angled $G_x$-graph $\link (x)$ has finitely many sections.  Let $\negativebound (G, X)$ be the maximum curvature among all possible such sections with negative curvature; if there are no sections with negative curvature let $\negativebound (G, X)=-1$. Analogously, let $\positivebound (G, X)$ be the maximum curvature among all sections with non-negative curvature; if there are no sections with non-negative curvature let $\positivebound (G, X)=0$.
\end{defn}

\begin{rem}
For a cocompact and proper angled $G$-complex $X$, $\negativebound (G, X)<0 \leq \positivebound (G, X)$.
\end{rem}

\begin{lem}\label{lem:Bounds} 
Let $X$ be a cocompact and proper angled $G$-complex. Suppose that $\curvature (f)\leq 0$ for each $2$-cell of $X$. 
If $H$ is a subgroup of $G$, $Y$ is a cocompact $H$-complex, and $Y\to X$ is an $H$-equivariant immersion, then  
\begin{equation*} |\nega (H, Y)| \leq   \frac{2\pi \cdot \chi (H, Y) - \positivebound (G, X) \cdot |\positivecells (H, Y)| }{ \negativebound (G, X)}, \end{equation*} 
\end{lem}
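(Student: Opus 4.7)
The plan is to apply the Combinatorial Gauss--Bonnet formula (Theorem~\ref{thm:Gauss-Bonnet}) to the angled $H$-complex $Y$, and then bound the sum of vertex curvatures by invoking the uniform bounds $\positivebound(G, X)$ and $\negativebound(G, X)$ from Definition~\ref{def:Bounds}.

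First I would verify that the 2-cell curvatures in $Y$ are nonpositive. Since $\phi\colon Y\to X$ is an $H$-equivariant combinatorial immersion and the angles on $Y$ are pulled back from $X$, each 2-cell $f$ of $Y$ maps injectively onto a 2-cell $f'$ of $X$, preserving $\size{\partial f}$ and the angle at every corner. The bracketed quantity
\[ \Biggl(\sum_{c\in\corners(f)}\mangle(c)\Biggr) - \pi\bigl(\size{\partial f}-2\bigr) \]
in the definition of $\curvature(f)$ therefore agrees with the corresponding quantity for $f'$, which is $\leq 0$ by hypothesis. Hence $\curvature(f) \leq 0$ for every $f\in I^2(H, Y)$, and Theorem~\ref{thm:Gauss-Bonnet} applied to $Y$ gives
\[ 2\pi \cdot \chi(H, Y) \leq \sum_{v \in I^0(H, Y)} \curvature(v). \]

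Next I would bound each vertex curvature using $\positivebound$ and $\negativebound$. For each $v\in I^0(H,Y)$ with representative $y\in Y$, local injectivity of $\phi$ identifies $\link(y, Y)$ with an $H_y$-invariant subgraph of $\link(\phi(y), X)$. Since $Y$ is $H$-cocompact and locally finite, $\link(y, Y)$ is $H_y$-cocompact, so it is an $H_y$-section of the angled $G_{\phi(y)}$-graph $\link(\phi(y), X)$, and the identity $\curvature(v) = \standardcurvature(H_y, \link(y, Y))$ holds. By Definition~\ref{def:Bounds}, this curvature is at most $\positivebound(G, X)$ when it is nonnegative, and at most $\negativebound(G, X)$ when it is negative.

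Finally I would partition $I^0(H, Y) = \nega(H,Y) \cup \zero(H,Y) \cup \positivecells(H,Y)$ by the sign of $\curvature(v)$. The contribution from $\zero(H,Y)$ is zero, so the previous step yields
\[ \sum_{v \in I^0(H, Y)} \curvature(v) \leq \negativebound(G, X) \cdot |\nega(H, Y)| + \positivebound(G, X) \cdot |\positivecells(H, Y)|. \]
Combining this with the Gauss--Bonnet inequality and dividing through by $\negativebound(G, X) < 0$ (which reverses the inequality) gives the stated bound on $|\nega(H, Y)|$. The only subtle point is the transfer of the 2-cell curvature bound from $X$ to $Y$ through the immersion; this is immediate because immersions preserve the combinatorial type of 2-cells together with their corner angles, so no serious obstacle arises.
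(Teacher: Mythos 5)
Your proof is correct and takes essentially the same route as the paper: apply the combinatorial Gauss--Bonnet formula to $Y$, use that the 2-cell contributions are nonpositive, and bound each vertex curvature by $\negativebound(G,X)$ or $\positivebound(G,X)$ via the observation that an immersion identifies $\link(y,Y)$ with an $H_y$-section of $\link(\phi(y),X)$. You are slightly more explicit than the paper about the step of transferring the 2-cell curvature hypothesis from $X$ to $Y$ (which, as you note, uses that the immersion preserves $|\partial f|$ and corner angles, plus properness so that $|G_f|^{-1}>0$), but the argument is the same in substance.
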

\begin{proof}
For $v\in I^0(H, Y)$, $\curvature (v) = \standardcurvature (H_y, \link (y, Y))$ where $y$ is a representative of $v$. Let $x \in X$ be the image of $y$. Since $Y\to X$ is an immersion, $\link (y, Y)$ is an $H_y$-section of  the $G_x$-graph $\link (y, X)$. In particular, $\curvature (v) \leq \negativebound(G, X)<0$ or $0 \leq \curvature (v)\leq \positivebound(G, X)$.

By the Gauss-Bonnet Theorem~\ref{thm:Gauss-Bonnet} and the assumption that $2$-cells have non-positive curvature, 
\[ 2\pi \cdot \chi (H, Y) \leq \negativebound(G, X) \cdot | \nega (H, Y)| +  \positivebound (G, X) \cdot |\positivecells (H, Y)|.\]
Since $\negativebound (G, X)<0$, the conclusion of the lemma is immediate. 
\end{proof}

\begin{proof}[Proof of Theorem~\ref{thm:main-counting}]
Let $Y \to X$ be an $H$-equivariant immersion satisfying the listed properties. 
By Proposition~\ref{prop:standard-generalized}, $X$ has generalized sectional curvature $\leq 0$. Therefore $Y$  has generalized sectional curvature $\leq 0$ as well.  By Lemma~\ref{lem:zero-cells},  $ I^0 (H, Y) = \zero (H, Y) \cup \nega (H, Y)  \cup \bnd (H, Y)$.
By Lemmas~\ref{lem:zero-cells} and~\ref{lem:Bounds}, 
\begin{equation*} 
\begin{split}
|I^0 (H, Y)| & \leq  |\zero (H, Y)|  +  |\bnd (H, Y)|  + |\nega (H, Y)|  \\
& \leq  s + t + \frac{ 2\pi \cdot r- \positivebound(G, X) \cdot t } { \negativebound (G, X)}.
\end{split} \end{equation*}
Since there is an upper bound for the size of $I^0(H, Y)$ independent of $H$ and $Y$,  Lemma~\ref{lem:counting-immersions} implies that there are finitely many posibilities for $Y\to X$.
\end{proof}

\subsection{Proof of the Simply-connected Core Theorem~\ref{thm:core}}

Construct a sequence of $H$-equivariant immersions $\phi_n\colon Y_{n-1} \to Y_{n}$ for $n\geq 1$ and $\psi_n\colon Y_n \to X$ for $n\geq 0$ in the following way. 

\begin{equation*}
\xymatrix{
Y_0 \ar[r]^{\phi_1} \ar[dd]_{\psi_0}& Y_1\ar[r]^{\phi_2} \ar[ldd]_{\psi_1} & \cdots \ar[r]^{\phi_{n}} & Y_n \ar[llldd]_{\psi_n} \ar[r]^{\phi_{n+1}} & \cdots &  \\
\\
X & &  & & & & 
}
\end{equation*}

Let $Y_0=Y$ and let $\psi_0\colon Y_0\to X$ be the inclusion map.  Assume that  $\psi_n\colon Y_n \to X$ has been defined.  Suppose there is a nontrivial path $P\to Y_n$ such that either
\begin{itemize}
\item $P\to Y_n$ is a closed path that is not null-homotopic, or 
\item $P \to Y_n$ is not a closed path but $P \to Y_n\to X$ is a closed path. 
\end{itemize}
Such path is called {\em essential}. Choose a  path $P\to Y_n$ of minimal length with the above property. Observe that $P\to Y_n \to X$ is a simple closed path by minimality.  
Let $\phi_{n+1} \colon Y_n \to Y_{n+1}$ be the $\pi_1$-surjective map and  $\psi_{n+1} \colon Y_{n+1}\to X$ be the $H$-equivariant immersion provided by Theorem~\ref{thm:collapsing-essential-paths} applied to the immersion $\psi_n\colon Y_n\to X$ and the path $P\to Y_n$.  
If there is no path $P\to Y_n$ as above, then the sequence stabilizes in the sense that  $Y_{m}=Y_n$,  $\psi_{m}=\psi_n$ and $\phi_{m}$ be the identity map for all $m\geq n+1$. (We show below that the sequence always stabilizes.) 

\begin{lem}
$Y_n$ is connected and $H$-cocompact.
\end{lem}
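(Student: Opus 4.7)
The plan is a straightforward induction on $n$. For the base case, $Y_0 = Y$ is connected and $H$-cocompact directly by the hypothesis of Theorem~\ref{thm:core}, which states that $Y$ is a connected $H$-cocompact subcomplex of $X$, and $\psi_0$ is just the inclusion map.

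For the inductive step, assume $Y_n$ is connected and $H$-cocompact. The complex $Y_{n+1}$ is obtained from $Y_n$ by applying Theorem~\ref{thm:collapsing-essential-paths} to the $H$-equivariant immersion $\psi_n \colon Y_n \to X$ and the chosen minimal essential path $P \to Y_n$. Note that $P \to Y_n \to X$ is a simple closed path by the minimality of $|P|$, so the hypotheses of Theorem~\ref{thm:collapsing-essential-paths} are fulfilled. Conclusions \eqref{thm:collapsing-1} and \eqref{thm:collapsing-2} of that theorem give that $Y_{n+1}$ is connected and $H$-cocompact respectively, closing the induction.

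In the degenerate case where the sequence stabilizes at step $n$ (meaning $Y_m = Y_n$ for all $m \geq n$), the conclusion is inherited directly from $Y_n$ without invoking the theorem again. Thus no step presents a real obstacle: the lemma is essentially a bookkeeping consequence of the inheritance properties already isolated in parts \eqref{thm:collapsing-1} and \eqref{thm:collapsing-2} of Theorem~\ref{thm:collapsing-essential-paths}.
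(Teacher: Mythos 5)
Your proof is correct and matches the paper's argument exactly: an induction on $n$ with base case $Y_0 = Y$, using conclusions \eqref{thm:collapsing-1} and \eqref{thm:collapsing-2} of Theorem~\ref{thm:collapsing-essential-paths} for the inductive step. The paper states this more tersely but the content is identical.
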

\begin{proof}
This follows by induction using Theorem~\ref{thm:collapsing-essential-paths} with base case that $Y_0$ is connected and $H$-cocompact.
\end{proof}

\begin{lem}\label{lem:pi_1-surjective}
 $\phi_n$ is an immersion and is $\pi_1$-surjective.
\end{lem}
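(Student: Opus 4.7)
The plan is to handle the two assertions separately and in order: $\pi_1$-surjectivity comes for free from the construction, while the immersion property will be deduced from a ``cancellation" argument using that $\psi_n$ is already known to be an immersion.

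For $\pi_1$-surjectivity, I would simply invoke Theorem~\ref{thm:collapsing-essential-paths}: by construction, $\phi_{n+1}\colon Y_n\to Y_{n+1}$ is precisely the map $Y\to Z$ produced by that theorem when applied to $\psi_n\colon Y_n\to X$ and the essential path $P\to Y_n$, and the theorem states that this map is $\pi_1$-surjective.

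For the immersion claim, the key observation is that we have a factorization $\psi_n=\psi_{n+1}\circ\phi_{n+1}$ in which $\psi_n$ is known (inductively, with base case the inclusion $Y_0=Y\hookrightarrow X$) to be an immersion, and $\psi_{n+1}$ is an immersion by Theorem~\ref{thm:collapsing-essential-paths}. For any $0$-cell $y\in Y_n$ the map on links factors as
\[
\link(y,Y_n)\;\longrightarrow\;\link(\phi_{n+1}(y),Y_{n+1})\;\longrightarrow\;\link(\psi_n(y),X),
\]
and the composition is injective because $\psi_n$ is locally injective at $y$. It follows that the first arrow is injective, so $\phi_{n+1}$ is locally injective at every $0$-cell, i.e., an immersion. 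I do not anticipate any real obstacle here; the only subtle point is to make sure the factorization $\psi_n=\psi_{n+1}\circ\phi_{n+1}$ is recorded explicitly from the construction and from the commutative diagram in Theorem~\ref{thm:collapsing-essential-paths}. In the degenerate case where the sequence has stabilized at stage $n$ (so $\phi_{n+1}$ is the identity), both assertions are trivial.
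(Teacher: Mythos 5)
Your proposal is correct and takes essentially the same route as the paper: $\pi_1$-surjectivity is read off directly from Theorem~\ref{thm:collapsing-essential-paths}, and the immersion claim is deduced from the factorization $\psi_n=\psi_{n+1}\circ\phi_{n+1}$ together with the fact that $\psi_n$ is an immersion. The only cosmetic difference is that you frame the immersion property of $\psi_n$ as an inductive claim, whereas in the construction each $\psi_n$ (for $n\geq 1$) is already supplied as an immersion by Theorem~\ref{thm:collapsing-essential-paths}, with $\psi_0$ the inclusion; either way the argument goes through.
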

\begin{proof}
The property of $\pi_1$-surjectivity follows from the definition of $\phi_n$ in terms of Theorem~\ref{thm:collapsing-essential-paths}. Since $\psi_{n+1} \circ \phi_{n+1} = \psi_n$ and $\psi_n$ is an immersion, $\phi_{n+1}$ is an immersion.  
\end{proof}

\begin{lem}
$|\zero (H, Y_n) \cup \bnd (H, Y_n)| \leq |\isolated (H, Y_0) \cup \bnd (H, Y_0)|$.
\end{lem}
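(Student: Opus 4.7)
The plan is to combine two ingredients: the inheritance statement \eqref{thm:collapsing-3} of Theorem~\ref{thm:collapsing-essential-paths}, applied inductively along the sequence $Y_0 \to Y_1 \to \cdots \to Y_n$, together with a pointwise inclusion $\zero(H, Y_n) \setminus \bnd(H, Y_n) \subseteq \isolated(H, Y_n)$ coming from the negative sectional curvature hypothesis via Corollary~\ref{cor:zero-weight}.

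First I would observe the easy half: each immersion $\phi_{k+1}\colon Y_k\to Y_{k+1}$ was produced by Theorem~\ref{thm:collapsing-essential-paths}, so \eqref{thm:collapsing-3} gives
\[ |\bnd(H, Y_{k+1}) \cup \isolated(H, Y_{k+1})| \leq |\bnd(H, Y_k) \cup \isolated(H, Y_k)|, \]
and iterating yields $|\bnd(H, Y_n) \cup \isolated(H, Y_n)| \leq |\bnd(H, Y_0) \cup \isolated(H, Y_0)|$.

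Next I would establish the key set-theoretic inclusion $\zero(H, Y_n) \cup \bnd(H, Y_n) \subseteq \isolated(H, Y_n) \cup \bnd(H, Y_n)$. Fix $v \in \zero(H, Y_n) \setminus \bnd(H, Y_n)$ with representative $y\in Y_n$; by definition of $\bnd$, the angled $H_y$-graph $\link(y, Y_n)$ is spurless and is not a single vertex. Since $\psi_n\colon Y_n \to X$ is an immersion (Lemma~\ref{lem:pi_1-surjective} shows the sequence consists of immersions), $\link(y, Y_n)$ embeds as an $H_y$-section of $\link(\psi_n(y), X)$; because $X$ has negative sectional curvature, Proposition~\ref{prop:standard-generalized} gives a uniform $\alpha < 0$ such that every regular section of $\link(\psi_n(y), X)$ — and hence every regular section of $\link(y, Y_n)$ — has curvature $\leq \alpha$. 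Since $\curvature(v) = \standardcurvature(H_y, \link(y, Y_n)) = 0$, Corollary~\ref{cor:zero-weight} forces $\link(y, Y_n)$ to be edgeless; being non-empty (the spurless, non-single-vertex hypothesis rules out the empty graph) and edgeless, it has an isolated vertex, so $v \in \isolated(H, Y_n)$.

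Combining the inclusion with the inductive inequality yields the desired bound. The only real subtlety is the step invoking Corollary~\ref{cor:zero-weight}: one must verify that the hypothesis ``every regular section has curvature $\leq \alpha < 0$'' is transferred from $X$ to the links in $Y_n$ under the $H$-equivariant immersion $\psi_n$, and that the passage from ``$v\notin\bnd$'' to ``$\link(y,Y_n)$ is a valid spurless non-empty section'' is legitimate; these are the technical points where I would be most careful, but both follow from the defining properties of immersions and of $\bnd$.
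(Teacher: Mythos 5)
Your argument follows the paper's proof essentially step for step: iterate Theorem~\ref{thm:collapsing-essential-paths}\eqref{thm:collapsing-3} to control $|\isolated\cup\bnd|$, then use Corollary~\ref{cor:zero-weight} to establish the set inclusion $\zero(H,Y_n)\setminus\bnd(H,Y_n)\subseteq\isolated(H,Y_n)$. Your version is actually a touch more careful than the paper's at one point: the paper's ``consists of two vertices'' implicitly discards case (2) of Corollary~\ref{cor:zero-weight} via properness, whereas you correctly observe that edgelessness alone suffices.

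One small slip: the parenthetical ``(the spurless, non-single-vertex hypothesis rules out the empty graph)'' is false. The empty graph is vacuously spurless and is certainly not a single vertex, so those conditions do not exclude it; moreover Corollary~\ref{cor:zero-weight} explicitly requires $\Delta$ non-empty as a hypothesis, so you cannot derive non-emptiness from the corollary's conclusion either. You need a separate reason: for instance, $Y_n$ is connected and (assuming $Y_0$ nontrivial) not a single point, so each $0$-cell has at least one incident $1$-cell and its link is non-empty; alternatively, properness of $Y_n$ (inherited from $X$ via the immersion) makes $H_y$ finite, so an empty link would give $\curvature(v)=2\pi|H_y|^{-1}>0$, contradicting $v\in\zero(H,Y_n)$. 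Either fix is immediate, and the paper suppresses the same point, so this does not affect the overall correctness of your approach.
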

\begin{proof}
By Theorem~\ref{thm:collapsing-essential-paths}~\eqref{thm:collapsing-3}, an induction argument shows that $|\isolated (H, Y_n) \cup \bnd (H, Y_n)| \leq |\isolated (H, Y_0) \cup \bnd (H, Y_0)|$. Therefore it is enough to prove that $\zero (H, Y_n) \cup \bnd (H, Y_n)$ is a subset of $\isolated (H, Y_n) \cup \bnd (H, Y_n)$. Let $v \in \zero (H, Y_n) - \bnd (H, Y_n) $ and let $y$ be a representative of $v$.
Since $Y_n\to X$ is an immersion, $Y_n$ has negative sectional curvature. By Corollary~\ref{cor:zero-weight}, $\link (y, Y_n)$ consists of two vertices and no edge. Thus $v \in \isolated (H, Y_n)$.
\end{proof}

\begin{lem}
$\chi (H, Y_n) \geq  -b_1^{(2)}(H, Y_0)$
\end{lem}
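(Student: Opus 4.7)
The plan is to combine Atiyah's $\ell^2$-formula with the monotonicity of the first $\ell^2$-Betti number under $\pi_1$-surjections. First, I would invoke Atiyah's formula (Theorem~\ref{thm:lu-2}) for the cocompact $H$-complex $Y_n$. Since $Y_n$ is $2$-dimensional, this gives
\[
\chi(H,Y_n) \;=\; b_0^{(2)}(H,Y_n) - b_1^{(2)}(H,Y_n) + b_2^{(2)}(H,Y_n),
\]
and because $b_0^{(2)}$ and $b_2^{(2)}$ are nonnegative, we immediately obtain the lower bound $\chi(H,Y_n) \geq -b_1^{(2)}(H,Y_n)$.

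The heart of the argument is then to show $b_1^{(2)}(H,Y_n) \leq b_1^{(2)}(H,Y_0)$. For this I would consider the composition $f\colon Y_0 \to Y_n$ given by $\phi_n \circ \cdots \circ \phi_1$. By Lemma~\ref{lem:pi_1-surjective}, each $\phi_i$ is $\pi_1$-surjective, so $f$ is $\pi_1$-surjective; abelianizing, the induced map $f_*\colon H_1(Y_0;\C)\to H_1(Y_n;\C)$ is surjective. This is the hypothesis needed for Corollary~\ref{cor:betti-goal}.

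To apply that corollary I still need to check the stabilizer hypothesis: every cell stabilizer of $Y_0$ and of $Y_n$ is finite (or has vanishing $\ell^2$-Betti numbers). This follows from the fact that $\psi_0\colon Y_0\to X$ and $\psi_n\colon Y_n\to X$ are $H$-equivariant immersions into the proper $G$-complex $X$: an immersion restricts injectively on stabilizers of cells (stabilizers of a cell in $Y_i$ embed into the $G$-stabilizer of its image in $X$), and the latter are finite by properness of the $G$-action on $X$. Hence Corollary~\ref{cor:betti-goal} applies and yields $b_1^{(2)}(H,Y_0) \geq b_1^{(2)}(H,Y_n)$, which combined with the earlier inequality gives $\chi(H,Y_n) \geq -b_1^{(2)}(H,Y_n) \geq -b_1^{(2)}(H,Y_0)$.

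Essentially no step is a real obstacle here; the only point requiring a moment's care is the verification of the stabilizer hypothesis, and that is handled by the elementary observation that equivariant immersions induce injections on stabilizers. The real conceptual content is external: it lies in the $\ell^2$-machinery already packaged as Theorems~\ref{thm:lu-2},~\ref{thm:lu-1},~\ref{thm:lu-3} and Corollary~\ref{cor:betti-goal}, and in the earlier construction guaranteeing the $\pi_1$-surjectivity of each $\phi_i$.
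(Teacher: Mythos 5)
Your proof is correct and follows essentially the same route as the paper: Atiyah's formula gives $\chi(H,Y_n)\geq -b_1^{(2)}(H,Y_n)$, and $\pi_1$-surjectivity of $Y_0\to Y_n$ plus Corollary~\ref{cor:betti-goal} gives $b_1^{(2)}(H,Y_0)\geq b_1^{(2)}(H,Y_n)$. You spell out the stabilizer-finiteness verification (via immersions into the proper $G$-complex $X$) that the paper leaves implicit, which is a welcome bit of extra care.
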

\begin{proof}
By Lemma~\ref{lem:pi_1-surjective}
 and an induction argument, $Y_0 \to Y_{n+1}$ is $\pi_1$-surjective. Then Corollary~\ref{cor:betti-goal} implies that $b_1^{(2)}(H, Y_0) \geq b_1^{(2)}(H, Y_n)$. Then the conclusion follows from $\chi (H, Y_n) \geq - b_1^{(2)}(H, Y_n)$.
\end{proof} 

For positive integers $m<n$, let $\phi_{m,n}$ denote the immersion $\phi_{n-1}\circ \cdots \circ \phi_{m+1}$ from $Y_m$ to $Y_n$.

\begin{lem}\label{lem:conclusion}
There is $m_0>0$ such that $\phi_{m_0, n}$ is an $H$-equivariant isomorphism for every $n> m_0$. 
\end{lem}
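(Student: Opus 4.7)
The plan is to combine the uniform bounds from the preceding three lemmas with Theorem~\ref{thm:main-counting} and Lemma~\ref{lem:no-self-immersion}. Since each $Y_n$ is connected and $H$-cocompact, and the quantities $|\zero(H, Y_n)|$, $|\bnd(H, Y_n)|$, and $-\chi(H, Y_n)$ are uniformly bounded (independently of $n$), Theorem~\ref{thm:main-counting} implies that the $H$-equivariant immersions $\psi_n\colon Y_n \to X$ fall into finitely many $G$-equivalence classes. A pigeonhole argument then supplies indices $m_0 < n_0$ together with an $H$-isomorphism $\alpha\colon Y_{m_0} \to Y_{n_0}$ and an element $g \in G$ realizing the $G$-equivalence $g \circ \psi_{m_0} = \psi_{n_0} \circ \alpha$.

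From this data I form the $H$-equivariant map $\alpha^{-1} \circ \phi_{m_0, n_0}\colon Y_{m_0} \to Y_{m_0}$. This is an immersion because $\phi_{m_0, n_0}$ is a composition of immersions (Lemma~\ref{lem:pi_1-surjective}) and $\alpha^{-1}$ is an isomorphism. Each $Y_n$ is connected, $H$-cocompact, and inherits $H$-properness from the $G$-proper complex $X$ via the immersion $\psi_n$; so Lemma~\ref{lem:no-self-immersion} forces $\alpha^{-1} \circ \phi_{m_0, n_0}$ to be an isomorphism, whence $\phi_{m_0, n_0}$ is an isomorphism.

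To propagate this to the individual links of the chain, I will use a Schroeder--Bernstein style observation: if $g \circ f$ is an isomorphism of composable $H$-equivariant immersions between $H$-cocompact, proper, connected complexes, then $f \circ (g \circ f)^{-1} \circ g$ is an $H$-equivariant self-immersion of the middle complex; by Lemma~\ref{lem:no-self-immersion} it is an isomorphism, and being idempotent it must equal the identity, so both $f$ and $g$ are isomorphisms. Applying this to $\phi_{m_0, n_0} = \phi_{m_0+1, n_0} \circ \phi_{m_0+1}$ yields that $\phi_{m_0+1}$ is an isomorphism.

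Finally, I will observe that $\phi_{m_0+1}$ being an isomorphism forces the construction to have stabilized at step $m_0$. Indeed, if an essential path $P \to Y_{m_0}$ had existed, Theorem~\ref{thm:collapsing-essential-paths} would produce $\phi_{m_0+1}$ either identifying the two distinct endpoints of a non-closed $P$ (violating injectivity) or killing a nontrivial class $[P] \in \pi_1(Y_{m_0})$ (violating $\pi_1$-injectivity), contradicting the isomorphism property in both cases. By the definition of the sequence this means $\phi_m = \identity$ for all $m > m_0$, and hence $\phi_{m_0, n}$ is an $H$-equivariant isomorphism for every $n > m_0$. The delicate step is the Schroeder--Bernstein argument: it is exactly the idempotence of the constructed self-immersion that lets Lemma~\ref{lem:no-self-immersion} identify it with the identity and thereby pin down each intermediate $\phi_k$ as an isomorphism.
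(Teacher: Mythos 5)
Your argument is correct, and it shares the paper's two pivotal ingredients---Theorem~\ref{thm:main-counting} to force a repeated $G$-equivalence class among the $\psi_n$, and Lemma~\ref{lem:no-self-immersion} to convert the resulting self-immersion into an isomorphism---but the conclusion is reached by a genuinely different route. The paper argues by contradiction: it asserts that if the lemma failed there would be $m<n$ with $\psi_m$ and $\psi_n$ $G$-equivalent yet $\phi_{m,n}$ not an isomorphism, and this rests on the unstated observation that a nontrivial collapsing step $\phi_{k+1}$ fails either cellular injectivity (when the essential path $P$ is not closed) or $\pi_1$-injectivity (when $P$ is closed and non-nullhomotopic), a defect that persists when one post-composes with the remaining immersions. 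You instead take the positive conclusion that $\phi_{m_0,n_0}$ \emph{is} an isomorphism at face value and pass to a Schroeder--Bernstein-type step: the idempotent $f\circ(g\circ f)^{-1}\circ g$ is an $H$-equivariant self-immersion of $Y_{m_0+1}$, hence by Lemma~\ref{lem:no-self-immersion} an isomorphism, hence the identity, which forces both $f=\phi_{m_0+1}$ and $g=\phi_{m_0+1,n_0}$ to be isomorphisms. The absence of an essential path in $Y_{m_0}$ and the stabilization of the sequence then follow exactly as you say. Your route is arguably tighter because it bypasses the implicit non-isomorphism claim in the paper, and the idempotence is precisely the extra leverage Lemma~\ref{lem:no-self-immersion} needs. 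As you noted, $Y_{m_0+1}$ is $H$-proper because it immerses in the $G$-proper complex $X$, which is what licenses the application of Lemma~\ref{lem:no-self-immersion} to the middle term.
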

\begin{proof}
By Theorem~\ref{thm:main-counting} and the previous lemmas, the sequence $\{ \psi_n \}_{n\geq 1}$ contains only finitely many non $G$-equivalent immersions. If the statement is false, then there are positive integers $m<n$ such that $Y_m\to X$ and $Y_{n}\to X$ are $G$-equivalent but the $H$-equivariant immersion $\phi_{m, n}\colon Y_m \to Y_n$ is not an isomorphism. Since $Y_m$ and $Y_n$ are isomorphic as $H$-complexes, $\phi_{m, n}$ would be a self-immersion that is not an isomorphism contradicting  Lemma~\ref{lem:no-self-immersion}. 
\end{proof}

{\bf Conclusion of the proof of Theorem~\ref{thm:core}.} By Lemma~\ref{lem:conclusion}, it follows that there exists $m_0>0$ such that $Y_{m_0}$ has no essential paths as defined in the construction of the $Y_i$'s .  In particular, $\psi_{m_0}$ is an embedding and $Y_{m_0}$ is simply-connected. Therefore $\psi_{m_0}(Y_{m_0})$ is a simply-connected $H$-cocompact subcomplex of $X$ containing $Y_0$.

\section{Quasiconvex Cores}\label{sec:qccore}

\begin{figure}\centering
\includegraphics[width=.6\textwidth]{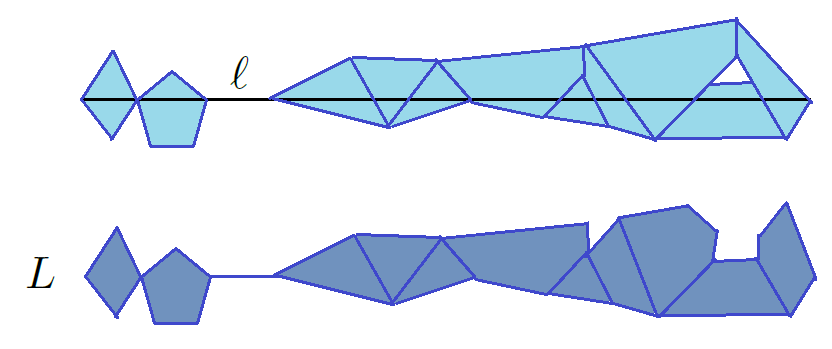}
\caption{At the top is the sequence of open cells of $X$ intersecting the geodesic $\ell$. At the bottom is the resulting complex $L$. } \label{fig:carrier}
\end{figure}

This section contains the proof of Theorem~\ref{thm:qccore}. Let $X$ be a proper cocompact  $CAT(0)$ $G$-complex whose cells are convex. This is a complete geodesic metric space~\cite{Br92}; for background on $CAT(0)$ cell-complexes we refer the reader to~\cite{BrHa99}. 

The proof is split in several subsections.  Assign angles as they arise from the $CAT(0)$-metric and suppose $X$ has negative sectional curvature.  Let $H$ be a subgroup of $G$ and let $Y$ be a simply-connected cocompact $H$-subcomplex. Let $\ell$ be a geodesic segment in $X$ such that its endpoints are $0$-cells of $Y$.  

\subsection{The carrier $L$ of $\ell$, the paths $P_L$, $P_Y$ and the disk diagram $D$.}

Let $R_1, \ldots, R_n$ be the sequence of open cells of $X$ that intersect $\ell$ in the order in which they are traversed by $\ell$. Since cells of $X$ are convex  there are no repetitions in the sequence and $\partial R_i \cap \partial R_j$ is connected. Let $L$ be the complex constructed by taking the disjoint union of closures $\bar R_1 \sqcup \cdots \sqcup \bar R_n$ and identifying the two copies of $\partial  R_i \cap \partial R_{i+1}$ in $\bar R_i$ and $\bar R_{i+1}$ for $1\leq i<n$. Observe that $L \to X$ is a near-immersion, $L$ is simply connected, and  $\ell \hookrightarrow X$ factors as $\ell \to L \to X$. See Figure~\ref{fig:carrier}.

Since $Y$ and $L$ are connected, there are edge paths $P_Y\to Y$ and $P_L \to L$ connecting the endpoints $s$ and $t$ of $\ell$. Since $X$ is simply-connected there is a disk diagram $D \to X$ between $P_Y\to X$ and $P_L \to X$.  Choose $P_Y\to Y$ and $P_L \to L$ and $D \to X$ so that  $\big ( \area(D), |\partial D| \big)$  is minimal in the lexicographical order. 

\begin{lem}[$\ell$ is uniformly close to $P_L$]\label{lem:PLell}
If $R\to X$ is an open cell intersecting $\ell$, then its closure  $\bar R\to X$ intersects the image of $P_L\to X$. 
\end{lem}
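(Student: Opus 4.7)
The plan is to exploit the chain-like structure of $L$. By construction, $L$ is obtained from the disjoint union $\bar R_1\sqcup\cdots\sqcup\bar R_n$ by identifying \emph{only} consecutive pairs along $\partial R_i\cap\partial R_{i+1}$. So, writing $\bar R_i^L$ for the image of $\bar R_i$ in $L$, one has $\bar R_i^L\cap\bar R_j^L=\emptyset$ in $L$ whenever $|i-j|\geq 2$, and $\bar R_i^L\cap\bar R_{i+1}^L$ is precisely the glued copy of $\partial R_i\cap\partial R_{i+1}$.

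First I would reduce to a statement about $L$: every open cell $R$ of $X$ that intersects $\ell$ is, by definition of the list, one of the $R_i$, so it suffices to show that for each $i\in\{1,\dots,n\}$ the edge path $P_L\to L$ meets the subspace $\bar R_i^L$. (Then its image in $X$ meets $\bar R=\bar R_i$.) The boundary cases $i=1$ and $i=n$ are immediate since the endpoints $s,t$ of $\ell$, and hence of $P_L$, satisfy $s\in\bar R_1$ and $t\in\bar R_n$.

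For the main case $1<i<n$, I would argue that $L\setminus\bar R_i^L$ is disconnected into the two pieces $\bar R_1^L\cup\cdots\cup\bar R_{i-1}^L$ (with the glued copy of $\partial R_{i-1}\cap\partial R_i$ removed) and $\bar R_{i+1}^L\cup\cdots\cup\bar R_n^L$ (with the glued copy of $\partial R_i\cap\partial R_{i+1}$ removed). The key point is that the only possible way for a point of $\bar R_j$ with $j<i$ and a point of $\bar R_k$ with $k>i$ to become identified in $L$ is through a chain of identifications passing through $\bar R_i$; hence $\bar R_j^L\cap\bar R_k^L\subseteq\bar R_i^L$ for every such $j,k$, and deleting $\bar R_i^L$ really does disconnect the two halves. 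Since $s$ lies in the first half and $t$ in the second, the path $P_L\to L$ cannot be confined to the complement of $\bar R_i^L$, so it must meet $\bar R_i^L$.

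There is no significant obstacle here: the argument is purely structural and does not invoke the minimality of $(\area(D),|\partial D|)$ — that hypothesis will enter elsewhere in the section. The only point requiring care is verifying the transitivity observation that any coincidence between $\bar R_j^L$ and $\bar R_k^L$ for $|j-k|\geq 2$ factors through $\bar R_i^L$ for some $j<i<k$; this follows directly from the fact that in the construction of $L$ only consecutive pairs of closures are glued.
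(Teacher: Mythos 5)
Your argument is correct and is essentially the paper's own proof, merely spelled out: the paper observes in one line that the closure in $L$ of any cell meeting $\ell$ disconnects $L$, so $P_L$ (running from $s$ to $t$) must meet it, and your chain-of-identifications analysis is exactly the justification of that disconnection. One small overstatement: your opening claim that $\bar R_i^L\cap\bar R_j^L=\emptyset$ whenever $|i-j|\geq 2$ can fail, since transitivity of the gluing relation identifies points of $\bar R_{i-1}$ with points of $\bar R_{i+1}$ whenever the two consecutive gluing loci share a point (e.g.\ when $\ell$ passes through a $0$-cell); fortunately you only ever use the correct weaker statement $\bar R_j^L\cap\bar R_k^L\subseteq\bar R_i^L$ for $j<i<k$, which is what the separation argument needs.
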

\begin{proof}
Since $P_L\to L$ connects the endpoints of $\ell \to L$, if a closed cell $S$ disconnects $L$ then $P_L\to L$ intersects $S$. By definition of $L$, if $R$ is an open cell of $X$ intersecting $\ell$ then the closure of $R$ in $L$ disconnects $L$. 
\end{proof}

\begin{defn}[Cut $0$-Cells and Cut-Components] 
\label{def:cut-tree}
A $0$-cell $v$ is called a
cut $0$-cell of $D$ provided that $D -\{ v\}$ is not connected.
 Let $V$ be the set of all cut $0$-cells of $D$. Closures of connected components of $D -V$ are \emph{cut-components}. A cut-component is \emph{non-singular} if it contains a 2-cell.     
\end{defn}

\begin{lem}\label{lem:PYembedded}
The path $P_Y\to D$ is embedded.
\end{lem}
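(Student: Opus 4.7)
The plan is to argue by contradiction, exploiting the lexicographic minimality of the pair $(\area(D), |\partial D|)$ built into the choice of $D$. Assume $P_Y\to D$ is not embedded. Then two distinct $0$-cells $u_1<u_2$ in the domain of $P_Y$ map to a common $0$-cell $v$ of $D$, so one may decompose $P_Y$ as a concatenation $A\,B\,C$, where $A$ goes from $s$ to $u_1$, $B$ goes from $u_1$ to $u_2$, and $C$ goes from $u_2$ to $t$; in particular $B\to D$ is a nontrivial closed path at $v$. I will show that the ``$B$-loop'' can be excised to produce a strictly smaller admissible triple $(P_Y',P_L,D')$, yielding the desired contradiction.

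The main geometric step uses the planarity of $D$. Since the boundary cycle $P_L\cdot P_Y^{-1}$ of $D$ visits $v$ twice, the vertex $v$ is a cut $0$-cell of $D$ in the sense of Definition~\ref{def:cut-tree}. Cutting $D$ at $v$ yields two subdiagrams that meet only at $v$: one of them, $D_B$, has boundary cycle $B$, and the complementary one, $D'$, has boundary cycle $P_L\cdot (AC)^{-1}$. Routine counting then gives $\area(D')=\area(D)-\area(D_B)\leq \area(D)$ and $|\partial D'|=|\partial D|-|B|<|\partial D|$, since $|B|\geq 1$.

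The subtle point I expect to be the main obstacle is verifying that $P_Y':=AC$ is a genuine path in $Y$, rather than merely in $X$, so that $(P_Y',P_L,D')$ is an admissible candidate for the minimization problem defining $D$. Here I will use that $Y$ is a subcomplex of $X$: the map $P_Y\to Y\hookrightarrow X$ coincides with the composition $P_Y\to D\to X$ coming from the boundary of the disk diagram, so the images of $u_1$ and $u_2$ in $Y$ agree with the image of $v$ in $X$. Hence the concatenation $AC$ is well-defined in $Y$ and still runs from $s$ to $t$. With this in hand, $(\area(D'), |\partial D'|)$ is strictly less than $(\area(D), |\partial D|)$ in the lexicographic order, contradicting the minimality built into the choice of $(P_Y,P_L,D)$.
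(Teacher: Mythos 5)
Your proof is correct and follows essentially the same route as the paper's: excise the loop $B=V$ at the cut $0$-cell to obtain a smaller admissible triple $(AC, P_L, D')$, contradicting the lexicographic minimality of $(\area(D),|\partial D|)$. The paper phrases the decomposition as $P_Y=U_1VU_2$ with $V$ chosen so that its interior avoids $P_L$ in $D$ (which streamlines the claim that cutting at $v$ cleanly separates off a subdiagram); you may want to similarly take $(u_1,u_2)$ innermost so that $B$ does not itself revisit $v$, but the key steps — $v$ being a cut $0$-cell, the complexity drop $|\partial D'|=|\partial D|-|B|<|\partial D|$, and the verification that $AC$ is a path in $Y$ because $P_Y\to Y\hookrightarrow X$ agrees with $P_Y\to D\to X$ — match the paper's argument.
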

\begin{proof}
Suppose that $P_Y\to D$ is not embedded. Then $P_Y$ is a concatenation $U_1VU_2$ such that (a) the terminal point of $U_1\to P_L \to  D$ and the initial point of $U_2\to P_L \to  D$ is a $0$-cell $u$  and  (b) $V\to P_Y \to  D$ is a nontrivial path with no internal $0$-cells in common with $P_L\to D$. Observe that $u$ is a cut $0$-cell of $D$. Let $\dot P_Y \to D$  be the path $U_1U_2 \to D$ and observe that $\dot P_Y \to D \to X$ factors through $Y\to X$ and $\dot P_Y \to X$ connects the endpoints of $\ell$. The paths $\dot P_Y \to D$ and $P_L \to D$ bound  a subdiagram $\dot D$ of $D$. Since $V\to D$ is nontrivial, the complexity of $\dot D$ is strictly smaller than the complexity of $D$. Then $\dot P_Y$ and $\dot D$ violate the minimality of $D$.
\end{proof}

\begin{figure}
\includegraphics[width=.35\textwidth]{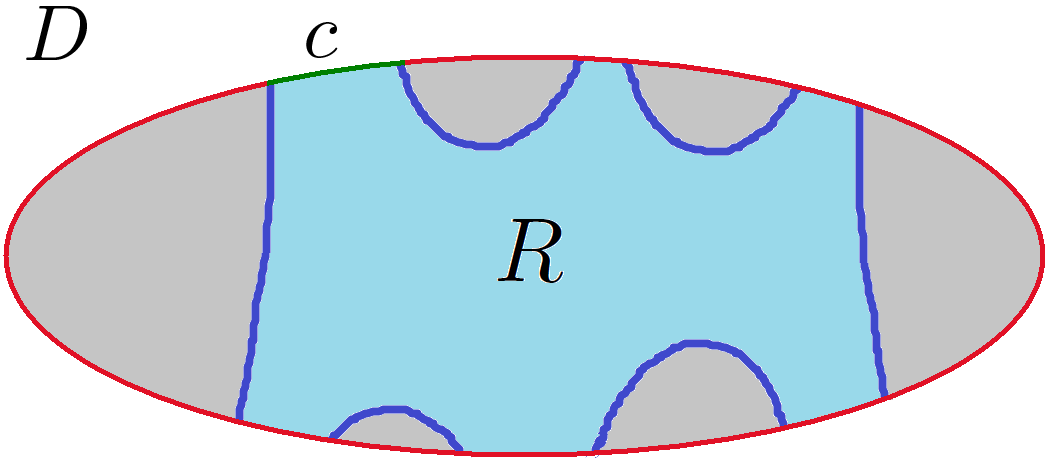} \hspace{.1\textwidth}
\includegraphics[width=.35\textwidth]{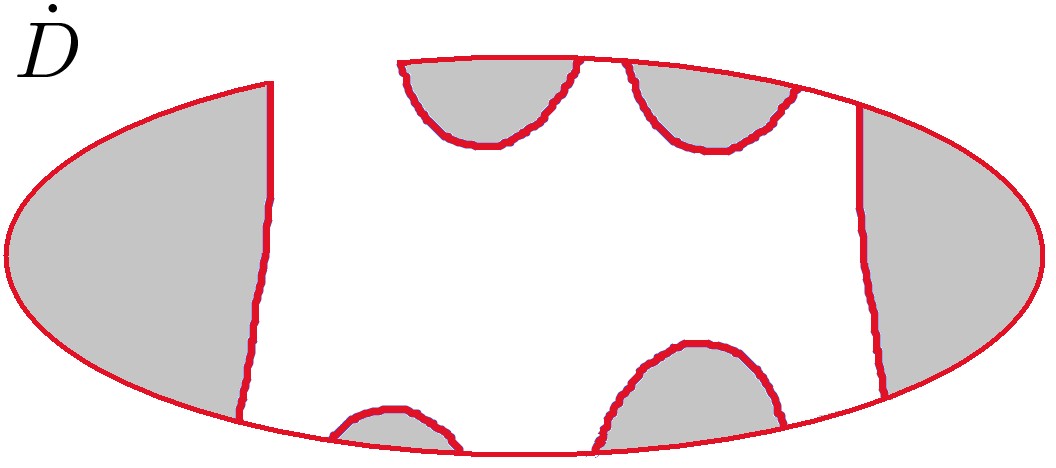}  \label{fig:2cell-relevant-cut.eps}
\caption{If a cut-component of $D$ has a $2$-cell $R\to D$ with two different boundary arcs intersecting $\ell$, then $D$ has no minimal complexity. Illustration of the reduction of complexity in the proof of Lemma~\ref{lem:reduction-1} \label{fig:reduction-2cell}}
\end{figure}

\begin{lem}[$2$-cells of $D$ intersecting $P_L$]\label{lem:reduction-1}
If $R\to D$ is a $2$-cell such that $\partial R\to D$ intersects $P_L\to D$, then the interior of $R\to X$ does not intersect $\ell$.
\end{lem}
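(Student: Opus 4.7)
The plan is proof by contradiction exploiting the minimality of $(\area(D), |\partial D|)$ in the lexicographical order. Suppose $R\to D$ is a $2$-cell with $\partial R$ intersecting $P_L$, and assume for contradiction that the interior of $R\to X$ meets $\ell$. Then the open $2$-cell of $X$ that $R$ maps to must appear in the carrier sequence $R_1, \dots, R_n$; call this cell $R_j$. Hence $\bar R_j$ sits inside $L$, and the full attaching cycle $\partial R_j$ is a subcomplex of $L$.

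I would then perform a surgery eliminating $R$ from $D$. Let $\alpha$ be a maximal arc of $P_L$ lying on $\partial R$, and let $\beta$ be the complementary arc of the cycle $\partial R$, so that $\alpha$ and $\beta$ share endpoints and concatenate to $\partial R$. Both $\alpha \to X$ and $\beta \to X$ have image in $\partial R_j \subset L$; in particular, writing $P_L = U \alpha V$, the path $P_L' = U \beta V$ lifts to an edge path in $L$ from $s$ to $t$. Form $D'$ from $D$ by deleting the open $2$-cell $R$ together with the interior of $\alpha$. Then $D' \to X$ is a disk diagram for $P_L' \cdot P_Y^{-1}$ with $\area(D') = \area(D) - 1$, contradicting the minimality of $(\area(D), |\partial D|)$.

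The main technical obstacle is verifying that this surgery actually goes through in every configuration of $\partial R \cap P_L$. The cleanest case is when $\partial R \cap P_L$ is a single arc and its complement $\beta$ in $\partial R$ lies in the interior of $D$; here the argument above is essentially immediate. In general, $\partial R \cap P_L$ may have several arc components, or $\beta$ could share $1$-cells with $P_Y$ or revisit $P_L$; in those cases one must either apply the surgery inductively on connected components of $\partial R \cap P_L$, or appeal to the full lexicographic minimality of $(\area(D), |\partial D|)$ (using the $|\partial D|$ component to trim pathological configurations) to produce a strictly smaller pair and reach the same contradiction.
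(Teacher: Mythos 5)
Your plan is the same one the paper uses: argue by contradiction, observe that if the interior of $R$ meets $\ell$ then $\partial R\to X$ factors through $L\to X$, and perform a surgery on $D$ that cuts out $R$ (decreasing $\area$) while rerouting $P_L$ around the other side of $\partial R$. The contradiction with minimality is identical in spirit.

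The one genuine shortfall is in the surgery step, and your own third paragraph flags it without resolving it. You choose to remove a \emph{maximal arc} $\alpha$ of $P_L \cap \partial R$, which immediately forces you into case analysis (several arc components, $\beta$ revisiting $P_L$ or $P_Y$, and also the degenerate case where $\partial R\cap P_L$ is a single $0$-cell so there is no arc to remove). The paper sidesteps all of this by removing a \emph{single} cell $c$ of $P_L$ lying on $\partial R$ --- a $1$-cell if one exists, otherwise a $0$-cell. Writing $\partial R = cQ$ and $P_L = U_1 c U_2$, one removes the interiors of $c$ and $R$ to get $\dot D$, sets $\dot P_L = U_1 Q^{-1} U_2$ (still a path in $L$ since all of $\partial R$ maps into $L$), and $\area(\dot D) = \area(D)-1$. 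This works uniformly regardless of the combinatorics of $\partial R\cap P_L$; in particular the new path $\dot P_L$ is not required to be embedded or disjoint from $P_Y$, so most of the ``pathological configurations'' you were worried about are red herrings. The $0$-cell case is handled separately by removing $R$, removing $c$, and re-attaching two copies of $c$ to keep $\dot D$ simply connected. In short: your idea is correct, but using a single cell rather than a maximal arc eliminates the unresolved case analysis that leaves your proposal incomplete, and you also omit the $0$-cell intersection case entirely.
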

\begin{proof}
If the interior of $R\to X$ intersects $\ell$, then $\partial R\to X$ factors through $L\to X$.  Since $\partial R\to D$ intersects $P_L\to D$, it follows that $P_L$ is a concatenation $U_1 c U_2$ where $c$ is a cell mapped into  $\partial R\to D$. In particular, $\partial R$ is a concatenation $cQ$ where $Q\to \partial R$ is a path; if $c$ is a $0$-cell then $c$ is the initial point of the path $Q$. Let $\dot D$ be the subdiagram of $D$ obtained by removing the interiors of $c$ and $R$; in the case that $c$ is a $0$-cell, remove the interior of $R$, remove $c$, and add two copies of $c$ to obtain a simply connected diagram.  Let  $\dot P_L \to L$ be the path $U_1 Q^{-1} U_2$ (or $U_1 Q U_2$) and observe that $\dot P_L$ and $\dot D$ violate the minimality of $D$.  See Figure~\ref{fig:reduction-2cell}.
\end{proof}

\begin{lem}[Internal Paths in $D$]\label{lem:no-internal-paths}
If there is a nontrivial internal path $T\to D$, as defined in~\ref{defn:internal}, such that $T\to D\to X$ factors through $L\to X$ and has  endpoints in $P_L\to D$ then $D$ does not have minimal complexity.
\end{lem}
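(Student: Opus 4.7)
The plan is to use $T$ to cut off a subdiagram of $D$ and replace the corresponding piece of $P_L$, producing a disk diagram of strictly smaller area and thereby contradicting the minimality of $(\area(D),|\partial D|)$. Let $p,q$ be the endpoints of $T$ and write $P_L=U_1VU_2$, where $V$ is the subpath of $P_L$ from $p$ to $q$. The closed path $TV^{-1}$ bounds a subdiagram $D_1\subseteq D$, and the complementary subdiagram $D_2$ has boundary cycle $(U_1TU_2)\cdot P_Y^{-1}$.

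Set $P_L':=U_1TU_2$. Since $T\to X$ factors through $L\to X$ by hypothesis, and since $U_1$ and $U_2$ already factor through $L\to X$ as subpaths of $P_L\to L$, the composite $P_L'\to X$ factors through $L\to X$; thus $P_L'\to L$ is a valid combinatorial path connecting the endpoints $s,t$ of $\ell$. Then $D_2\to X$ is a disk diagram with boundary cycle $P_L'\cdot P_Y^{-1}$, and $\area(D_2)=\area(D)-\area(D_1)$. It therefore suffices to prove $\area(D_1)\geq 1$: once this is done, the triple $(P_Y,P_L',D_2)$ has strictly smaller area than $(P_Y,P_L,D)$, contradicting minimality.

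The main obstacle will be verifying $\area(D_1)\geq 1$, and this is exactly where the internal hypothesis enters. By Definition~\ref{defn:internal}, every interior $0$-cell of $T$ maps to a $0$-cell of $D$ whose link contains an embedded cycle; in a disk diagram this forces such a $0$-cell to lie in the topological interior $D^\circ$. Since $V\subseteq P_L\subseteq \partial D$ has its interior $0$-cells on $\partial D$, the paths $T$ and $V$ cannot coincide, so the region $D_1$ they bound is not a degenerate (collapsed) tree and must therefore contain at least one $2$-cell. The length-$1$ case is handled identically: such a $T$ is necessarily an interior chord of $D$, since otherwise $T$ would equal the boundary edge $V\subseteq P_L$ and furnish no shortcut; as an interior chord, $T$ separates $D$ into two subdiagrams each meeting a $2$-cell of $D$ along $T$. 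Hence $\area(D_1)\geq 1$ in all cases, and the reduction succeeds, completing the plan.
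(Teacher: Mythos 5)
Your proof takes essentially the same approach as the paper's: decompose $P_L=U_1VU_2$ at the endpoints of $T$, replace $V$ by $T$ to obtain a new path into $L$, and observe that the complementary subdiagram $D_2$ has strictly smaller complexity. The paper compresses the final step into one sentence (``since $T\to D$ is internal, the complexity of $\dot D$ is strictly smaller''), whereas you spell out that $\area(D_1)\geq 1$; this is the right thing to justify.

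Two small imprecisions in your elaboration are worth tightening. First, the inference ``$T\neq V$, hence $D_1$ is not a degenerate tree, hence $D_1$ contains a $2$-cell'' is too quick as stated: a priori $D_1$ could be a tree strictly larger than $T$, with $V$ traversing it with backtracking. The clean argument (and what the internality hypothesis is really for) is local: for an interior $0$-cell $a$ of $T$ lying in $D^\circ$, the link $\link(a,D)$ is an embedded cycle, and the two edges of $T$ at $a$ split this cycle into two nonempty arcs; each arc carries a corner of a $2$-cell, so $D_1$ has a $2$-cell incident to $a$. This also removes the need to appeal to the non-degenerate/degenerate tree dichotomy or to rule out backtracking of $P_L$ separately. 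Second, in the length-$1$ case you assert $T$ ``is necessarily an interior chord, since otherwise $T$ would equal the boundary edge $V$ and furnish no shortcut''; this doesn't actually derive interiority from the stated hypotheses, it merely notes the argument would fail otherwise. The lemma as written is vacuous/degenerate in that boundary-edge case, and the paper's own proof glosses over it in exactly the same way, so this is a shared looseness rather than a gap peculiar to your write-up; in every application (Lemma~\ref{lem:at-most-one-l-component}) $T$ is genuinely interior. You also implicitly assume, as does the paper, that the lifts of $U_1$, $T$, $U_2$ to $L$ match at the junction $0$-cells; this is where the hypothesis that $T$ factors through $L$ (and in practice maps into $\ell$) is being used.
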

\begin{proof}
Suppose that $T\to D$ is an internal path satisfying the hypothesis of the lemma. Without loss of generality, we can assume that $T\to D$ is an embedding; indeed its image contains an internal and embedded path $T'\to D$ with the same endpoints as $T\to D$.
The endpoints of $T$ split the boundary path of $D$ as a concatenation of paths $UV\to \partial D$ such that $T$ and $U$ have the same endpoints and  $P_Y\to D$ is a subpath of $U^{-1}$ (the path $V$ may be trivial). Then $P_L\to D$  is a concatenation $U_1VU_2$ where each $U_i$ is a subpath of $U$.
Let $\dot P_L \to D$  be the path $U_1T^{-1}U_2$ and observe that $\dot P_L \to D \to X$ factors through $L\to X$. The paths $P_Y \to D$ and $\dot P_L \to D$ bound  a subdiagram $\dot D$ of $D$. Since $T\to D$ is internal, the complexity of $\dot D$ is strictly smaller than the complexity of $D$. Then $\dot P_L$ and $\dot D$ violate the minimality of $D$. See Figure~\ref{fig:reduction-internal-path}.
\end{proof}

\begin{lem}[$2$-cells of $D$ intersecting $\ell$]\label{lem:at-most-one-l-component}
Let $R\to D$ be a $2$-cell.  Suppose $\partial R\to D$ is a concatenation $S_1T_1 \cdots S_mT_m$ where each $S_i\to D$ is a  boundary path $S_i \to \partial D$, and each $T_i\to D$ is a nontrivial internal path in $D$.
Then at most one subpath $S_i\to X$ intersects $\ell$ and factors through $P_L\to X$.
\end{lem}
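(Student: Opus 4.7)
The plan is to argue by contradiction and apply Lemma~\ref{lem:no-internal-paths}. Assume two distinct boundary subpaths $S_i$ and $S_j$ both factor through $P_L \to X$ and both intersect $\ell$. I will produce a nontrivial internal path $T \to D$ with both endpoints on $P_L$ such that $T \to D \to X$ factors through $L \to X$; this will contradict the minimality of $D$.

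The first step exploits the CAT(0) geometry. Choose $p \in S_i \cap \ell$ and $q \in S_j \cap \ell$ in $X$, and let $R' \subseteq X$ denote the image of $R$ under $D \to X$. By Lemma~\ref{lem:reduction-1} the interior of $R'$ is disjoint from $\ell$. Since the cells of $X$ are convex, $R'$ is a convex subset of the CAT(0) space $X$, so the unique $X$-geodesic from $p$ to $q$ lies in $R'$. Combining these facts forces the sub-segment $\ell|_{p,q}$ of $\ell$ to lie on $\partial R'$. Therefore $\ell|_{p,q}$ traces one of the two arcs of the cycle $\partial R'$ joining $p$ and $q$, and every $0$- or $1$-cell met by that arc belongs to $L$.

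The second step is a planarity argument. Because $R$ is a $2$-cell of the disk diagram $D$, the cyclic order of $S_1, \dots, S_m$ around $\partial R$ matches their cyclic order along $\partial D$. Since $P_L$ is a connected arc of $\partial D$, those $S_k$ factoring through $P_L$ occupy a contiguous block in both cyclic orders. Thus one of the two arcs of $\partial R$ between $S_i$ and $S_j$, say $\alpha = T_i S_{i+1} T_{i+1} \cdots T_{j-1}$ after relabeling, is the one whose intermediate $S_k$ all lie on $P_L$. Since $P_L$ stays close to $\ell$ (Lemma~\ref{lem:PLell}) and the map $D \to X$ is a homeomorphism on the open $2$-cell $R$, the arc of $\partial R'$ traced by $\ell|_{p,q}$ is the image of $\alpha$ rather than the opposite arc. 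Consequently $\alpha$, and in particular its sub-arc $T_i$, maps into $L$.

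The final step invokes Lemma~\ref{lem:no-internal-paths}. The interior of $T_i$ lies in the interior of $D$, so its interior $0$-cells have full links in $D$; via the near-immersion $D \to X$ they map to $0$-cells of $X$ whose links contain embedded cycles, so $T_i$ is internal in the sense of Definition~\ref{defn:internal}. Its endpoints, the end of $S_i$ and the start of $S_{i+1}$, both lie on $P_L$ by the choice of $\alpha$. Applying Lemma~\ref{lem:no-internal-paths} then produces a diagram of strictly smaller complexity than $D$, contradicting the minimality of $D$. The main obstacle in this plan is the direction-matching claim: verifying rigorously that $\ell|_{p,q}$ corresponds to $\alpha$ and not to the opposite arc of $\partial R$. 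This requires combining the cyclic-order consequence of planarity, the convexity of $R'$, and the fact established earlier that $P_L$ and $\ell$ stay on a common side of $\partial R'$ throughout the interval $[p,q]$; it is the step most in need of careful bookkeeping.
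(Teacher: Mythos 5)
Your proposal takes the same overall approach as the paper: use convexity of the $2$-cell $R$ to argue that the sub-segment of $\ell$ from $p$ to $q$ either enters the interior of $R$ (excluded by Lemma~\ref{lem:reduction-1}) or lies on $\partial R$, in which case a $T_k$ maps into $\ell$ and Lemma~\ref{lem:no-internal-paths} gives a contradiction. The paper's own proof is exactly this, stated tersely: ``By convexity of $R$, either the interior of $R\to X$ intersects $\ell$ or else there is a path $T_i\to D$ which maps into $\ell$.'' It does not discuss which arc of $\partial R$ the geodesic traces.

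Where you try to go further is in pinning down which of the two arcs of $\partial R$ the segment $\ell|_{p,q}$ traces, so as to ensure the $T_k$ you find has both endpoints on $P_L$ --- a hypothesis Lemma~\ref{lem:no-internal-paths} genuinely needs. This is a reasonable thing to worry about, since the paper's one-line invocation of Lemma~\ref{lem:no-internal-paths} glosses over it. However, the two steps you add do not close the gap. First, the planarity step asserts that the $S_k$ lying on $P_L$ form a contiguous block because ``$P_L$ is a connected arc of $\partial D$''; but only $P_Y\to D$ is shown to be embedded (Lemma~\ref{lem:PYembedded}), and $D$ may have cut $0$-cells, so $P_L\to D$ need not be a simple arc and the contiguity claim needs more care. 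Second, the direction-matching step --- that $\ell|_{p,q}$ traces the arc $\alpha$ with intermediate $S_k$ on $P_L$ rather than the opposite arc --- is, as you acknowledge, not actually established: citing Lemma~\ref{lem:PLell} and the local homeomorphism on the open cell $R$ does not force the geodesic onto the $P_L$-side arc.

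Also note that the direction-matching is arguably aimed at the wrong target: whichever arc $\ell|_{p,q}$ traces, that arc must traverse at least one full internal segment $T_k$ (since $p$ and $q$ lie in distinct $S_i$ and $S_j$), so ``some $T_k$ maps into $\ell$'' holds regardless. The actual thing to verify is that this $T_k$ has its endpoints on $P_L$. One endpoint (adjacent to $S_i$) is automatically on $P_L$; for the other, one must rule out the possibility of landing on an interior $0$-cell of $P_Y$ (Lemma~\ref{lem:cut-cells-2} handles the cut-$0$-cell case, but not the non-cut case). Your proposal correctly identifies the subtlety, but the step you yourself flagged as needing careful bookkeeping is the one that remains unproved.
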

\begin{proof}
Suppose that two different paths $S_i\to X$ intersect  $\ell$ and factor  through $P_L\to X$. By convexity of $R$, either the interior of $R\to X$ intersects $\ell$ or else there is a path $T_i\to D$ which maps into $\ell$. The former case is impossible by
Lemma~\ref{lem:reduction-1} and the latter case is impossible by Lemma~\ref{lem:no-internal-paths}.
\end{proof}

\begin{figure}
\includegraphics[width=.65\textwidth]{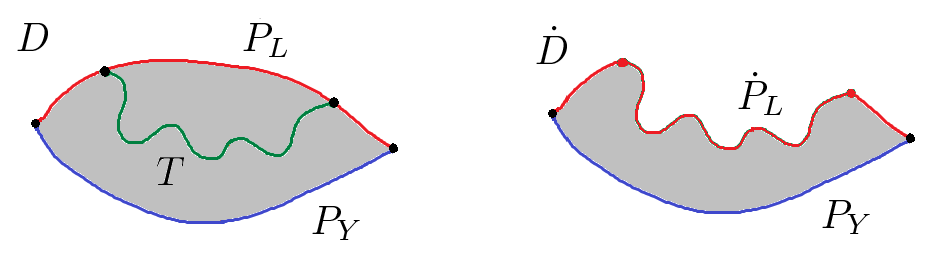}
\caption{Illustration of the reduction of complexity in the proof of Lemma~\ref{lem:no-internal-paths}.\label{fig:reduction-internal-path}}
\end{figure}

\begin{lem}[Cut $0$-cells of $D$]\label{lem:cut-cells-2}
Let  $u$ be a cut $0$-cell of $D$.
\begin{enumerate}
\item \label{lem:cc1} If  $u$ is in $P_Y\to D$ then $u$ is also in $P_L \to D$.
\item \label{lem:cc2}  If $u\to D \to X$ is contained in $\ell$, then $u$ is in $P_Y\to D$.
\end{enumerate}
\end{lem}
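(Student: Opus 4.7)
The plan is to derive both parts from a single stronger statement: every cut $0$-cell of $D$ lies on both $P_L\to D$ and $P_Y\to D$. The two ingredients are the embeddedness of $P_L$ and $P_Y$ in $D$, together with the topological observation that a cut $0$-cell of a disk diagram is visited at least twice by the boundary cycle.

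First, I would show that $P_L\to D$ is embedded by the same type of minimality argument used in Lemma~\ref{lem:PYembedded}. If a $0$-cell $v$ were visited twice by $P_L$, then $P_L = U_1 \cdot C \cdot U_2$ for some nontrivial closed subpath $C$ based at $v$. Since $D$ is simply connected and $C$ is a closed subpath of $\partial D = P_L \cdot P_Y^{-1}$, it bounds a subdiagram $D_C\subseteq D$. Replacing $P_L$ with $U_1\cdot U_2$ (still a path in $L$ from $s$ to $t$) and $D$ with $D\setminus D_C$ produces a valid disk diagram whose area is no larger and whose boundary length is strictly smaller by $|C|>0$. This contradicts the minimality of $(\area(D),|\partial D|)$ in the lexicographic order; so $P_L\to D$ is embedded. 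By Lemma~\ref{lem:PYembedded}, $P_Y\to D$ is also embedded.

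Second, I would argue that every cut $0$-cell $u$ of the disk diagram $D$ lies on $\partial D$ and appears at least twice in the boundary cycle. A $0$-cell in the topological interior of $D$ has a disk neighborhood, so its removal does not locally disconnect $D$; hence a cut $0$-cell must lie on $\partial D$. Moreover, a $0$-cell on $\partial D$ whose link in $D$ is connected cannot be a cut $0$-cell, so the link of a cut $0$-cell has at least two connected components, and each such component corresponds to a separate visit of $u$ by the boundary cycle $P_L\cdot P_Y^{-1}$.

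Combining these facts, since $P_L$ and $P_Y$ each visit any $0$-cell at most once, a cut $0$-cell $u$, appearing at least twice in $P_L \cdot P_Y^{-1}$, must lie on both $P_L$ and $P_Y$. Part~\eqref{lem:cc1} and Part~\eqref{lem:cc2} follow at once. The hypothesis in Part~\eqref{lem:cc2} that $u$ maps into $\ell$ is not strictly needed for this unified argument; the proof merely specializes the stronger fact to cut $0$-cells on $\ell$. The main subtlety to address carefully is the topological claim that a cut $0$-cell on $\partial D$ is visited at least twice by the boundary cycle; this is a standard consequence of the planar embedding of the contractible disk diagram, but deserves explicit justification in this combinatorial setting.
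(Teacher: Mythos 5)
Part~\eqref{lem:cc1} of your argument agrees with the paper: since $P_Y\to D$ is embedded and a cut $0$-cell of a disk diagram is visited at least twice by its boundary cycle $P_L P_Y^{-1}$, a cut $0$-cell lying on $P_Y$ must also lie on $P_L$.

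However, the claim that $P_L\to D$ is embedded --- which you use as the unifying mechanism for Part~\eqref{lem:cc2} --- has a genuine gap, and the flag you raise at the end (``the hypothesis that $u$ maps into $\ell$ is not strictly needed'') is precisely where the argument breaks. When $P_L\to D$ visits the same $0$-cell $u$ of $D$ twice, the two corresponding $0$-cells of the domain of $P_L$ map to the same $0$-cell of $X$ (via $D\to X$), but they need not map to the same $0$-cell of $L$: the map $L\to X$ is only a near-immersion and is not injective in general, so the carrier $L$ can contain distinct $0$-cells with a common image in $X$. If the two visits correspond to distinct points of $L$, then the spliced path $U_1 U_2$ is not a valid edge path in $L$ --- $U_1$ terminates and $U_2$ begins at different $0$-cells of $L$ --- so it does not produce a competing choice of $P_L\to L$ and the minimality of $(\area(D),|\partial D|)$ cannot be invoked. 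Consequently you cannot conclude that $P_L\to D$ is embedded, and Part~\eqref{lem:cc2} does not follow from the unified statement.

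The paper's proof of Part~\eqref{lem:cc2} is written exactly to sidestep this issue. It assumes $u$ is not in $P_Y$, so that $u$ is visited more than once by $P_L\to D$, and then uses the hypothesis that the image of $u$ lies on $\ell$ to conclude that all points of $P_L$ mapping to $u$ in $D$ map to the same $0$-cell of $L$. This uses that $\ell\to X$ is injective and factors through $L\to X$, so the preimage in $L$ of the image of $u$ that can appear along $P_L$ is unique. Only then does the splice $U_1 U_2$ give a genuine path in $L$, and the usual complexity reduction applies. In other words, the hypothesis you thought was dispensable is the very thing that licenses the cut-and-paste. To repair your write-up you would need to either restrict the splicing argument to $0$-cells mapping into $\ell$ (essentially reproducing the paper's proof) or add an explicit argument that the two visits determine the same $0$-cell of $L$, which is false without the $\ell$-hypothesis.
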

\begin{proof}
 $P_Y\to D$ is embedded by Lemma~\ref{lem:PYembedded} and hence the first statement is immediate.  To prove the second statement, suppose the image of $u$ in $X$ is contained in $\ell$ and $u$ is not in $P_Y\to D$. Hence $u$ is in $P_L\to D$. Observe that all points of $P_L$ that map to  $u \in D$ map to the same point in $L$. This follows since $\ell \to X$ is injective, contains the cell $u\to X$,  and factors through $L\to X$. Let $V\to D$ be the subpath of $P_L\to D$ starting and ending at $u$ and traveling around the components of $D-U$ not containing $P_Y\to D$.  Express $P_L$ as a concatenation $U_1VU_2$ and note that $V\to   D$ and $P_Y\to D$ are disjoint.  Let $\dot P_L$ be the path $U_1U_2$. Observe that $\dot P_L \to X$ is a path connecting the endpoints of $\ell$, factors through $L\to X$, and factors through $P_L\to D$. Moreover, the paths $\dot P_L \to D$ and $P_Y \to D$ bound  a subdiagram $\dot D$ of $D$. Since $V\to D$ is nontrivial, the complexity of $\dot D$ is strictly smaller than the one of $D$. Then $\dot P_L$ and $\dot D$ violate the minimality of $D$.
\end{proof}

\subsection{Good Path in $D$}

The main goal of this subsection is to prove Proposition~\ref{prop:qcmain1}.

\begin{defn}[Good paths in $D$]\label{defn:goodpaths}
A path $Q\to D$  is a \em{good path} if for every $0$-cell $c$ in the interior of the path $Q\to D$, either $c$ is an internal cell of the diagram $D$, or the image of $c\to D\to X$ does not intersect $\ell$.
\end{defn}

\begin{prop}[Good paths with terminal point in $P_Y$]\label{prop:qcmain1}
Let $u$ be a $0$-cell of $\partial D$. Suppose $u$ is not an internal $0$-cell of a boundary arc of $D$.  Then there is a good path $Q\to D$ from $u$ to a $0$-cell of $P_Y\to D$.
\end{prop}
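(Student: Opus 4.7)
The strategy is to show that the \emph{bad} $0$-cells of $D$---meaning those on $\partial D$ that map to $\ell\subset X$ and are not internal to $D$---cannot collectively separate $u$ from $P_Y$ in the $1$-skeleton of $D$. A combinatorial edge path from $u$ to a $0$-cell of $P_Y$ whose interior $0$-cells avoid all bad $0$-cells is precisely a good path by Definition~\ref{defn:goodpaths}, so the proposition reduces to this separation claim. The case $u\in P_Y$ is handled by the constant path at $u$, so I may assume $u$ is an interior $0$-cell of $P_L$; since $u$ is not an internal $0$-cell of a boundary arc, there is either an internal edge at $u$ or a $2$-cell at $u$ whose corner leads into the interior of $D$, giving a first step off the boundary.

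The first observation rules out obstruction by a single bad $0$-cell: by the contrapositive of Lemma~\ref{lem:cut-cells-2}\eqref{lem:cc2}, every bad $0$-cell not on $P_Y$ fails to be a cut $0$-cell of $D$. So no single bad $0$-cell separates $u$ from $P_Y$. The plan is to extend this observation to collections of bad $0$-cells by exploiting planarity of $D$ and the minimality of the pair $(\area(D),|\partial D|)$.

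I would then argue by contradiction. Suppose the set $B$ of bad $0$-cells in $P_L\setminus P_Y$ collectively separates $u$ from $P_Y$ in $D^{(1)}$. Using planarity of the disk $D$ and choosing the separating configuration innermost, I extract two elements $c_1,c_2\in B$ on $P_L$ and an embedded internal path $T\to D$ from $c_1$ to $c_2$ such that $T$, together with the subarc of $P_L$ from $c_1$ to $c_2$, bounds a subdiagram of $D$ containing $u$ but not $P_Y$. Once $T\to D\to X$ is shown to factor through $L\to X$, Lemma~\ref{lem:no-internal-paths} contradicts the minimality of $(\area(D),|\partial D|)$, completing the proof.

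The hard part will be the final factorization step. The endpoints $c_1,c_2$ of $T$ already map to $\ell\subseteq L$, but the interior of $T$ need not. To force it onto $L$, I would combine Lemma~\ref{lem:reduction-1} (each $2$-cell of $D$ meeting $P_L$ has interior disjoint from $\ell$) with Lemma~\ref{lem:at-most-one-l-component} and the convexity of cells of $X$; together with the innermost choice of the separating subdiagram, this should push $T$ to lie along the carrier $L$. As a fallback, one could abandon the contradiction route and instead construct $Q$ directly by iteratively walking from $u$ along $\partial D$ and bypassing each encountered bad $0$-cell through an incident $2$-cell---which is possible precisely because such $0$-cells are not cut $0$-cells---with termination guaranteed by finiteness of $D$ and the same two lemmas controlling how $2$-cells near $P_L$ can meet $\ell$.
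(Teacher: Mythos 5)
Your proposal takes a genuinely different route from the paper's. The paper argues \emph{constructively}: it starts with an arbitrary path $S\to D$ from $u$ to $P_Y$, extracts from it a chain of cells $c_1,\dots,c_n$ (each an open $2$-cell or a disconnecting $1$-cell, consecutive ones sharing a cut $0$-cell or a $1$-cell), and then routes the good path $Q = P_1\cdots P_n$ along the boundaries of these cells using Lemma~\ref{lem:basic-good-path}, which you never invoke. That lemma is the crux: Lemma~\ref{lem:at-most-one-l-component} ensures at most one boundary arc of each relevant $2$-cell is ``blocked'' (intersects $\ell$ and lies on $P_L$), so one can always walk around the rest of $\partial R$; Lemma~\ref{lem:concatenation-goodpaths} then glues the pieces.

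Your primary (contradiction) approach has two genuine gaps. First, the extraction step is not justified, and I think it cannot work as stated: if your embedded internal path $T$ has any interior $0$-cells, those are internal $0$-cells of $D$ (bad $0$-cells live on $\partial D$), hence not bad, so a path from $u$ passing through one of them and continuing past $T$ already escapes, contradicting the separation hypothesis rather than realizing it. This forces $T$ to be a single edge or trivial, a case your proposal doesn't address. Second, even with a single-edge $T$ whose two endpoints map to $\ell$, there is no reason for $T\to D\to X$ to factor through $L\to X$---the carrier $L$ is built only from closures of open cells that $\ell$ actually meets, and an edge with both endpoints on $\ell$ need not be among them---so Lemma~\ref{lem:no-internal-paths} does not apply. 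You flag this factorization as the ``hard part,'' but the sketch (combining Lemmas~\ref{lem:reduction-1},~\ref{lem:at-most-one-l-component}, convexity, and an innermost choice) does not establish it, and I don't see how it could, since those lemmas constrain $2$-cells and their boundary arcs, not arbitrary internal edges. Your ``fallback'' is pointed in the right direction---it is essentially the paper's constructive strategy---but it is too underspecified to check: walking around a $2$-cell to bypass one bad $0$-cell may expose new bad $0$-cells on that $2$-cell's boundary, and controlling this is exactly the content of Lemma~\ref{lem:basic-good-path}, whose two cases (all $S_i$ on $P_L$, versus some $S_i$ on $P_Y$) and reliance on Lemma~\ref{lem:at-most-one-l-component} need to be spelled out for termination at $P_Y$ to be guaranteed. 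Finally, a minor point: your opening claim that there is always ``a first step off the boundary'' at $u$ is not quite right when $u$ is a cut $0$-cell of valence two with no incident $2$-cell; the paper accommodates this by allowing disconnecting $1$-cells in the chain.
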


The following lemma is immediate.
\begin{lem}[Concatenation of good paths]\label{lem:concatenation-goodpaths}
Let $P_1\to D$ and $P_2\to D$ be good paths such that the terminal point $w$ of $P_1$ equals the initial point of $P_2$. Suppose the image of $w$ in $D\to X$ does not intersect $\ell$. Then the concatenation $P_1P_2\to D$ is a good path.
\end{lem}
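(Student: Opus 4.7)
The plan is to unwind the definition of a good path (Definition~\ref{defn:goodpaths}) and check the required condition at every interior $0$-cell of the concatenation $P_1P_2\to D$. Writing $c$ for an arbitrary interior $0$-cell of $P_1P_2$, there are exactly three cases to consider, and the proof is a verification in each case.

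First I would observe that the interior $0$-cells of $P_1P_2\to D$ split into three disjoint classes: (i) $0$-cells lying in the interior of $P_1\to D$, (ii) $0$-cells lying in the interior of $P_2\to D$, and (iii) the junction $0$-cell $w$, which is the terminal point of $P_1$ and the initial point of $P_2$ but now lies in the interior of the concatenation. In case (i) the hypothesis that $P_1\to D$ is good gives immediately that $c$ is either internal in $D$ or its image in $X$ avoids $\ell$. Case (ii) is symmetric, using that $P_2\to D$ is good.

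For case (iii), the statement's own hypothesis is exactly what is needed: since the image of $w$ in $X$ does not meet $\ell$, the second disjunct of the good-path condition holds at $w$. Combining the three cases, every interior $0$-cell of $P_1P_2\to D$ satisfies the defining condition of a good path, which concludes the proof.

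There is no real obstacle here, since the lemma is just a bookkeeping consequence of the definition; the only conceptual point worth noting is why the hypothesis on $w$ cannot be dropped, namely that $w$ might fail to be internal in $D$ (for example, if $w$ lies on $\partial D$), in which case without the assumption on its image one could not verify the good-path condition at the junction.
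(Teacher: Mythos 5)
Your proof is correct and amounts to exactly the definition-chase that makes the paper call this lemma ``immediate'' without supplying an argument; there is no alternative approach here, and your three-way case split on interior $0$-cells (interior of $P_1$, interior of $P_2$, and the junction $w$) is the right bookkeeping.
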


\begin{lem}[Good Paths along a Boundary of a 2-Cell]\label{lem:basic-good-path}
Let $R\to D$ be a $2$-cell such that $\partial R$ is a concatenation $S_1T_1 \cdots S_mT_m$ where each $S_i\to D$ is a  boundary arc $S_i \to \partial D$ and each $T_i\to D$ is a nontrivial internal path in $D$.
\begin{enumerate}
\item \label{lem:basic(1)} Suppose each $S_i\to D$ factors through $P_L\to D$.   
If $u$ and $v$ are distinct $0$-cells in the intersection of $\partial R\to D$ and $\partial D$ and if $u$ and $v$ are not internal $0$-cells of a boundary arc of $D$, then there is a good path  between $u$ and $v$. 
\item \label{lem:basic(2)}Suppose $\partial R \to D$ intersects $P_L\to D$ but some  $S_i\to D$ does not factor through $P_L\to D$.
If $u$ is a $0$-cell in the intersection of $\partial R\to D$ and $\partial D$, and $u$ is not an internal cell of a boundary arc of $D$, then there is a good path from $u$ to a $0$-cell $v$ in the intersection of $\partial R\to D$ and $P_Y\to D$.
\end{enumerate}
\end{lem}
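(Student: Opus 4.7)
In both parts the plan is to construct $Q$ by traveling from $u$ along $\partial R$ in a carefully chosen direction and then to verify that every interior $0$-cell of $Q$ is either internal in $D$ or has image disjoint from $\ell$.  By Lemma~\ref{lem:at-most-one-l-component} at most one $S_i$ factors through $P_L\to X$ and has image intersecting $\ell$; denote this subpath by $S_{i_0}$ when it exists.  In particular every $S_j$ with $j\neq i_0$ that factors through $P_L$ has image disjoint from $\ell$, so no $0$-cell of such an $S_j$ maps into $\ell$.

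For Part~\eqref{lem:basic(1)}, every $S_j$ factors through $P_L$, so only $S_{i_0}$ can meet $\ell$.  I take $Q$ to be the path around $\partial R$ from $u$ to $v$ whose interior contains neither the interior nor the endpoints of $S_{i_0}$.  The hypothesis that $u$ and $v$ are not internal $0$-cells of a boundary arc of $D$ guarantees in particular that neither lies in the interior of $S_{i_0}$, so this choice is available; if $u$ or $v$ happens to be an endpoint of $S_{i_0}$ it becomes an endpoint of $Q$ rather than an interior $0$-cell.  Goodness of $Q$ is then checked by cases on an interior $0$-cell $c$: if $c$ lies in the interior of some $T_j$ then $c$ is internal in $D$; if $c$ lies in the interior of some $S_j$ with $j\neq i_0$ then $c$'s image is on $S_j$, which misses $\ell$; and if $c$ is a junction between adjacent $S_j$ and $T_j$ pieces in the interior of $Q$ then $c$ is an endpoint of some $S_j$ with $j\neq i_0$, so if the image of $c$ lay on $\ell$ the image of $S_j$ would meet $\ell$, contradicting Lemma~\ref{lem:at-most-one-l-component}.

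For Part~\eqref{lem:basic(2)}, the hypothesis that some $S_i$ fails to factor through $P_L$ forces some $S_i$ to lie entirely on $P_Y\to D$: each $S_i$ is a boundary arc of $D$, and a boundary arc cannot cross the endpoints $s,t$ where $P_L$ and $P_Y$ meet because those endpoints have valence greater than $2$ in $D$.  Hence $\partial R$ contains at least one $0$-cell of $P_Y$.  I take $Q$ to be the initial segment of $\partial R$ starting at $u$ and traveling in a chosen direction until it first reaches a $0$-cell on $P_Y$, which is declared to be $v$.  The direction is chosen so that $Q$ meets no $0$-cell of $S_{i_0}$ before reaching $v$; since $S_{i_0}\subset P_L$ and its endpoints lie in $P_L\setminus P_Y$, they are separated along $\partial R$ from any $P_Y$-arc, so at least one of the two directions from $u$ has the required property.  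The interior of $Q$ then meets no cell of $S_{i_0}$ and no $0$-cell of any $S_j$ lying on $P_Y$, and the same case analysis as in Part~\eqref{lem:basic(1)} shows that $Q$ is good.

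The delicate point is the junction case in the verification: a junction $0$-cell in the interior of $Q$ must be identified as an endpoint of some $S_j$ with $j\neq i_0$ so that Lemma~\ref{lem:at-most-one-l-component} forbids its image from lying on $\ell$.  This is exactly what forces the precise choice of direction around $\partial R$, ensuring that $Q$ avoids not only the interior of $S_{i_0}$ but also its endpoints.  The hypothesis that $u$ (and in Part~\eqref{lem:basic(1)} also $v$) is not internal to any boundary arc of $D$ is precisely what makes this choice possible.
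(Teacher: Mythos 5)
Your proof takes essentially the same approach as the paper's: invoke Lemma~\ref{lem:at-most-one-l-component} to pin down at most one boundary arc $S_{i_0}$ that both factors through $P_L$ and meets $\ell$, then travel along $\partial R$ on the side that avoids $S_{i_0}$, and verify goodness by a case analysis on interior $0$-cells (interiors of $T_j$'s are internal to $D$; interiors of $S_j$'s with $j\neq i_0$ miss $\ell$; junctions are endpoints of such $S_j$'s and also miss $\ell$). The paper realizes this in both parts by declaring $S_{i_0}=S_1$ without loss of generality and taking the subpath of $T_1S_2\cdots S_mT_m$ between $u$ and $v$; your version is slightly more explicit in part~\eqref{lem:basic(2)} in that you stop at the \emph{first} $0$-cell of $P_Y$ encountered and choose the direction of travel to stay clear of $S_{i_0}$, which cleans up the verification that no $P_Y$-cell lies in the interior of $Q$. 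One small soft spot: you assert the endpoints of $S_{i_0}$ lie in $P_L\setminus P_Y$, which glosses over the possibility that one of them is $s$ or $t$; but in that degenerate case the endpoint in question is itself a legitimate terminal point $v$, so the argument still closes, and the paper's write-up is at the same level of informality on this point.
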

\begin{proof}
Suppose that all $S_i\to D$ factor through $P_L\to D$. By Lemma~\ref{lem:at-most-one-l-component},  without loss of generality,  we can assume that $S_i\to D\to X$ does not intersect $\ell$ for $i\geq 2$.
Then the path $P=T_1S_2\cdots S_nT_n \to D$ is a good path  since if a $0$-cell is mapped into $\ell$ then it is in the interior of $D$.  Since $u$ and $v$ are not internal cells of a boundary arc of $\partial D$,  $u$ and $v$ are in $P\to D$. The first statement follows.

Suppose $\partial R \to D$ intersects $P_L\to D$ but some $S_i\to D$ does not factor through $P_L\to D$.
Since $u$ is not an internal cell of a boundary arc, it follows that $u$ is an endpoint of $S_j \to D$ for some $j$. If $S_i\to D$ does not factor through $P_L\to D$, then it factors through $P_Y\to D$. Therefore $\partial R\to D$ has $0$-cells of $P_Y\to D$, and in particular there is a $0$-cell $v$ which is an endpoint of $S_i\to D$ for some $i$ and is in $P_Y\to D$ .
Lemma~\ref{lem:at-most-one-l-component} implies that at most one of the paths $S_i\to D\to X$ intersects $\ell$ and factors through $P_L\to D$. Without loss of generality, assume that if there is such a path then it is $S_1 \to D$. It follows that  $T_1S_2\cdots S_nT_n$ is a good path containing   $u$ and $v$.
\end{proof}

\begin{proof}[Proof of Proposition~\ref{prop:qcmain1}]
There is a sequence of cells $c_1, c_2, \ldots c_n$ in $D$ such that:
\begin{enumerate}
\item each $c_i$ is a open $2$-cell, or an open $1$-cell disconnecting $D$,
\item $\bar{c}_i \cap \bar{c}_{i+1}$ is either a cut $0$-cell or contains a $1$-cell, 
\item $u\in \bar{c}_1$ and $\bar{c}_n$ intersects $P_Y\to D$, 
\item $c_i$ is not equal to $c_{i+1}$,
\item\label{pp4} $\bar{c}_i$ does not intersect $P_Y\to D$ for $i<n$.
\end{enumerate}
Indeed, consider a path $S\to D$ from $u$ to  $P_Y\to D$. 
Each open $1$-cell of $S\to D$ is either 
disconnects $D$ or lies in the closure of $2$-cell. For consecutive edges $e_1,e_2$ that lie in $2$-cells either the $0$-cell $a$ that lies between $e_1$ and $e_2$ is a cut $0$-cell of $D$ or we add a sequence of $2$-cells corresponding to a path in $\link(a, D)$ between the vertices associated to $e_1$ and $e_2$. The last two properties are guaranteed by possibly passing to a subsequence.

If $n=1$ and $c_1$ is a $2$-cell, the proof concludes by letting $P\to D$ be a good path from $u$ to $P_Y\to D$ provided by Lemma~\ref{lem:basic-good-path}\eqref{lem:basic(2)}. If $n=1$ and $c_1$ is a $1$-cell, then the conclusion is immediate.

Suppose $n>1$. A good-path $Q\to D$ from $u$ to $P_Y\to D$ is constructed as a concatenation $Q=P_1\cdots P_n$ of good paths $P_i\to D$  as follows.    For each $i>0$, either let $b_i$ be a $1$-cell in $\bar c_i \cap \bar c_{i+1}$ or let $b_i$ be the cut $0$-cell of $D$ equal to $\bar c_i \cap \bar c_{i+1}$. Let $b_0=u$.  If $c_i$ is an isolated $1$-cell of $D$, then let $P_i\to D$ equal $c_i$. 

Suppose that $c_i$ is a $2$-cell and $i<n$.  Observe that if $b_i$ is a $0$-cell then $b_i$ is a cut $0$-cell of $D$ and, by Lemma~\ref{lem:cut-cells-2}, does not map into $\ell$. Suppose that $b_i$ is a $1$-cell. By Lemma~\ref{lem:at-most-one-l-component} it is impossible for both endpoints of $b_i$ to lie in $\partial D$ and lie on $P_L\to D$ and map into $\ell$.  If both endpoints of $b_i$ lie on $\partial D$, then property~\eqref{pp4} of the sequence $\{c_n\}$ implies that the endpoints of $b_i$ lie on $P_L\to Y$. Therefore either $b_i$ is a cut $0$-cell of $D$, or some (chosen) endpoint of $b_i$ is either internal or  does not map into $\ell$. 
Since $\bar c_i$ is disjoint from $P_Y\to D$,  Lemma~\ref{lem:basic-good-path}\eqref{lem:basic(1)} provides a good path $P_i\to \partial D$ from (our chosen points in) $b_{i-1}$ to $b_{i}$.  
Note that the hypotheses are satisfied. 
Indeed,  if $b_i$ is a $0$-cell then it is a cut $0$-cell of $D$ and therefore is not an internal $0$-cell of a boundary arc of $D$. 
If $b_i$ is a $1$-cell, then either the chosen endpoint is internal, or the chosen endpoint is on $\partial D$ and does not map into $\ell$ and is not an internal $0$-cell of a boundary arc of $D$. 

If $c_n$ is a $2$-cell, Lemma~\ref{lem:basic-good-path}\eqref{lem:basic(2)} implies that there is a good path $P_i\to \partial D$ from the chosen endpoint of $b_{n-1}$ to a $0$-cell of $P_Y\to D$.

Finally the good path $P\to D$ is the concatenation  $P_1\cdots P_n$ which is a good path by Lemma~\ref{lem:concatenation-goodpaths}.
\end{proof}

\subsection{Subdivisions, good paths, and internal paths} \label{subsec:subdivision-goodpath}

\begin{lem} [Subdividing along $\ell$]\label{lem:good-subdivision}
There is an $H$-equivariant subdivision $X'$ of $X$  satisfying:
\begin{enumerate}
\item $\ell$ is contained in the $1$-skeleton of $X'$,
\item each cell of $X'$ is convex,
\item each $0$-cell of $X'$ with nonzero curvature is a $0$-cell of $X$, and
\item if $X$ has nonpositive sectional curvature, then so does $X'$.
\end{enumerate}
\end{lem}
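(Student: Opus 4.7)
The plan is to construct $X'$ by cutting $X$ equivariantly along the union $\bigcup_{h \in H} h\ell$. For each cell $R$ of $X$, the set $\{h\ell \cap R : h \in H,\ h\ell \cap R \neq \emptyset\}$ is finite because $\ell$ and $R$ are compact and the $G$-action is proper, and each nonempty intersection is a straight chord of the Euclidean convex polygon $R$ (since $R$ is convex and $CAT(0)$-geodesics within $R$ are Euclidean). I would then add the chord crossings as new $0$-cells, the chord pieces between consecutive crossings as new $1$-cells, and the resulting convex sub-polygons as new $2$-cells. The construction is manifestly $H$-equivariant and produces a combinatorial subdivision $X'$ of $X$ satisfying (1) and (2) by construction.

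For (3) and (4) I would analyze new and old $0$-cells separately. A new $0$-cell $v$ either lies in the interior of a $1$-cell of $X$ --- in which case $\link(v, X')$ is a bigon with two straight corners of angle $\pi$ --- or in the interior of a $2$-cell of $X$ where $k \geq 1$ chords cross, in which case $\link(v, X')$ is a $2k$-cycle whose corner angles tile a Euclidean disk and sum to $2\pi$. In both cases $\chi(\link(v, X')) = 0$, giving $\standardcurvature(\link(v, X')) = 2\pi - 0 - 2\pi = 0$. Since every proper connected subgraph of a cycle has a spur, the only regular $H$-section of $\link(v, X')$ is the full link itself, so by the identity $|H| \cdot \standardcurvature(H, \Delta) = \standardcurvature(\identity, \Delta)$ used in Proposition~\ref{prop:standard-generalized}, the generalized sectional curvature at $v$ vanishes regardless of the stabilizer. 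For an old $0$-cell $x$, the link $\link(x, X')$ is obtained from $\link(x, X)$ by inserting new vertices along corner-edges at the places where chords from $\ell$-translates emanate from $x$, each such insertion splitting a corner into two corners whose angles sum to the original angle. This induces a bijection between regular ($H$-)sections of $\link(x, X')$ and of $\link(x, X)$ that preserves both Euler characteristic and total corner angle, so the curvature of each regular section is preserved. Each new $2$-cell is a convex Euclidean polygon with $n$ sides whose interior angles sum to $\pi(n-2)$, so $\curvature(f) = 0$.

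The main obstacle I foresee is the bookkeeping at old $0$-cells through which a translate of $\ell$ passes: one must carefully verify that the subdivision of $\link(x, X)$ described above does not create a regular section whose curvature exceeds that of any regular section of $\link(x, X)$. The key observation is that subdividing an edge never changes the Euler characteristic or total angle of a spurless connected subgraph of the link, which yields a curvature-preserving bijection on regular sections; combined with the vanishing curvature at new $0$-cells and new $2$-cells, this establishes both (3) and (4).
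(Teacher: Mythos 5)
Your construction is the same as the paper's: adjoin $H\ell$ to the $1$-skeleton, add the resulting intersection points as $0$-cells, and observe that convexity of cells makes each piece a convex polygon. The analysis of old $0$-cells via the curvature-preserving bijection on regular sections under subdivision is also essentially what the paper asserts (more tersely) when it notes that $\link(x,X')$ is a subdivision of $\link(x,X)$ for $x \in X^0$.

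One local inaccuracy: when a new $0$-cell $v$ lies in the interior of a $1$-cell $e$ of $X$, you claim $\link(v,X')$ is a bigon. This is only true when $e$ has exactly two incident $2$-cells. In general a $1$-cell of a $2$-complex may have $m \geq 2$ incident $2$-cells, and $\link(v,X')$ is then a subdivision of a $\Theta$-graph with two vertices and $m$ edges, each $\Theta$-edge carrying total angle $\pi$. In particular your claim that the only regular section of $\link(v,X')$ is the full link is false for $m \geq 3$: any union of $j \geq 2$ of the subdivided $\Theta$-edges is a regular section. Fortunately each such section has Euler characteristic $2 - j$ and total angle $j\pi$, hence curvature $2\pi - \pi(2-j) - j\pi = 0$, so your conclusions for both (3) and (4) survive — but the case analysis as written would not compile for the reader unless $X$ happened to be a surface. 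You should replace the bigon with the subdivided $\Theta$-graph and verify the zero-curvature computation for all of its regular sections, which is what the paper (implicitly) does.
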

\begin{proof}
Since $\ell$ intersects finitely many cells and $H$ acts properly on $X$, we see that each cell intersects finitely many $H$-translates of $\ell$.
The one-skeleton of $X'$ equals $X^{1} \cup H \ell$. In particular, the zero-skeleton of $X'$ consists of three types of cells: $0$-cells of $X$, intersections of $H$-translates of $\ell$ with open $1$-cells of $X$, and self-intersections of distinct $H$-translates $\ell$ within open $2$-cells of $X$. Since each cell of $X$ is convex and $\ell$ is a geodesic segment, it follows that each cell of $X'$ is convex.

We now verify the third statement. Observe that each new $0$-cell in $X'$ is in the interior of either a $2$-cell or a $1$-cell of $X$. In the former case, the link is a circle with $2\pi$-angle sum.  In the latter case, the link is a finite subdivision of a bipartite graph $\Theta$, with two vertices and  each edge of $\Theta$ has angle $\pi$.

Finally, $\link (x, X')$ is a subdivision of  $\link (x, X)$ when $x \in X^0$. Thus the last statement follows.
\end{proof}

The subdivision $X'$ of $X$ induces a subdivision of any complex that is immersed in $X$. Induced subdivisions of $P_L$, $P_Y$, $D$, $Y$ and $L$ are denoted by of $P_L'$, $P_Y'$, $D'$, $Y'$ and $L'$ respectively. The geodesic $\ell$ is an edge path in $L'$. Divide the path $P_L' \to L'$ into  paths $P_1', \ldots , P_k'$ such that each $P_i'$ is  either a subpath of $\ell$ or else intersects $\ell$ only at its endpoints.
Let $\ell_i$  be the subpath of $\ell$ between the endpoints of $P_i'$. (Note that the concatenation $\ell_1 \ldots \ell_k$ may not equal $\ell$, although it is a path within $\ell$.)
Let $K_i' \subseteq L'$ be the subdiagram between $\ell_i$ and $P_i'$.  Let $K'$ be the complex obtained by attaching to $P_L'$ a copy of $K_i'$ along the subpath $P_i'$ for each $i$. Observe that $K' \to L'$ is a near-immersion, and  $P_L' \to K'$ and $\ell_1\ldots \ell_k \to K'$ are embeddings. Since $K'$ is contractible, $K'\to L'$ is a disk-diagram between $P_L'\to L'$ and $\ell_1 \ldots \ell_k \to L'$.   See Figure~\ref{fig:K-ladder}.

Let $E'$ be the complex obtained by identifying $D'$ and $K'$ along the images of $P_L' \to D'$ and $P_L' \to K'$. Since $D'$ and $K'$ are disk-diagrams and $P_L' \to K'$ is an embedding, it follows that $E'$ is a disk-diagram. The minimality assumption of $D$ implies that $E'\to X'$ is a near-immersion; see Lemma~\ref{lem:E-near-immersion} below.  Therefore $E'\to X'$ is a disk-diagram between $P_Y'\to X'$ and $\ell_1\ldots \ell_k \to X'$. See Figure~\ref{fig:E-Diagram}.

\begin{figure}\centering
\includegraphics[width=.5\textwidth]{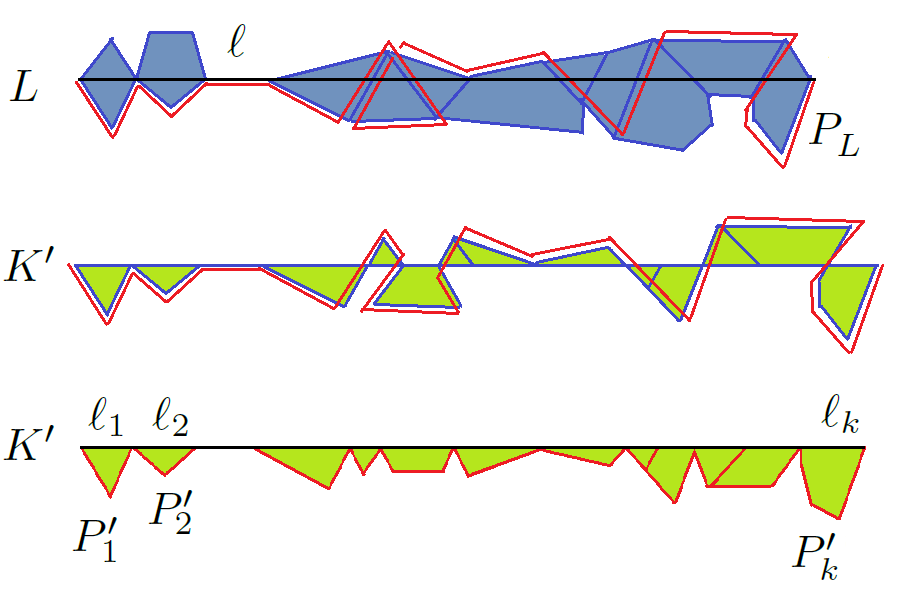}
\caption{The paths $\ell \to L$ and $P_L \to L$ are at the top. The embedding $P_L' \to K'$ is at the bottom.} \label{fig:K-ladder}
\end{figure}

\begin{lem}\label{lem:E-near-immersion}
The map $E' \to X'$ is a near-immersion.
\end{lem}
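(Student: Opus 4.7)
Suppose for contradiction that $E' \to X'$ is not a near-immersion. Then there is a $0$-cell $u$ of $E'$ where the induced link map $\link(u, E') \to \link(\bar u, X')$ is not locally injective, producing two distinct edges sharing a vertex that map to the same edge in the target. Equivalently, $E'$ carries a cancellable pair: two distinct $2$-cells $R_1 \neq R_2$ meeting $u$, sharing a $1$-cell $e$ at $u$, both mapping to the same $2$-cell $R$ of $X'$ with matching corners along $e$.

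I consider two cases. First, suppose $R_1, R_2$ both lie in $D'$ or both lie in $K'$. I reduce to showing each of these subcomplexes is itself a near-immersion to $X'$. For $D'$: the minimality of $\area(D)$ rules out cancellable pairs in $D$, since any such pair could be cancelled to yield a disk diagram of strictly smaller area with unchanged boundary $P_L P_Y^{-1}$; so $D \to X$ is a near-immersion and the induced subdivision $D' \to X'$ inherits this. For $K'$: the map $K' \to L'$ is a near-immersion by construction (the subdiagrams $K_i'$ are attached to $P_L'$ along disjoint subpaths $P_i'$), and $L' \to X'$ is a near-immersion because distinct open cells of $L$ map to distinct open cells of $X$ by the carrier construction; hence the composition $K' \to X'$ is a near-immersion. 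Second, suppose $R_1 \in D'$ and $R_2 \in K'$; the shared $1$-cell $e$ then lies on $P_L'$. I construct a strictly smaller disk diagram as follows: remove the open $2$-cell $R_1$ and the open $1$-cell $e$ from $D$, and replace the subpath $e$ of $P_L$ by a lift of $\partial R - e$ to $L$ through the copy of $R$ provided by $R_2 \subset K \subset L$. Since $\partial R \subset L$ and both cells map to the same $R$, the new path $\dot P_L \to L$ is well-defined and has the same endpoints as $P_L$. The resulting pair $(\dot P_L, \dot D)$ satisfies $\area(\dot D) = \area(D) - 1$, contradicting the lexicographic minimality of $(\area(D), |\partial D|)$.

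The main obstacle is the second case. If $\partial R_1$ meets $\partial D$ in more than just $e$, then removing $R_1$ may disconnect the diagram or force a simultaneous modification of $P_Y$, and one must verify that the resulting boundary paths still lift appropriately to $L$ and $Y$. Each subcase should still yield a diagram of strictly smaller area, but the combinatorial bookkeeping requires careful attention to how $\partial R_1$ is distributed between $P_L$, $P_Y$, and the interior of $D$.
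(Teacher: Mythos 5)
The approach is right in spirit (reduce to minimality of $D$) and the case $R_1 \in D'$, $R_2 \in K'$ is handled essentially as in the paper. But your handling of the case where both $R_1', R_2'$ lie in $K'$ has a genuine gap. You reduce that case to the claim that $K' \to X'$ is a near-immersion and stop there. While that claim is correct, it does not suffice. The point is that $D' \to E'$ is an embedding but $P_L' \to D'$ need not be: a boundary cycle of a disk diagram can traverse the same $1$-cell of $D$ twice (think of a spur or a portion of $\partial D$ with interior degree zero). If $e$ and $f$ are two $1$-cells of $P_L$ mapping to the same $1$-cell $c$ of $D$, then the corresponding $1$-cells $e'$ and $f'$ of $K'$ (which are distinct, since $P_L' \to K'$ is an embedding) become identified to a single $1$-cell $c'$ of $E'$ under the pushout. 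Consequently, two $2$-cells $R_1', R_2'$ of $K'$ attached along $e'$ and $f'$ respectively, which are not adjacent in $K'$, become adjacent in $E'$. The near-immersion property of $K' \to X'$ only constrains $2$-cells that already share a $1$-cell in $K'$; it says nothing about $R_1'$ and $R_2'$ mapping to distinct cells of $X'$. So this subcase must be argued directly. The paper does this by writing $P_L = S_1 e S_2 f S_3$ where $e$ and $f$ traverse $c$, noting that $\dot P_L = S_1 S_3$ is still a path in $L$ because $e$ and $f$ have the same image there, and deleting the subdiagram of $D$ bounded by $e S_2 f$ to produce a diagram of strictly smaller complexity, contradicting minimality.

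A secondary remark: your reduction for the case where both $R_1', R_2'$ lie in $D'$ is fine, precisely because $D' \to E'$ is an embedding, so adjacency in $E'$ reflects adjacency in $D'$. Also, the concluding caveat you raise (about $\partial R_1$ meeting $\partial D$ in more than $e$) is a real but minor bookkeeping issue that the paper handles by a careful choice of $\dot D$; the missing spur case above is the essential omission.
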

\begin{proof}
As $D\to X$ and $K'\to X'$ are near-immersions, it suffices to examine the $1$-cells of $E'$ along $P_L'\to E'$. Let $e'$ be a $1$-cell of $P_L'$. Suppose $R_1'$ and $R_2'$ are distinct  $2$-cells of $E'$ at (the image of) $e'$ that map to the same $2$-cell in $X'$. Since both $P_L' \to K'$  and $D'\to E'$ are embeddings, without loss of generality, assume $R_1'$ is in $K'$ and $R_2'$ is in $D'$.

We will show that the minimality of $D$ is violated.
Let $e$ be the $1$-cell of $P_L$ containing $e'$, let $R_1$ be the $2$-cell of $L$ containing the image of $R_1'$, and let $R_2$ be the $2$-cell of $D$ containing the image of $R_2'$. Let $\partial R_1 = Qe^{-1}$ and let $\dot{P}_L$ be formed from $P_L$ replacing $e$ with $Q$. Similarly, let $\dot{D}$ be the subdiagram of $D$ obtained by removing the open cells $e$ and $R_2$. Observe that  $\dot{D} \to X$ is a disk-diagram between  $\dot{P}_L \to X$ and $P_Y \to X$ and violates the minimality of $D$.

We now consider the case that both $R_1'$ and $R_2'$ are in $K'$. Observe that $R_1', R_2'$ meet each other along the $1$-cell $c'$ of $D'$ (the image of $e'$). Again, let $e$ be the $1$-cell of $P_L$ containing $e'$, and let $c$ be the $1$-cell of $D$ containing $c'$.  Then $P_L$ is a concatenation $S_1eS_2fS_3$ where $e$ and $f$ travel through the $1$-cell $c$. Let $\dot{P}_L$ equal $S_1S_3$ and note that $P_Y^{-1}\dot P_L \to D$ bounds a subdiagram $\dot D$ of $D$ - the part of $D$ that remains after removing the subdiagram bounded by $eS_2f$. Observe that $\dot P_L$ is a path in $L$ since $e$ and $f$ map to the same $1$-cell of $L$. As before $\dot{D}$ violates the minimality of $D$.
\end{proof}

\begin{figure}\centering
\includegraphics[width=.6\textwidth]{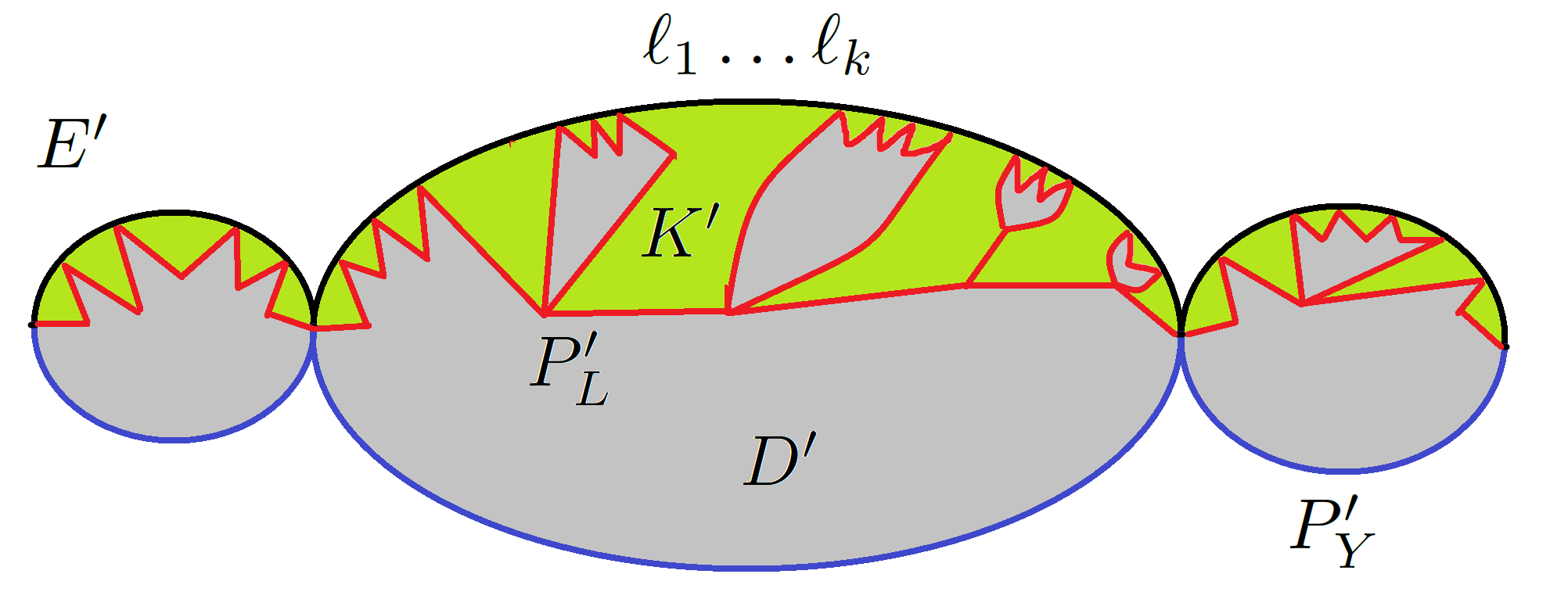}
\caption{The disk-diagram $E' \to X'$ between the paths $P_Y'\to X'$ and $\ell_1 \ldots \ell_k \to X'$.} \label{fig:E-Diagram}
\end{figure}

\subsection{The immersion $Z'\to X'$}

Let  $W' = Y' \sqcup_{H P_Y} H E'$ denote the union of $Y'$ and copies of $E'$ attached along the distinct $H$-translates of $P_Y$.  Since $E'$ is a finite complex, $W'$ is a cocompact $H$-complex. Since $Y'$ and $E'$ are simply connected, $W'$ is simply connected. In view of Lemma~\ref{lem:folding}, the $H$-map $W' \to X'$ factors as the composition of a surjection $W' \to Z'$ and an immersion $Z' \to X'$ where $Z'$ is a simply connected cocompact $H$-complex.

\begin{lem}[$\ell^2$-Euler Characteristic]
$\chi (H, Z')= \chi (H, Y)$.
\end{lem}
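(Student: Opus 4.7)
The plan is to establish the chain of equalities $\chi(H, Y) = \chi(H, Y') = \chi(H, W') = \chi(H, Z')$, each handled separately. The first equality follows from subdivision invariance of the orbifold Euler characteristic: since the $G$-action is without inversions, subdividing an $H$-orbit of $1$-cells (or $2$-cells) produces balanced contributions to $\chi$, with matching stabilizer weights on the new cells (a subdivided $1$-cell of stabilizer $K$ yields one new $0$-cell of stabilizer $K$ and two $1$-cells of stabilizer $K$, netting zero change).

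For $\chi(H, Y') = \chi(H, W')$, I would apply inclusion--exclusion to the $H$-invariant decomposition $W' = Y' \cup HE'$ with $Y' \cap HE' = HP_Y$, yielding
\[\chi(H, W') = \chi(H, Y') + \chi(H, HE') - \chi(H, HP_Y).\]
Letting $K \leq H$ denote the stabilizer in $H$ of the chosen representative $P_Y$, arranged so that $K$ also stabilizes the attached copy of $E'$, one has $\chi(H, HE') = \chi(K, E')$ and $\chi(H, HP_Y) = \chi(K, P_Y)$. Both $E'$ (a disk diagram) and $P_Y$ (a path) are contractible $K$-complexes, and all $K$-cell-stabilizers are finite since $G$ acts properly on $X$. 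Corollary~\ref{cor:betti-goal-2} applied to the $K$-equivariant inclusion $P_Y \hookrightarrow E'$ yields $\chi(K, E') = \chi(K, P_Y)$, giving $\chi(H, W') = \chi(H, Y')$.

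For $\chi(H, W') = \chi(H, Z')$, the folding map $W' \to Z'$ from Lemma~\ref{lem:folding} is an $H$-equivariant surjection and $\pi_1$-surjection between simply connected complexes. The plan is to equivariantly attach $H$-orbits of $3$-cells to $Z'$ along $2$-spheres generating $\pi_2(Z')$, producing a contractible $H$-complex $\hat Z$; lifting the attaching spheres through the folding and attaching matching $3$-cells to $W'$ yields a contractible $\hat W$. Applying Corollary~\ref{cor:betti-goal-2} to $\hat W$ and $\hat Z$ gives $\chi(H, \hat W) = \chi(H, \hat Z)$, and subtracting the matched $3$-cell contributions yields $\chi(H, W') = \chi(H, Z')$. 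This third step is the main obstacle: the folding identifies cells across distinct $H$-orbits and does not, by naive cell-counting, preserve $\chi$, so the key is to verify that the $2$-spheres generating $\pi_2(Z')$ can be equivariantly lifted to $W'$ via $H_2$-surjectivity of the folding, and that the lifted complex $\hat W$ is genuinely contractible.
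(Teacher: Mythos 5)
The proposal's first two steps are sound in outline, but the third step is a genuine gap, and it misses the observation that renders the whole argument short. The paper's proof does not go through $W'$ at all: since $Z' \to X'$ is an immersion into a $CAT(0)$ complex with convex cells, $Z'$ inherits a locally $CAT(0)$ structure, and being simply connected it is $CAT(0)$ and hence \emph{contractible}; the same holds for $Y'$. Corollary~\ref{cor:betti-goal-2} applied to the $H$-equivariant embedding $Y' \to Z'$ (an $H$-map between contractible proper $H$-complexes) then gives $\chi(H, Z') = \chi(H, Y')$ immediately, and $\chi(H, Y') = \chi(H, Y)$ by subdivision invariance.

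Your third step, attempting $\chi(H, W') = \chi(H, Z')$ by attaching $3$-cells along $2$-spheres generating $\pi_2(Z')$ and lifting them through the folding, does not hold up. A fold is not a covering map, so there is no reason $2$-spheres in $Z'$ lift to $W'$; even if they did, there is no mechanism ensuring the attached $3$-cells in $\hat W$ and $\hat Z$ match orbit-by-orbit with equal stabilizer weights, which is exactly what the ``subtracting matched contributions'' step would require. But the deeper problem is that the step is solving a non-existent problem: $Z'$ is already contractible (so $\pi_2(Z')=0$), and $W'$ is also contractible (it is an iterated gluing of the contractible pieces $Y'$ and $E'$ along the contractible arcs $hP_Y$), so Corollary~\ref{cor:betti-goal-2} applied directly to $W' \to Z'$ would close the step with no extra construction. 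The essential missing ingredient in your argument is the contractibility of $Z'$ coming from the $CAT(0)$ geometry. Your second step (inclusion--exclusion for $W' = Y' \cup HE'$ plus the induction formula $\chi(H,HE')=\chi(K,E')$) is defensible but is considerably more machinery than the paper uses, and it, too, becomes superfluous once the contractibility of $Y'$ and $Z'$ is in hand.
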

\begin{proof}
Since $Z'\to X'$ is a immersion, $Z'$ is a locally $CAT(0)$-space. Since $Z'$ is simply-connected, it is contractible. Analogously, $Y'$ is contractible.  Since $Y'$ and $Z'$ are both proper $H$-complexes and the embedding $Y' \to Z'$ is an $H$-map, Corollary~\ref{cor:betti-goal-2} implies that $\chi (H, Z')= \chi (H, Y')$. Moreover, $\chi (H, Y')=\chi (H, Y)$  since the definition of $\ell^2$-Betti numbers is independent of the cell structure of the space.
\end{proof}

\begin{lem}[$0$-cells with Positive Curvature]
$|\positivecells (H, Z')| \leq |\positivecells (H, Y)|$.
\end{lem}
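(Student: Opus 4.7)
The plan is to construct an $H$-equivariant injection of orbits $\positivecells(H, Z') \hookrightarrow \positivecells(H, Y)$ by tracking positive-curvature $0$-cells through the factorization $Y \subseteq Y' \hookrightarrow W' \twoheadrightarrow Z'$. First, Lemma~\ref{lem:good-subdivision} implies that new $0$-cells in $Y'$ (those introduced by the subdivision along $\ell$) have zero curvature, and the curvature of original $0$-cells of $Y$ is unchanged (angle splitting being additive in the $CAT(0)$ setting), so $\positivecells(H, Y) = \positivecells(H, Y')$, and it suffices to inject into $\positivecells(H, Y')$. Moreover, Lemma~\ref{lem:good-subdivision} together with Proposition~\ref{prop:standard-generalized} gives that $X'$ has nonpositive generalized sectional curvature; since $Z' \to X'$ is an immersion, $Z'$ inherits this property, and Lemma~\ref{lem:zero-cells} yields $\positivecells(H, Z') \subseteq \bnd(H, Z')$.

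The central structural claim is that for $y \in \positivecells(H, Z')$, the link $\link(y, Z')$ contains no embedded cycle. Suppose otherwise, and let $\Gamma_0$ denote the spurless core of $\link(y, Z')$ obtained by iteratively removing leaves (valence-one vertices together with their incident edges). Then $\Gamma_0$ is a regular $H_y$-section of $\link(x, X')$, where $x$ is the image of $y$ in $X'$, so $\standardcurvature(H_y, \Gamma_0) \leq 0$ by generalized sectional curvature. For each removed leaf $(v, e)$ one has $H_v = H_e$, and since $CAT(0)$ angles are non-negative, the removal changes the curvature by $+\mangle(e) \cdot |H_e|^{-1} \geq 0$. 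Iterating, $\curvature(y) = \standardcurvature(H_y, \link(y, Z')) \leq \standardcurvature(H_y, \Gamma_0) \leq 0$, contradicting $\curvature(y) > 0$. Hence $\link(y, Z')$ is a forest.

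Using this, I show that every such $y$ has a preimage in $Y' \subseteq W'$. Consider any preimage $\tilde y$ of $y$ in $W' = Y' \sqcup_{HP_Y} HE'$. If $\tilde y$ lies in the interior of a copy of $E'$, then since $E' = D' \cup_{P_L'} K'$ is a $2$-disk, $\link(\tilde y, E')$ is a single cycle; the near-immersion $E' \to X'$ (Lemma~\ref{lem:E-near-immersion}) together with the immersion $Z' \to X'$ produces an embedded cycle in $\link(y, Z')$, contradicting the forest property. Preimages lying on $H(\ell_1 \cdots \ell_k) \setminus HP_Y$ map to $0$-cells of $X'$ on $H\ell$: if such a preimage maps to a subdivision $0$-cell of $X'$, the cyclic or bipartite structure of its link (as described in Lemma~\ref{lem:good-subdivision}) together with the forest constraint forces $\curvature(y) \leq 0$; if it maps to an original $0$-cell of $X$ lying in $Y$, that $0$-cell is in $Y'$ and yields a preimage of $y$ in $Y'$ directly. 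Since $Y' \hookrightarrow X'$ is an inclusion and $Z' \to X'$ is an immersion, $Y' \to Z'$ is injective on $0$-cells, hence $H$-equivariantly injective on orbits; picking such a $Y'$-preimage $y_0$ defines the desired map. A final leaf-monotonicity computation applied to the factorization $\link(y_0, Y') \hookrightarrow \link(y_0, W') \twoheadrightarrow \link(y, Z')$, where the extra structure at $y_0 \in HP_Y$ contributed by folded-in copies of $E'$ can be analyzed as a sequence of tree-extensions, gives $\curvature_{Y'}(y_0) \geq \curvature_{Z'}(y) > 0$, so the image lies in $\positivecells(H, Y') = \positivecells(H, Y)$.

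The hardest step will be ruling out, or reducing to $Y'$, the preimages on $H(\ell_1 \cdots \ell_k) \setminus HP_Y$ whose images in $X'$ are original $0$-cells of $X$ not in $Y$: this case requires a careful combined analysis of the folding identifications with the link structure at subdivision and non-subdivision $0$-cells of $X'$, exploiting the interplay between the $CAT(0)$ geometry, the construction of $E'$ via $D' \cup_{P_L'} K'$, and the structural forest constraint on $\link(y, Z')$ established above.
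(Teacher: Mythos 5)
Your overall framing is right: you set up the same injection $\positivecells(H,Z')\hookrightarrow\positivecells(H,Y')$ as the paper, correctly observe via Lemma~\ref{lem:good-subdivision} that $|\positivecells(H,Y')|=|\positivecells(H,Y)|$, and correctly reduce to showing any $0$-cell of $Z'$ with $\link$ a forest and with positive curvature must come from $Y'$. The interior-of-$E'$ case is also handled correctly (a near-immersion of a disk would inject a cycle into $\link(y,Z')$). But the proposal has a genuine gap exactly where the argument gets hard, and you acknowledge as much in the final paragraph.

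The problem is the treatment of preimages in the interior of $\ell_1\cdots\ell_k$. Your claim that, when the image is a subdivision $0$-cell of $X'$, ``the cyclic or bipartite structure of its link\ldots together with the forest constraint forces $\curvature(y)\leq 0$'' does not work as stated: a tree which is a subgraph of a circle is an interval, and an interval in a circle with total angle sum $2\pi$ can have arbitrarily small angle sum, so its curvature can certainly be positive. Similarly, a subtree of the $\Theta$-graph with no edges at all has curvature $\pi>0$. What is actually needed is a \emph{lower bound of $\pi$ on the angle sum of $\link(y,Z')$}, and that bound does not come from the structure of $\link(x,X')$ alone. It comes from two facts that you never invoke: (i) $\ell$ is a $CAT(0)$ geodesic, so at each interior point of $\ell_1\cdots\ell_k$ the angle in $\link(x,X')$ between the two incoming edges of $\ell$ is at least $\pi$ on each side; and (ii) the carrier construction $K'$ is built precisely so that the $2$-cells of $L'$ lying between $\ell_i$ and $P_i'$ are glued into $E'$, hence their corners at $z$ survive into $\link(z,Z')$. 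Together these force $\link(z,Z')$ to contain a path of angle sum $\geq\pi$ when $\ell_1\cdots\ell_k$ is locally geodesic at $z$, and the paper then separately handles the backtrack case (where $z$ is a junction of $\ell_i$ and $\ell_{i+1}$) by showing $\link(z,K')$ already has angle sum $\pi$. This argument applies uniformly to every interior $0$-cell of $\ell_1\cdots\ell_k$, whether it is a subdivision $0$-cell or an original $0$-cell of $X$ not in $Y$; in particular it fills precisely the case you flag as unresolved, which is not a small residual case but the crux of the lemma. Without the geodesic angle bound and the role of $K'$, the ``forest'' observation by itself is not enough, and the final ``leaf-monotonicity'' sketch comparing $\link(y_0,Y')$ to $\link(y,Z')$ does not fix this, since it addresses a different (and in fact unnecessary) direction of the comparison.
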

\begin{proof}
It is enough to show that if $z$ is a $0$-cell of $Z'$ such that $\standardcurvature (\link (z), H_z)>0$  then $z$ is in the image of $Y'\to Z'$. Indeed, this statement implies that there is an injective map $\positivecells (H, Z') \to \positivecells (H, Y')$; and moreover $|\positivecells (H, Y')|=|\positivecells (H, Y)|$ since $0$-cells of $Y'$ arising as a result of the subdivision have zero curvature by Lemma~\ref{lem:good-subdivision}.

Let $z$ be a $0$-cell of $Z'$ and suppose that $z$ is not in the image of $Y'\to Z'$. We show that $\standardcurvature (\link (z), H_z)\leq 0$. Since $Z'$ has non-positive sectional curvature and it is positively angled, if $\link (z, Z')$ has a cycle or is disconnected then it is easy to see that $\standardcurvature (\link (z), H_z) \leq 0$.  Suppose that $\link (z, Z')$ is a tree.

Let $w$ be a preimage of $z$ by $W'\to Z'$. Since $z$ is not in the image of $Y'\to Z'$, it follows that $w$ is a $0$-cell in the image of $hE' \to W'$ for some $h\in H$. Without loss of generality, assume that $h=1$. Since $E'\to Z'\to X'$ is a near-immersion and $\link (z)$ is a tree, it follows that $\link (w)$ is a tree.  Since $E'$ is a disk-diagram, it follows that $w$ is in the image of $\partial E \to W'$.  Since the image of $w$ by $W'\to Z'$  is not contained in the image of $Y'\to Z'$, it follows that $w$ is not in the image of $P_Y'\to E'$. Therefore $w$ is a $0$-cell in the interior of $\ell_1 \ldots \ell_k \to W'$, and hence $z$ is in the interior of $\ell_1 \ldots \ell_k \to Z'$.

Suppose that $\ell_1 \ldots \ell_k \to Z'$ is locally a geodesic at $z$. By the construction of $K'$, if $e_1$ and $e_2$ are $1$-cells of $\ell_1 \ldots \ell_k$ with a common endpoint $z$ then $\link (z, Z')$ has a path between the vertices induced by $e_1$ and $e_2$ with angled-sum at least $\pi$, hence $\standardcurvature (\link (z), H_z) \leq 0$.

Suppose $\ell_1 \ldots \ell_k \to Z'$ is not locally an embedding around $z$ -there is a backtrack. By the construction of $K'$, $z$ is the  terminal point of $\ell_i$ for some $i<k$ and  the path $\ell_i \ell_{i+1} \to X'$ has a backtrack. It follows that $\link (z, K')$ consists of two non-edgeless components, and the angle sum of $\link (z, K')$ is $\pi$. Observe that the two components of $\link (z, K')$ are mapped into $\link (z, X')$ to a path with angle sum equal $\pi$; in particular, the angle sum of $\link (z, Z')$ is no less than $\pi$ and hence $\standardcurvature (\link (z), H_z)\leq 0$.
\end{proof}

\begin{lem}[$0$-cells with Negative Curvature]\label{lem:qcbound}
\begin{equation*}
\begin{split}
|\nega (H, Z')| &\leq   \frac{2\pi \cdot \chi (H, Y) - \positivebound (G, X) \cdot |\positivecells (H, Y)| }{ \negativebound (G, X)}. \qedhere
\end{split}
\end{equation*}
\end{lem}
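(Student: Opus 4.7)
The plan is to apply Lemma~\ref{lem:Bounds} to the $H$-equivariant immersion $Z' \to X'$ and then translate the resulting bound back to quantities expressed in terms of $X$ and $Y$ using the two preceding lemmas of this subsection. Since $X$ has negative sectional curvature, Lemma~\ref{lem:good-subdivision}\,(4) ensures that $X'$ has nonpositive sectional curvature, so every $2$-cell of $X'$, and hence every $2$-cell of the immersed complex $Z'$, has nonpositive curvature. Applying Lemma~\ref{lem:Bounds} directly yields
\[ |\nega(H, Z')| \;\leq\; \frac{2\pi \cdot \chi(H, Z') - \positivebound(G, X') \cdot |\positivecells(H, Z')|}{\negativebound(G, X')}. \]

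The next step is to show that $\negativebound(G, X') = \negativebound(G, X)$ and $\positivebound(G, X') = \positivebound(G, X)$, so that the bound is controlled by quantities associated to $X$ rather than to $X'$. Each new $0$-cell of $X'$ has a link that is either a cycle with angle sum $2\pi$ or a subdivided $\Theta$-graph whose edges have angle $\pi$, and a direct calculation using Definition~\ref{defn:weight} shows that every section of such a link has curvature $0$. At an old $0$-cell $x \in X^{(0)}$, the link $\link(x, X')$ is obtained from $\link(x, X)$ by introducing new vertices that subdivide the edges corresponding to corners bisected by $\ell$, together with any new $\ell$-vertices appearing at $x$. The core computation is that replacing an edge of angle $\theta$ by two edges of angles $\theta_1, \theta_2$ with $\theta_1+\theta_2=\theta$ together with the intermediate new vertex changes the curvature contribution by $(\pi-\theta_1) + (\pi-\theta_2) - \pi - (\pi - \theta) = 0$, while including only one half of a split corner in a section would force a spur at the new vertex. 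This yields a curvature-preserving correspondence between sections of $\link(x, X')$ and of $\link(x, X)$, hence the claimed equalities.

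Finally, combining with $\chi(H, Z') = \chi(H, Y)$ and $|\positivecells(H, Z')| \leq |\positivecells(H, Y)|$ from the preceding two lemmas, and noting that $\positivebound(G, X) \geq 0$ so that replacing $|\positivecells(H, Z')|$ by the larger $|\positivecells(H, Y)|$ can only decrease the numerator, division by $\negativebound(G, X) < 0$ flips the inequality and produces the desired bound. The principal obstacle is the identification $\negativebound(G, X') = \negativebound(G, X)$ and $\positivebound(G, X') = \positivebound(G, X)$ in the second step, where one must carefully track the new $\ell$-vertices appearing in $\link(x, X')$ when $\ell$ passes through an old $0$-cell $x$, and verify via the spur-avoidance observation that no anomalous section of $\link(x, X')$ produces a curvature outside the range realized in $X$.
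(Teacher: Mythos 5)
Your proof is correct and follows essentially the same route as the paper: apply Lemma~\ref{lem:Bounds} to the immersion $Z'\to X'$, observe that the bound constants $\negativebound$ and $\positivebound$ are unchanged by the subdivision, and then substitute $\chi(H,Z')=\chi(H,Y)$ and $|\positivecells(H,Z')|\leq|\positivecells(H,Y)|$ from the two preceding lemmas. Where the paper merely asserts ``since angles are positive, the constants of Definition~\ref{def:Bounds} satisfy $\negativebound(G,X')=\negativebound(G,X)$ and $\positivebound(G,X')=\positivebound(G,X)$,'' you supply the underlying justification — the curvature-zero links of new $0$-cells and the curvature-preserving correspondence between regular sections of $\link(x,X')$ and $\link(x,X)$ forced by spur-avoidance at subdivision vertices — which is a welcome unpacking of the terse step; the final sign bookkeeping ($\positivebound \geq 0$, $\negativebound<0$) is also handled correctly.
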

\begin{proof}
Since angles are positive, observe that the constants of Definition~\ref{def:Bounds} satisty  $\negativebound(G, X') = \negativebound(G, X)$ and $\positivebound(G, X) = \positivebound(G, X')$. By Lemma~\ref{lem:Bounds},
\begin{equation*}
\begin{split}
|\nega (H, Z')| &\leq   \frac{2\pi \cdot \chi (H, Z') - \positivebound (G, X) \cdot |\positivecells (H, Z')| }{ \negativebound (G, X)}.
\end{split}
\end{equation*}
The conclusion follows from the previous two lemmas and the above inequality.
\end{proof}

\subsection{Conclusion of the proof of the Quasiconvex core theorem}

\begin{lem}[From good to internal]\label{lem:existence-goodpaths}
Suppose that $Q\to D$ is a good path whose interior does not intersect $P_Y\to D$. Then all $0$-cells of  $Q\to D$ are mapped to internal $0$-cells of $Z'$ by $Q'\to Z'$.
\end{lem}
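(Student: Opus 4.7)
The plan is to prove, for each 0-cell $c$ of $Q'$ with image $z$ in $Z'$, that $\link(z, Z')$ contains an embedded cycle. I will accomplish this in two steps: first, show $\link(c, E')$ already contains an embedded cycle; second, transfer this cycle to $\link(z, Z')$ using the near-immersion $E' \to X'$ (Lemma~\ref{lem:E-near-immersion}) and the immersion $Z' \to X'$ (Lemma~\ref{lem:folding}).

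For the transfer step, consider the factorization $\link(c, E') \to \link(z, Z') \hookrightarrow \link(\bar z, X')$. The second arrow is an embedding (since $Z' \to X'$ is an immersion), while the composite $\link(c, E') \to \link(\bar z, X')$ is an immersion of graphs (since $E' \to X'$ is a near-immersion). An immersion that factors through an embedding forces the first factor to be an immersion itself, hence $\pi_1$-injective. Thus an embedded cycle in $\link(c, E')$ yields nontrivial $\pi_1$ in $\link(z, Z')$ and thereby an embedded cycle there, as desired.

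For the first step, I distinguish two cases based on the position of $c$ in $D'$. If $c$ lies in the interior of $D$, or if $c$ is a new subdivision 0-cell lying in the interior of a 2-cell or a non-boundary 1-cell of $D$, then $\link(c, D')$ is already a cycle (as the link of an interior 0-cell of a planar disk diagram, preserved under subdivision). Since the attaching of $K'$ to $D'$ along $P_L'$ does not affect 0-cells off $P_L'$, we get $\link(c, E') = \link(c, D')$, a cycle. Otherwise $c$ lies on $\partial D$; the hypothesis that the interior of $Q$ misses $P_Y$ together with the fact that $P_L$ and $P_Y$ meet only at the endpoints $s,t$ of $\ell$ places $c$ on $P_L'$. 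The good-path condition then forces the image of $c$ in $X$ not to meet $\ell$; in particular $c \neq s, t$, so $c$ is an interior 0-cell of the path $P_L'$. In the construction $E' = D' \cup_{P_L'} K'$, the link $\link(c, E')$ is therefore the union $\link(c, D') \cup \link(c, K')$ glued along the two vertices $a, b$ corresponding to the two 1-cells of $P_L'$ incident to $c$. Each of $\link(c, D')$ and $\link(c, K')$ is an arc from $a$ to $b$, since $c$ lies on the boundary of both disk diagrams, and the two arcs thus combine into a circle.

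The main obstacle will be the boundary case, where both hypotheses of the lemma must be combined precisely: the disjointness of the interior of $Q$ from $P_Y$ keeps $c$ away from the endpoints $s, t$ of $\ell$, while the good-path condition keeps $c$ off $\ell$ itself; together these guarantee that $c$ becomes genuinely interior to $E'$ after the gluing with $K'$, so that its link becomes a cycle rather than an arc. Endpoints of $Q$ that happen to lie on $P_Y$ can be treated by the same type of analysis, replacing the gluing with $K'$ by the gluing with $Y'$ (and possibly an attached copy of $E'$) in $W'$, which again provides a cycle at the link of the image in $Z'$.
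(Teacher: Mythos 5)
Your argument follows the same two-step strategy as the paper: first locate the relevant $0$-cells in the interior of the diagram $E'$, then transfer internality along $E' \to Z'$ using that $E'\to X'$ is a near-immersion and $Z'\to X'$ is an immersion (the paper invokes its earlier lemma that near-immersions send internal $0$-cells to internal $0$-cells; your factorization $\link(c,E')\to\link(z,Z')\hookrightarrow\link(\bar z,X')$ re-derives exactly that). One small slip: the assertion that $P_L$ and $P_Y$ meet only at the endpoints $s,t$ of $\ell$ is not true in general --- by Lemma~\ref{lem:cut-cells-2}, cut $0$-cells of $D$ lying on $P_Y$ also lie on $P_L$ --- but you don't actually need it, since for $c$ interior to $Q$ the disjointness hypothesis alone places $c$ on $P_L'$. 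The closing sentence about endpoints of $Q$ lying on $P_Y$ is left as a sketch; note that the paper's own terse proof treats such a $0$-cell as if "by assumption, $u$ is not in $P_Y$," which in fact applies only to interior $0$-cells, and the downstream bound $|Q|\leq 1+|\nega(H,Z')|$ in Lemma~\ref{lem:PLPY} only uses internality of the interior $0$-cells of $Q$ anyway.
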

\begin{proof}
Since $D'\to E'$ is an embedding each $0$-cell of the interior of $D'$ is mapped to the interior of $E'$.  Suppose $u$ is a $0$-cell of $Q\to D$ which is not in the interior of $D$.  Since $Q\to D$ is   good path, the image of $u$ in $X$ does not intersect $\ell$; and by assumption, $u$ is not in $P_Y\to D$. Therefore $u$ is  mapped into the interior of $E'$.  Since $E' \to Z'$ is a near-immersion the conclusion follows.
\end{proof}

Since $X$ is $G$-cocompact, there is an upper bound $C=C(X)$ on the length of boundary paths of $2$-cells of $X$.

\begin{lem}[$P_L$ is uniformly close to $P_Y$]\label{lem:PLPY}
Let $u$ be a $0$-cell of $P_L$. Then the combinatorial distance from $u$ to the subcomplex $Y$ is bounded by the constant
\[ 1 + C(X)+\frac{2\pi \cdot \chi (H, Y) - \positivebound (G, X) \cdot |\positivecells (H, Y)| }{ \negativebound (G, X)}.\]
\end{lem}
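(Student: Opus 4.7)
The plan is to bound $d_X(u, Y)$ by combining a (possibly empty) short detour that brings $u$ into the hypothesis of Proposition~\ref{prop:qcmain1} with a good path to $P_Y$ whose length is controlled via a count of $\nega(H, Z')$.

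First, I would address the case where $u$ is an internal $0$-cell of a boundary arc $a$ of $D$, which is the case excluded from Proposition~\ref{prop:qcmain1}. Moving $u$ along $a$ to its nearer endpoint $u^*$ yields a $0$-cell of $\partial D$ satisfying the hypothesis of the proposition. Using the minimality of $(\area(D), |\partial D|)$, I expect to show that any such boundary arc lies along the boundary of a single $2$-cell of $D$, since a transition $0$-cell between distinct $2$-cells of $D$ meeting only at that vertex would enable a complexity-reducing surgery on $D$ analogous to those in the proofs of Lemmas~\ref{lem:reduction-1} and~\ref{lem:cut-cells-2}. Hence $d_X(u, u^*) \leq |a| \leq C(X)$. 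If $u$ is not internal to a boundary arc, set $u^* = u$ so this step contributes $0$.

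Next, apply Proposition~\ref{prop:qcmain1} at $u^*$ to produce a good path from $u^*$ to a $0$-cell of $P_Y$, and invoke Lemma~\ref{lem:existence-goodpaths} so that its lift to $Z'$ has every $0$-cell internal. Let $Q \to X$ be a path of minimal length from $u^*$ to $Y$ admitting an internal lift $Q' \to Z'$. Minimality forces the interior $0$-cells of $Q$ to lie in distinct $H$-orbits: otherwise the $H$-translate of the portion of $Q$ between two orbit-equivalent $0$-cells would glue onto the initial segment to give a strictly shorter path from $u^*$ into an $H$-translate of $P_Y \subset Y$, still admitting an internal lift. For the curvature analysis, each interior $0$-cell $w$ of $Q$ is a $0$-cell of $X$ (not a subdivision $0$-cell of $X'$). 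Since $Z' \to X'$ is an immersion and $X'$ has nonpositive sectional curvature by Lemma~\ref{lem:good-subdivision}, the image of $w$ in $Z'$ is internal with curvature $\leq 0$; zero curvature would force the image in $X'$ to be a subdivision $0$-cell, contradicting $w \in X^{(0)}$. Hence the images of the interior $0$-cells of $Q$ lie in $\nega(H, Z')$, giving $|Q| - 1 \leq |\nega(H, Z')|$. Combining this with Lemma~\ref{lem:qcbound} and adding the detour cost yields
\[
d_X(u, Y) \leq C(X) + |Q| \leq 1 + C(X) + \frac{2\pi \cdot \chi(H, Y) - \positivebound(G, X) \cdot |\positivecells(H, Y)|}{\negativebound(G, X)}.
\]

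The principal obstacle I anticipate is the boundary-arc bound in the first step: verifying that minimality of $D$ precludes boundary arcs of length exceeding $C(X)$ requires a careful surgery argument showing that a transition $0$-cell of valence two in $D$ between two $2$-cells would be resolvable to decrease the complexity. The remaining steps assemble Proposition~\ref{prop:qcmain1}, Lemma~\ref{lem:existence-goodpaths}, Lemma~\ref{lem:good-subdivision}, and Lemma~\ref{lem:qcbound} in the manner sketched in the introduction.
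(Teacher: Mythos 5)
Your proposal follows the same overall strategy as the paper: a short detour of length at most $C(X)$ to a boundary $0$-cell $v$ not internal to a boundary arc, then Proposition~\ref{prop:qcmain1} plus Lemma~\ref{lem:existence-goodpaths} to produce a minimal path lifting internally to $Z'$, then a curvature argument to place each interior $0$-cell in $\nega(H, Z')$, and finally Lemma~\ref{lem:qcbound}. The curvature step and the $H$-orbit/minimality step are essentially identical to the paper's.

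The one place you diverge from the paper, and where your sketch has a real gap, is the detour step. You worry about "a transition $0$-cell between distinct $2$-cells of $D$ meeting only at that vertex," but such a $0$-cell would have valence at least $4$, so it cannot be an internal $0$-cell of a boundary arc; this case simply does not arise. The case you actually need to exclude is a boundary arc whose $1$-cells are isolated (not on any $2$-cell) and could therefore be arbitrarily long. The paper handles this by first showing, via Lemma~\ref{lem:cut-cells-2}, that $u$ is not a cut $0$-cell of $D$. Since any internal $0$-cell of a boundary arc having an isolated $1$-cell would be a cut $0$-cell of the contractible complex $D$, it follows that the $1$-cells of the arc through $u$ all lie on $2$-cells, and then the valence-$2$ condition forces the whole arc to lie on the boundary of a single $2$-cell, giving the $C(X)$ bound. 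Your "complexity-reducing surgery" sketch does not address the isolated $1$-cell case and so leaves the bound $|a| \leq C(X)$ unjustified. Replacing that paragraph with the cut-$0$-cell observation from Lemma~\ref{lem:cut-cells-2} closes the gap and brings your argument in line with the paper's.
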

\begin{proof}
If $u$ is in the $C(X)$-neighborhood of $Y$ in $X$ then the statement is clear.  Otherwise, $u$ is not in $P_Y\to D$ and hence Lemma~\ref{lem:cut-cells-2} implies that $u$ is not a cut $0$-cell of $D$. It follows that  $u$ is in the closure of a boundary arc of $D$ and hence there is $v$ in $P_L$ which is not an internal cell of a boundary arc of $D$ and the distance between $u$ and $v$ is bounded by $C(X)$.

By Proposition~\ref{prop:qcmain1}, there is a good path $Q\to D$ from $v$ to a $0$-cell of $P_Y\to X$. 
Assume that $Q\to X$ has minimal combinatorial length.  Then the interior of $Q\to X$ does not intersect $P_Y\to D$.  By Lemma~\ref{lem:existence-goodpaths},  $Q\to X$ factors as $Q'\to Z'\to X'$ and each $0$-cell $z$ of $Q\to D$ is mapped to an internal cell of $Z'$.  

Since $Z'\to X'$ is an immersion, $\link(z, Z')$ has a cycle for each $0$-cell $z$ of $Q\to D$. 
Since $X$ has sectional curvature $\leq \alpha<0$, it follows that  $\standardcurvature (H_z, \link(z, Z'))<0$. By minimality, $Q\to X$ is embedded and, moreover, no pair of distinct $0$-cells of $Q'\to Z'$ are in the same $H$-orbit. Therefore $|Q|\leq 1+|\nega (H, Z')|$.

An upper bound for the combinatorial distance between $v$ and $P_Y\to X$  follows from the previous inequality and Lemma~\ref{lem:qcbound}. The proof concludes by adding the upper bound $C(X)$ on the distance between $u$ and $v$. 
\end{proof}

By Lemmas~\ref{lem:PLell}~and~\ref{lem:PLPY}, there is a uniform upper bound for the distance between $\ell$  and $Y$ independent of $\ell$. This concludes the proof of Theorem~\ref{thm:qccore}

\section{Large Quotients}

\begin{thm}\label{thm:quotients}
Let $X$ be a $CAT(0)$ cocompact and proper $G$-complex with sectional curvature $\leq \alpha<0$. Let $g \in G$ be an infinite order element. Let $\gamma$ be an axis for $g$ and suppose $\gamma \cap f \gamma$ is discrete for any $f \in G - \langle g \rangle$.  

There exists $N>0$ such that for any  $n\geq N$,  the group $\bar G = G / \nclose{g^n}$ has a CAT(0) cocompact $\bar G$-complex with sectional curvature $\leq \bar \alpha <0$.
\end{thm}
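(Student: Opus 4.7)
The plan is to construct a $\bar G$-complex $\bar X$ by an equivariant combinatorial Dehn filling of $X$. Let $k$ denote the combinatorial length of a $\langle g\rangle$-fundamental domain on $\gamma$. For each $\bar G$-orbit of axes I $\bar G$-equivariantly attach a triangulated ``pyramid disk'' of $nk$ triangles consisting of a single central $0$-cell $c$ joined to the $nk$ vertices along a copy of $\bar\gamma := \gamma/\langle g^n\rangle$. Equivalently, $\bar X$ is the orbihedron universal cover of $(X/G)$ with one $2$-cell attached along the loop representing $g^n$; its orbifold fundamental group is $\bar G$ by Seifert--van Kampen. The stabilizer of $c$ in $\bar G$ is the image $\langle \bar g\rangle \cong \mathbb{Z}/n\mathbb{Z}$ of $\langle g\rangle$. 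The hypothesis that $\gamma \cap f\gamma$ is discrete for $f \notin \langle g\rangle$ ensures the $\bar G$-translates of the pyramid disks intersect only in isolated points, so $\bar G$ acts properly and cocompactly on $\bar X$.

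I will assign each triangle an angle $s$ at its central corner and $a$ at each of its two corners on $\bar\gamma$, subject to the nonpositive $2$-cell curvature constraint $s + 2a \leq \pi$. Write $S_c = nk \cdot s$ and $\beta = 2a$. Realize each triangle as a hyperbolic triangle with these angles. The link at $c$ is a $\mathbb{Z}/n\mathbb{Z}$-cycle of total angle $S_c$ on which $\mathbb{Z}/n\mathbb{Z}$ acts freely; direct calculation gives $\standardcurvature(\langle \bar g\rangle, \link(c)) = (2\pi - S_c)/n$, which is $\leq \bar\alpha$ when $S_c \geq 2\pi + n|\bar\alpha|$. Granted the sectional curvature bound below, all links are $CAT(1)$, so $\bar X$ is $CAT(0)$ by Cartan--Hadamard.

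The core of the argument is the sectional curvature condition at a boundary vertex $v$ on $\bar\gamma$. The link $\link(v, \bar X)$ is obtained from $\link(v, X)$ by adjoining a new vertex $[c,v]$ of valence two connected to the two vertices $u_1, u_2$ of $\link(v, X)$ corresponding to the $1$-cells of $\gamma$ at $v$ by edges $e_1, e_2$ of total angle $\beta$. Since $[c,v]$ has valence two, every regular section $\Delta^+$ using any of the new features must include all three. Setting $\Delta_{\mathrm{old}} := \Delta^+ \setminus \{e_1, e_2, [c,v]\}$, a direct Euler-characteristic computation yields the key identity
\[
\standardcurvature(\Delta^+) \;=\; \standardcurvature(\Delta_{\mathrm{old}}) \;+\; \pi - \beta.
\]
When $\Delta_{\mathrm{old}}$ is spurless (hence a regular section of $\link(v, X)$), this gives $\standardcurvature(\Delta^+) \leq \alpha + \pi - \beta$, bounded by $\bar\alpha$ provided $\beta \geq \pi - (|\alpha| - |\bar\alpha|)$. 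Combining this lower bound on $\beta$ with $\beta \leq \pi - S_c/(nk)$ and $S_c \geq 2\pi + n|\bar\alpha|$ yields $|\bar\alpha| \leq (nk|\alpha| - 2\pi)/(n(k+1))$, which admits a strictly positive solution whenever $n \geq N$ with $N$ of order $2\pi/(k|\alpha|)$.

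The main obstacle is the remaining case in which $\Delta_{\mathrm{old}}$ has spurs. Iteratively pruning valence-one vertices and their edges increases $\standardcurvature$ by the angle of each removed edge, so the pruned core $\tilde\Delta$ controls $\standardcurvature(\Delta_{\mathrm{old}})$ from above; if $\tilde\Delta$ is again a regular section of $\link(v, X)$ the bound above applies, but otherwise $\tilde\Delta$ collapses and $\Delta^+$ reduces to a single embedded cycle through $e_1, [c,v], e_2$ closed off by an angularly-minimal path $P \subset \link(v, X)$ from $u_1$ to $u_2$, giving $\standardcurvature(\Delta^+) = 2\pi - \beta - \sum_P \mangle$. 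Here the $CAT(0)$ link inequality $\sum_P \mangle \geq \pi$ must be upgraded to a uniform strict inequality $\sum_P \mangle \geq \pi + \delta$ for some $\delta > 0$ depending only on $\alpha$. This is where the hypothesis $\alpha < 0$ (which forces every embedded cycle in $\link(v, X)$ to have angular length at least $2\pi + |\alpha|$, so that any second disjoint path from $u_1$ to $u_2$ has angle at least $\pi + |\alpha|$) is combined with the discreteness hypothesis on $\gamma$ (which rules out a second axis through $v$ yielding a short detour in the link) to secure the uniform gap $\delta > 0$.
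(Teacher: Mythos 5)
Your construction (equivariant Dehn filling with a cone point fixed by $\langle\bar g\rangle$) is essentially the same one the paper uses, and your identity $\standardcurvature(\Delta^+)=\standardcurvature(\Delta_{\mathrm{old}})+\pi-\beta$ is correct. But the final step, which you rightly flag as the main obstacle, has a genuine gap that the paper closes with a perturbation argument you have omitted.

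You need every path $P$ in $\link(v,X)$ from $u_1$ to $u_2$ to have angular length at least $\pi+\delta$ for a uniform $\delta>0$. This is false. In a $CAT(0)$ complex the two directions of a geodesic $\gamma$ at a $0$-cell $v$ are at angular distance \emph{exactly} $\pi$ --- this is precisely what makes $\gamma$ locally geodesic --- and negative sectional curvature of $X$ does not push that distance above $\pi$. Consequently there is a regular section $\Delta^+=e_1\,[c,v]\,e_2\,P$ with $\sum_P\mangle=\pi$, and $\standardcurvature(\Delta^+)=\pi-\beta>0$, because the $2$-cell constraint $s+2a\leq\pi$ with $s>0$ forces $\beta=2a<\pi$. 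This same cycle has angular length $\beta+\pi<2\pi$, so $\link(v,\bar X)$ fails $CAT(1)$ and $\bar X$ is not even $CAT(0)$. Your appeals to $\alpha<0$ (embedded cycles in $\link(v,X)$ having angular length $\geq 2\pi+|\alpha|$) and to the discreteness hypothesis bound only \emph{other} $u_1$--$u_2$ paths, not the minimal one; the discreteness hypothesis is in fact used elsewhere, namely to keep distinct translates of the attached disk from sharing $1$-cells so that links of $\bar X$ stabilize once $n$ is large.

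The missing idea is to perturb $X$ before attaching anything: slightly decrease all angles so $X$ becomes $CAT(-\epsilon)$ (and subdivide so that $\gamma$ lies in the $1$-skeleton, an assumption you make implicitly), then slightly increase angles so that at every vertex along $\gamma$ the angular distance from $u_1$ to $u_2$ strictly exceeds $\pi$ by a uniform margin while $2$-cell curvatures remain nonpositive. Only then does attaching corners of angle close to but below $\pi$ --- $(n-2)\pi/n$ at the cone vertices, and slightly less than $\pi$ at internal vertices of $\gamma$ --- produce links that are $CAT(1)$ and a complex of strictly negative sectional curvature; this is exactly the content of the paper's lemma on adding a single edge to an angled graph. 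With the perturbation step reinstated, your computation at the cone point $c$ and your constraint analysis go through, but without it the construction fails at every vertex of $\gamma$.
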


\begin{lem}\label{lem:adding-edge}
Let $\bar \Gamma$ be a graph, let $e$ be an edge, and let $\Gamma$ equal $\bar \Gamma - e$. Suppose that $\Gamma$ is a angled-graph with non-negative angles and sectional curvature $\leq \alpha \leq 0$, and suppose the angle distance $\measuredangle (\iota e, \tau e )$ in $\Gamma$ is $\geq \pi$.  Then $\bar \Gamma$ has sectional curvature $\leq 0$ by assigning $\measuredangle (e) =\pi$.
Suppose that $\Gamma$ has sectional curvature $\leq \alpha <0$,  $\measuredangle (\iota e, \tau e ) =\theta> \pi$, and $\measuredangle (e)>\pi+\alpha$. Then $\bar \Gamma$ has sectional curvature $< 0$.
\end{lem}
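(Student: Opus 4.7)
The argument is a curvature computation built on the identity
\[
\standardcurvature(\bar\Delta) \;=\; \standardcurvature(\bar\Delta - e) + \pi - \mangle(e),
\]
valid whenever $\bar\Delta \subseteq \bar\Gamma$ is a subgraph containing $e$; this follows from $\chi(\bar\Delta) = \chi(\bar\Delta - e) - 1$ (same vertex set, one fewer edge) together with the definition of $\standardcurvature$. Fix an arbitrary regular section $\bar\Delta$ of $\bar\Gamma$. If $e \not\subseteq \bar\Delta$, then $\bar\Delta$ is already a regular section of $\Gamma$ and the hypothesis gives $\standardcurvature(\bar\Delta) \leq \alpha \leq 0$, and we are done. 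Henceforth assume $e \subseteq \bar\Delta$, write $v_1 = \iota e$, $v_2 = \tau e$, and $\Delta = \bar\Delta - e$; the angular-distance hypothesis excludes the degenerate case $v_1 = v_2$. Spurlessness of $\bar\Delta$ forces every vertex of $\Delta$ to have valence $\geq 2$ except possibly $v_1, v_2$, which have valence $\geq 1$.

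The analysis splits into three cases based on the shape of $\Delta$. \emph{(A) $\Delta$ is connected and spurless.} Then $\Delta$ is a regular section of $\Gamma$ (nonempty and edge-bearing, since $v_1 \neq v_2$ and each keeps a $\bar\Delta$-edge distinct from $e$), so $\standardcurvature(\Delta) \leq \alpha$ and the identity gives $\standardcurvature(\bar\Delta) \leq \alpha + \pi - \mangle(e)$. \emph{(B) $\Delta$ is disconnected.} Then $e$ is a bridge in $\bar\Delta$ and $\Delta = \Delta_1 \sqcup \Delta_2$ with $v_i \in \Delta_i$. Iterated spur removal in $\Delta_i$ can only peel off a pendant path emanating from $v_i$ (because every other vertex has valence $\geq 2$ in $\bar\Delta$, hence in $\Delta_i$), and terminates at a vertex of $\Delta_i$-valence $\geq 3$; spurlessness of $\bar\Delta$ forbids $\Delta_i = \{v_i\}$, so the reduction $\Delta_i'$ is a regular section of $\Gamma$ with $\standardcurvature(\Delta_i') \leq \alpha$, and non-negativity of angles yields $\standardcurvature(\Delta_i) \leq \standardcurvature(\Delta_i')$. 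The disjoint-union formula $\standardcurvature(\Delta) = \standardcurvature(\Delta_1) + \standardcurvature(\Delta_2) - 2\pi$ then gives $\standardcurvature(\bar\Delta) \leq 2\alpha - \pi - \mangle(e) < 0$. \emph{(C) $\Delta$ is connected with a spur.} Let $\Delta'$ be the spurless reduction. If $\Delta'$ retains an edge, the analysis of (A) applies to $\Delta'$ (with $\standardcurvature(\Delta) \leq \standardcurvature(\Delta')$ by non-negativity of angles on the removed pendant paths). Otherwise $\Delta$ is a tree whose only leaves may lie in $\{v_1, v_2\}$, forcing $\Delta$ to be a simple path from $v_1$ to $v_2$ and $\bar\Delta$ to be an embedded cycle with $\chi = 0$. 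Then
\[
\standardcurvature(\bar\Delta) \;=\; 2\pi - \mangle(e) - \sum_{e' \in \Delta} \mangle(e') \;\leq\; 2\pi - \mangle(e) - \mangle(\iota e, \tau e),
\]
using the angular-distance hypothesis applied to the path $\Delta$ in $\Gamma$.

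The first claim follows at once by substituting $\mangle(e) = \pi$ and $\mangle(\iota e, \tau e) \geq \pi$ into the inequalities from each case. The second claim is obtained analogously from the strict bounds $\mangle(e) > \pi + \alpha$ in cases (A), (B), and the first branch of (C), together with $\theta > \pi$ in the cycle subcase of (C). \emph{Main obstacle.} The principal technical point is the spur-chain bookkeeping in cases (B) and (C): one must verify that in a spurless $\bar\Delta$, iterated spur removal in $\Delta$ can only run along pendant paths emerging from $v_1$ or $v_2$ and terminates at a vertex of $\Delta$-valence $\geq 3$, so that the reduction $\Delta'$ is a genuine regular section of $\Gamma$ whenever it retains an edge. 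The cycle subcase of (C) is the one place where the bound is driven by the angular-distance hypothesis rather than the sectional curvature bound on $\Gamma$, and is also where the sharpness of the constants in the second statement becomes delicate.
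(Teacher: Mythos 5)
Your proof takes essentially the same route as the paper: the identity $\standardcurvature(\bar\Delta) = \standardcurvature(\Delta) + \pi - \mangle(e)$, followed by a case split according to whether the spur-reduction of $\Delta = \bar\Delta - e$ leaves a regular section of $\Gamma$ or collapses to a point, with the angular-distance hypothesis used only in the collapsed (cycle) case. In one place you are actually more careful than the paper: the paper's phrase ``removing spurs gives a section'' tacitly assumes $\Delta$ is connected, but $e$ can be a bridge in a spurless $\bar\Delta$ (e.g.\ two circles joined by $e$); your explicit Case~(B), using the disjoint-union identity $\standardcurvature(\Delta_1\sqcup\Delta_2)=\standardcurvature(\Delta_1)+\standardcurvature(\Delta_2)-2\pi$, closes this and in fact yields the stronger bound $\standardcurvature(\bar\Delta)\leq 2\alpha-\pi-\mangle(e)$.

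There is, however, a genuine gap in the cycle subcase of the second claim, which you flagged as ``delicate'' but did not close. There you obtain $\standardcurvature(\bar\Delta) \leq 2\pi - \mangle(e) - \theta$, and assert that $\theta>\pi$ together with $\mangle(e)>\pi+\alpha$ makes this negative. But since $\alpha<0$, those hypotheses only yield $\standardcurvature(\bar\Delta) < 2\pi - (\pi+\alpha) - \pi = -\alpha$, which is \emph{positive}. What is actually needed is $\mangle(e)+\theta>2\pi$, i.e.\ $\mangle(e)>2\pi-\theta$, and this does not follow from $\mangle(e)>\pi+\alpha$ unless $\theta\geq \pi-\alpha$. (Concretely: take $\Gamma$ a path $v_1\!-\!w\!-\!v_2$ with both angles $0.6\pi$, so $\theta=1.2\pi>\pi$ and sectional curvature $\leq\alpha$ vacuously for any $\alpha$; with $\alpha=-0.5\pi$ and $\mangle(e)=0.6\pi>\pi+\alpha$, the $3$-cycle $\bar\Gamma$ has curvature $2\pi-1.8\pi=0.2\pi>0$.) The paper's own proof has the same soft spot: it deduces strictness in the cycle case from $\theta>\pi$ while implicitly keeping $\mangle(e)=\pi$ from the first part, even though the application in Theorem~\ref{thm:quotients} assigns $\mangle(e)=(n-2)\pi/n<\pi$ and really relies on $\mangle(e)>2\pi-\theta$ (which holds there for $n$ large). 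You should either add $\mangle(e)>2\pi-\theta$ as an explicit hypothesis for the cycle case, or restrict to $\mangle(e)\geq\pi$, rather than leaving the gap implicit.
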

\begin{proof}
Let $\bar \Lambda$ be a connected, spurless and not edgeless subgraph of $\bar \Gamma$, and let $\Lambda$ be $\bar \Lambda \cap \Gamma$.  If $\Lambda = \bar \Lambda$ then $\standardcurvature (\bar \Lambda) =   \standardcurvature (\Lambda) \leq \alpha$ by hypothesis.   If $e$ is an edge of $\bar \Lambda$ then $\standardcurvature (\bar \Lambda) =    \standardcurvature (\Lambda) + \pi - \measuredangle (e)$.  Therefore  $\standardcurvature (\bar \Lambda)\leq \alpha + \pi - \measuredangle (e) \leq 0$ if $\Lambda$ was not a tree because removing spurs gives a section. Observe that the last inequality is strict if $\measuredangle (e)>\pi+\alpha$. Otherwise removing some spurs gives rise to a subdivided interval and hence
$\standardcurvature (\Lambda) \leq \pi - \measuredangle (\iota e, \tau e )$. In this case, $\standardcurvature (\bar \Lambda)\leq 0$ with strict inequality if $\measuredangle (\iota e, \tau e )>\pi$.
\end{proof}

\begin{proof}[Proof of Theorem~\ref{thm:quotients}]
Slightly decreasing all the angles yields a  $CAT(-\epsilon)$ structure on $X$. Let $\widehat X$ be the quotient of $X$ by $\nclose {g^n}$. 

Let $X'$ be a $G$-invariant subdivision of $X$ such that $\gamma$ lies in the $1$-skeleton of $X'$. By slightly increasing the curvature of all $2$-cells (increasing all the angles), $X'$ has negative sectional curvature at all new $0$-cells corresponding to intersections of $\gamma$ with $1$-cells of $X$ and translates of $\gamma$.

Let $\sigma_n$ be the subpath of $\gamma$ from $p$ to $g^np$. Form $\bar X$ from $\widehat X$ by attaching a $2$-cell $\bar h R$ along the cycle $\bar h \sigma^n$ of $\widehat X$;  attach a $2$-cell for each left coset  $\bar h \langle \bar g \rangle$ in $\bar G$. Extend the $\bar G$-action on $\hat X$ to $\bar X$ by letting each $\langle \bar g^{\bar h}$ act with a fixed point at the center of $\bar h R$.  Regard $R$ as an Euclidean $n$-gon whose $i$-th side is the translate of $\sigma_1$ by $\bar g^i$. Now we claim that for sufficiently large $n$, the complex $\bar X$ has negative sectional curvature.

By the assumption on $p$,  the links of vertices of $\bar X$ are independent of $n$.  By making all angles of $X$ slightly larger, we can assume that the angle distance between the initial and terminal vertices of corners along $\ell$ are $>\pi$.  By Lemma~\ref{lem:adding-edge}, for sufficiently large $n$, we can assign an angle of $(n-2)\pi/n$ to the corners of $R$ and its translates.  We assign an angle of $\pi$ at the other corners along the interior of $\sigma_1$. 

By subdividing $R$ into $n$ $2$-cells using its barycenter, and making its corners slightly smaller, we can assume that the barycenter has negative curvature and that $G$ acts without inversions on $\bar X$.
\end{proof}

\bibliographystyle{plain}
\bibliography{xbib}

\begin{thebibliography}{10}

\bibitem{Br92}
Martin~R. Bridson.
\newblock Geodesics and curvature in metric simplicial complexes.
\newblock In {\em Group theory from a geometrical viewpoint ({T}rieste, 1990)},
  pages 373--463. World Sci. Publ., River Edge, NJ, 1991.

\bibitem{BrHa99}
Martin~R. Bridson and Andr{\'e} Haefliger.
\newblock {\em Metric spaces of non-positive curvature}, volume 319 of {\em
  Grundlehren der Mathematischen Wissenschaften [Fundamental Principles of
  Mathematical Sciences]}.
\newblock Springer-Verlag, Berlin, 1999.

\bibitem{FH99}
Mark Feighn and Michael Handel.
\newblock Mapping tori of free group automorphisms are coherent.
\newblock {\em Ann. of Math. (2)}, 149(3):1061--1077, 1999.

\bibitem{Ge87}
S.~M. Gersten.
\newblock Reducible diagrams and equations over groups.
\newblock In {\em Essays in group theory}, volume~8 of {\em Math. Sci. Res.
  Inst. Publ.}, pages 15--73. Springer, New York, 1987.

\bibitem{GrHypGps}
M.~Gromov.
\newblock Hyperbolic groups.
\newblock In {\em Essays in group theory}, volume~8 of {\em Math. Sci. Res.
  Inst. Publ.}, pages 75--263. Springer, New York, 1987.

\bibitem{Ho81}
James Howie.
\newblock On pairs of {$2$}-complexes and systems of equations over groups.
\newblock {\em J. Reine Angew. Math.}, 324:165--174, 1981.

\bibitem{Lu02}
Wolfgang L{\"u}ck.
\newblock {\em {$L^2$}-invariants: theory and applications to geometry and
  {$K$}-theory}, volume~44 of {\em Ergebnisse der Mathematik und ihrer
  Grenzgebiete. 3. Folge. A Series of Modern Surveys in Mathematics [Results in
  Mathematics and Related Areas. 3rd Series. A Series of Modern Surveys in
  Mathematics]}.
\newblock Springer-Verlag, Berlin, 2002.

\bibitem{M64}
A.~M. Macbeath.
\newblock Groups of homeomorphisms of a simply connected space.
\newblock {\em Ann. of Math. (2)}, 79:473--488, 1964.

\bibitem{MaWi10}
Eduardo Mart{\'{\i}}nez-Pedroza and Daniel~T. Wise.
\newblock Local quasiconvexity of groups acting on small cancellation
  complexes.
\newblock {\em J. Pure Appl. Algebra}, 215(10):2396--2405, 2011.

\bibitem{mt:hmkg}
Katsuhiko Matsuzaki and Masahiko Taniguchi.
\newblock {\em Hyperbolic manifolds and {K}leinian groups}.
\newblock Oxford Mathematical Monographs. The Clarendon Press Oxford University
  Press, New York, 1998.
\newblock Oxford Science Publications.

\bibitem{McWi-coherence}
J.~P. McCammond and Daniel~T. Wise.
\newblock Coherence, local quasiconvexity, and the perimeter of 2-complexes.
\newblock {\em Geom. Funct. Anal.}, 15(4):859--927, 2005.

\bibitem{McWi-fans}
Jonathan~P. McCammond and Daniel~T. Wise.
\newblock Fans and ladders in small cancellation theory.
\newblock {\em Proc. London Math. Soc. (3)}, 84(3):599--644, 2002.

\bibitem{morgan:uniform}
John~W. Morgan.
\newblock On {T}hurston's uniformization theorem for three-dimensional
  manifolds.
\newblock In {\em The {S}mith conjecture ({N}ew {Y}ork, 1979)}, volume 112 of
  {\em Pure Appl. Math.}, pages 37--125. Academic Press, Orlando, FL, 1984.

\bibitem{Sc73}
G.~P. Scott.
\newblock Finitely generated {$3$}-manifold groups are finitely presented.
\newblock {\em J. London Math. Soc. (2)}, 6:437--440, 1973.

\bibitem{Wi04}
D.~T. Wise.
\newblock Sectional curvature, compact cores, and local quasiconvexity.
\newblock {\em Geom. Funct. Anal.}, 14(2):433--468, 2004.

\bibitem{wisepqr}
Daniel~T. Wise.
\newblock Nonpositive sectional curvature for {$(p,q,r)$}-complexes.
\newblock {\em Proc. Amer. Math. Soc.}, 136(1):41--48 (electronic), 2008.

\end{thebibliography}

\end{document}